 \renewcommand*{\backref}[1]{}
 \renewcommand*{\backrefalt}[4]{({%
     \ifcase #1 Not cited.%
           \or On p.~#2%
           \else On pp.~#2%
     \fi%
     })}
\newcommand{\C}{\mathscr{C}}
\newcommand{\D}{\mathscr{D}}
\newcommand{\E}{\mathscr{E}}
\renewcommand{\H}{\mathscr{H}}
\newcommand{\I}{\mathscr{I}}
\newcommand{\bI}{\mathbb{I}}
\newcommand{\s}{\mathscr{S}}
\newcommand{\T}{\mathscr{T}}
\renewcommand{\ss}{s\mathscr{S}}
\newcommand{\M}{\mathscr{M}}
\newcommand{\Hom}{\mathrm{Hom}}
\newcommand{\Map}{\mathrm{Map}}
\newcommand{\Nat}{\mathrm{Nat}}
\newcommand{\Fun}{\mathrm{Fun}}
\newcommand{\map}{\mathrm{map}}
\newcommand{\comma}{,}
\newcommand{\set}{\mathscr{S}\mathrm{et}}
\newcommand{\cat}{\C\mathrm{at}}
\newcommand{\id}{\mathrm{id}}
\newcommand{\sSet}{\mathrm{s}\set}
\newcommand{\Diag}{\mathrm{Diag}}
\newcommand{\colim}{\mathrm{colim}}
\newcommand{\Ho}{\mathrm{Ho}}
\newcommand{\mul}{\mathrm{mul}}
\renewcommand{\exp}[2]{\mathrm{exp}(#1,#2)}
\newcommand{\rotatebot}{\rotatebox{270}{$\bot$}}
\newcommand{\ds}{\displaystyle}
\newcommand{\sint}{s\hspace{-0.04in}\int}
\newcommand{\sT}{s\mathscr{T}}
\newcommand{\bT}{\mathbb{T}}
\newcommand{\sH}{s\mathscr{H}}
\newcommand{\sbT}{s\bT}
\newcommand{\sbI}{s\bI}
\newcommand{\reb}{[}
\newcommand{\leb}{]}
\newcommand{\ordered}[1]{< #1 >}
\newcommand{\Yon}{\mathrm{Yon}}
\newcommand{\comp}{\mathrm{comp}}
\newcommand{\setref}[1]{\cref{Subsec Simplicial Sets}(\ref{#1})}
\newcommand{\spaceref}[1]{\cref{Subsec Simplicial Spaces}(\ref{#1})}
\newcommand{\adjun}[4]{
\begin{tikzcd}[row sep=0.5in, column sep=0.5in]
 #1  \arrow[r, shift left=1.8, "#3"] \pgfmatrixnextcell
 #2 \arrow[l, shift left=1.6, "#4", "\bot"'] 
\end{tikzcd}
}
\newcommand{\comsq}[8]{
  \begin{tikzcd}[row sep=0.5in, column sep=0.5in]
    #1 \arrow[r, "#5"] \arrow[d, "#6"']
    \pgfmatrixnextcell #2 \arrow[d, "#7"] \\
    #3 \arrow[r, "#8"]
    \pgfmatrixnextcell #4
  \end{tikzcd}
}
\newcommand{\pbsq}[8]{
  \begin{tikzcd}[row sep=0.5in, column sep=0.5in]
    #1 \arrow[r, "#5"] \arrow[d, "#6"'] \arrow[dr, phantom, "\ulcorner", very near start]
    \pgfmatrixnextcell #2 \arrow[d, "#7"] \\
    #3 \arrow[r, "#8"']
    \pgfmatrixnextcell #4
  \end{tikzcd}
}
\newcommand{\liftsq}[8]{
  \begin{tikzcd}[row sep=0.5in, column sep=0.5in]
    #1 \arrow[r, "#5"] \arrow[d, "#6"']
    \pgfmatrixnextcell #2 \arrow[d, "#7"] \\
    #3 \arrow[r, "#8"] \arrow[ur, dashed]
    \pgfmatrixnextcell #4
  \end{tikzcd}
}
\newtheorem{theone}[equation]{Theorem}
\newtheorem{lemone}[equation]{Lemma}
\newtheorem{propone}[equation]{Proposition}
\newtheorem{corone}[equation]{Corollary}
\theoremstyle{definition}
\newtheorem{defone}[equation]{Definition}
\newtheorem{exone}[equation]{Example}
\theoremstyle{remark}
\newtheorem{remone}[equation]{Remark}
\newtheorem{notone}[equation]{Notation}
\newtheoremstyle{TheoremNum}
{}{}              
{\itshape}                      
{}                              
{\bfseries}                     
{.}                             
{ }                             
{\thmname{#1}\thmnote{ \bfseries #3}}
\theoremstyle{TheoremNum}
\newtheorem{thmn}{Theorem}
\numberwithin{equation}{section}
\title{Yoneda Lemma for Simplicial Spaces}
\author{Nima Rasekh}
\address{{\'E}cole Polytechnique F{\'e}d{\'e}rale de Lausanne, SV BMI UPHESS, Station 8, CH-1015 Lausanne, Switzerland}
\email{nima.rasekh@epfl.ch}
\date{\today}
\begin{document}

\begin{abstract}
We study the Yoneda lemma for arbitrary simplicial spaces. 
We do that by introducing {\it left fibrations} of simplicial spaces and and studying its associated model structure, 
the {\it covariant model structure}.
\par 
In particular, we prove a {\it recognition principle} for covariant equivalences over an arbitrary simplicial space 
and {\it invariance} of the covariant model structure with respect to complete Segal space equivalences.
\end{abstract}

\maketitle
 \addtocontents{toc}{\protect\setcounter{tocdepth}{1}}

\tableofcontents

 \section{Introduction}\label{Sec Introduction}
 
 \subsection{Fibrations and the Yoneda Lemma}\label{subsec:fibrations and the yoneda lemma}
 The Yoneda lemma is a fundamental result in classical category theory. It allows us to embed every category into a larger category, 
 that shares many pleasant features with the category of sets. As a concrete example, we can use the Yoneda lemma and our  
 understanding of colimits in the category of sets, to give a precise description of colimits in an arbitrary category
 \cite{maclane1998categories}, \cite{riehl2014categoricalhomotopytheory}. 
 \par 
 In recent decades there has been an effort to generalize the notion of a category to an {\it $(\infty,1)$-category}, 
 which satisfies the conditions of a category only up to coherent homotopies and is thus better suited to study 
 objects that arise naturally in homotopy theory. This first happened via several models: 
 {\it quasi-categories} \cite{boardmanvogt1973qcats}, 
 {\it complete Segal spaces} \cite{rezk2001css}, 
 {\it Kan enriched categories} \cite{bergner2007bergnermodelcat}, 
 and many other other models \cite{bergner2010survey}, \cite{bergner2018book}. 
 Later Riehl and Verity developed a model-independent approach, which encompasses most of these definitions: {\it $\infty$-cosmoi}
 \cite{riehlverity2017inftycosmos}.
 \par 
 Given the important role of the Yoneda lemma for classical categories, we
 want to study the Yoneda lemma for $(\infty,1)$-categories. 
 In fact, the Yoneda lemma and its associated theory of {\it left fibrations} has already been studied in several models of $\infty$-categories:  
 \begin{itemize}
  \item {\it Quasi-categories:} Studied first by Joyal \cite{joyal2008notes, joyal2008theory} and then Lurie \cite{lurie2009htt}. 
  Has since been reworked using different methods by
  Heuts and Moerdijk \cite{heutsmoerdijk2015leftfibrationi, heutsmoerdijk2016leftfibrationii}, Stevenson \cite{stevenson2017covariant}, 
  Cisinski \cite{cisinski2019highercategories}
  and Nguyen \cite{nguyen2019covariant}.
  \item {\it Kan enriched categories:} 
  Studied primarily in the context of the Yoneda lemma for enriched categories \cite{kelly1982enriched}, \cite{riehl2014categoricalhomotopytheory}.
  \item {\it $\infty$-Cosmoi:} Introduced and proven by Riehl and Verity \cite{riehlverity2017inftycosmos}.
  \item {\it Segal spaces:} Studied by de Brito \cite{debrito2018leftfibration} and Kazhdan, Varshavsky \cite{kazhdanvarshvsky2014yoneda}.
 \end{itemize}
 The study of Yoneda lemma for general simplicial spaces has received far less attention. 
 \par 
 The goal of this work is to focus on the Yoneda lemma and its associated theory of fibrations in the context of 
 complete Segal spaces and, more generally, arbitrary simplicial spaces. 
 
 \subsection{Why Simplicial Spaces?}\label{subsec:why simplicial spaces?}
  Given that most results here appeared in one form or another in the language of quasi-categories or $\infty$-cosmoi why 
  present the material in the language of simplicial spaces?
  The key observation is that most results about left fibrations have only been proving using the model of quasi-categories and hence rely on 
  using combinatorial techniques with simplicial sets. On the other hand the proofs with simplicial spaces rely on homotopy invariant proofs 
  with Kan complexes. 
  \par 
  Beside the theoretical satisfaction a homotopy theorist might derive from proving results using homotopy invariant techniques, 
  there are also concrete mathematical interests:
    
    \medskip 
    
   {\bf Synthetic $\infty$-Category Theory:}
    {\it Homotopy type theory} is a new foundation for mathematics that is by definition homotopy invariant and thus in many ways better suited for 
    homotopical constructions  \cite{hottbook2013}. 
    One long term goal is to use homotopy type theory to introduce a {\it synthetic} notion of $\infty$-categories. 
    A first step in this regard has been taken by Riehl and Shulman \cite{riehlshulman2017rezktypes} who introduced a notion of a {\it Rezk type} as a 
    way to define $(\infty,1)$-categories in homotopy type theory. The notion of a Rezk type is motivated by complete Segal spaces and so 
    in particular their approach to fibrations and the Yoneda lemma corresponds to fibrations of complete Segal spaces rather than quasi-categories. 

   \medskip
   
   {\bf Completeness:}
   One defining property of $(\infty,1)$-categories is {\it completeness}, first introduced by Charles Rezk \cite{rezk2001css} 
   and then rediscovered by Vladimir Voevodsky in the context of homotopy type theory, where it is introduced as the 
   {\it univalence axiom} \cite{hottbook2013}. 
   \par 
   From a foundational perspective we want to determine which results in $(\infty,1)$-category depend on the univalence axiom (i.e. require completeness) 
   and which ones hold in a more general foundation. However, we cannot directly use quasi-categories to address this problem 
   as quasi-categories are automatically complete. Rather it would require us to use technical tools such as 
   {\it flagged $\infty$-categories} \cite{ayalafrancis2018flagged}.
   On the other hand complete Segal spaces can be easily generalized to Segal spaces and so give us a direct computational way to 
   study the necessity of completeness: We simply have to check whether a result only holds for complete Segal spaces or can be generalized to 
   Segal spaces.
   
   \medskip 
   
   {\bf Fibrations of $(\infty,n)$-Categories:}
   The same way that $1$-categories have been generalized to $(\infty,1)$-categories, strict $n$-categories have been generalized to 
   $(\infty,n)$-categories. Similar to the $(\infty,1)$-categorical case there is now a long list of models
   \cite{bergner2020surveyn}, however, unlike the 
   $(\infty,1)$-categorical case many important questions about $(\infty,n)$-categories have remained unanswered. 
   \par 
   First of all it is not yet proven that the common models of $(\infty,n)$-category that appear in the literature are actually equivalent.
   For example, it is known that {\it $\Theta_n$-spaces} \cite{rezk2010thetanspaces} are equivalent to {\it $n$-fold complete Segal spaces} 
   \cite{barwick2005nfoldsegalspaces} as proven by Bergner and Rezk \cite{bergnerrezk2013comparisoni,bergnerrezk2020comparisonii}. 
   However, it is not known whether they are equivalent to {\it complicial sets} \cite{verity2008complicial} and both of those are not known 
   to be equivalent to {\it comical sets} \cite{campionkapulkinmaehara2020comical}. These are just some of the models that appear in the 
   literature and clearly illustrate the challenges that lie ahead.
   \par 
   Moreover, the theory of fibrations for none of these models has been completely developed. There are some results for 
   one model of $(\infty,2)$-categories, namely $2$-complicial sets, by Lurie \cite{lurie2009goodwillie} (there called {\it scaled simplicial sets}). 
   However, there are no results yet for $\Theta_n$-spaces and $n$-fold complete Segal spaces, both of which are natural generalizations of 
   complete Segal spaces. 
   \par 
   On the other hand each model has its own applications in various branches of mathematics. For example, $n$-fold complete Segal spaces have been the 
   primary model in the study of topological field theories and the cobordism hypothesis \cite{lurie2009cobordism},\cite{calaquescheimbauer2019cobordism} 
   and thus merit a theory of fibrations. However, given the difficulties we currently face comparing different models and the fact that the theory 
   of fibrations has not been developed for any of the models, transferring 
   results from one model to another (the way we could for $(\infty,1)$-categories)  
   is currently not possible. It is thus imperative to study fibration of $n$-fold complete Segal spaces in its own right.
   \par 
   The fact that $n$-fold complete Segal spaces are a direct generalization of complete Segal spaces thus means that an important first step 
   towards realizing this goal is to study left fibrations of complete Segal spaces. 

   \medskip 
   
   In all three examples, what is important is not just to know that a certain result holds, but rather how to prove the desired results. 
   For example, the study of fibrations of $n$-fold complete Segal spaces is expected to be a direct generalization 
   of the results for simplicial spaces proven here. 
   
   \medskip 
   
   Finally, some of the results here can already be found in work of 
   de Brito \cite{debrito2018leftfibration} and Kazhdan-Varshavsky \cite{kazhdanvarshvsky2014yoneda}. However, the most important results
   are proven at the level of arbitrary simplicial spaces and are thus far more general. 
   Two important examples of such theorems are the recognition principle for covariant equivalences 
   (\cref{the:covar equiv over simp space})
   and the invariance of the covariant model structure under CSS equivalences (\cref{The Covar invariant under CSS equiv}).

 \subsection{Main Results}\label{subsec:main results}
 The paper focuses on the study of {\it left fibrations}. A left fibration is a Reedy fibration of simplicial spaces $p:Y \to X$ 
 such that for all $n \geq 0$ the commutative square 
 \begin{center}
  \begin{tikzcd}[row sep=0.5in, column sep=0.5in]
   Y_n \arrow[r, "p_n"] \arrow[d, "\ordered{0}^*"'] & X_n \arrow[d, "\ordered{0}^*"] \\
   Y_0 \arrow[r, "p_0"] & X_0
  \end{tikzcd}
 \end{center}
 is a homotopy pullback square of spaces (\cref{def:left fibration}). It generalizes the unique lifting condition of a Grothendieck opfibration 
 for categories (\cref{def:discrete Groth fib}). It is well-established that discrete Grothendieck opfibrations model covariant functors 
 valued in sets (\cref{prop:integral adjunctions}) and by analogy we think of left fibrations as a model for covariant functors valued in spaces, 
 which guides our work throughout this paper.
 
 \medskip
 
 Unlike Grothendieck opfibrations, the study of left fibrations requires higher-categorical techniques, which is why 
 we will use the theory of model categories (\cref{Sec Some Facts about Model Categories}).
 Concretely, we will show that for each simplicial space $X$ there is a unique simplicial model structure on the category of simplicial 
 spaces over $X$, denoted $\ss_{/X}$ and called  
 the {\it covariant model structure}, such that the fibrant objects are precisely the left fibrations (\cref{The Covariant Model Structure}).
 
 \medskip
 
 One important model of an $(\infty,1)$-category is a {\it complete Segal space}, which is a simplicial space that satisfies the 
 certain lifting conditions that endows it with the structure of a category (\cref{Def Complete Segal Spaces}).
 As a first step, we thus expect that left fibrations over complete Segal spaces behave analogously to Grothendieck opfibrations over 
 categories. We will in fact take a more general step and study left fibrations over {\it Segal spaces} (\cref{Def Segal Spaces}).
 Using Segal spaces, we get following generalization of the Yoneda lemma
 (here $F(n)$ is the representable simplicial discrete space \spaceref{item:Fn}).
 
 \begin{thmn}[\ref{the:yoneda for Segal spaces}]
  Let $W$ be a Segal space and $x: F(0) \to W$ be an object. Let
  $$W_{x/} = W^{F(1)} \strut^{t} \underset{W}{\times}^{ \{ x \} } F(0) \xrightarrow{ \ t \ } W$$
  be the under-category projection, which is a left fibration (\cref{the:under CSS left fibration}).
  Then the map 
  \begin{center}
   \begin{tikzcd}[row sep=0.5in, column sep=0.5in]
    F(0) \arrow[rr, "\{\id_x\}", "\simeq"'] \arrow[dr, "\{ x \}"] & & W_{x/} \arrow[dl, "t"] \\
    & W &
   \end{tikzcd}
  \end{center}
  is a covariant equivalence over $W$ and so in particular for every left fibration $L \to W$ the induced map 
  $$\{\id_x\}^*: \Map_{/W}(W_{x/},L) \to \Map_{/W}(F(0),L) $$
  is a Kan equivalence (\cref{cor:yoneda lemma Segal spaces hom version}).
 \end{thmn}

 If the Segal space is the nerve of a category, then we can give a very precise relationship between left fibrations and functors valued in spaces.
 
 \begin{thmn}[\ref{the:simplicial groth construction}]
  Let $\C$ be a small category.
   The two simplicially enriched adjunctions  
    \begin{center}
    \begin{tikzcd}[row sep=0.5in, column sep=0.9in]
     \Fun(\C,\s)^{proj} \arrow[r, shift left = 1.8, "\sint_\C"] & 
     (\ss_{/N\C})^{cov} \arrow[l, shift left=1.8, "\sH_\C", "\bot"'] \arrow[r, shift left=1.8, "\sbT_\C"] &
     \Fun(\C,\s)^{proj} \arrow[l, shift left=1.8, "\sbI_\C", "\bot"']
    \end{tikzcd}
   \end{center}
   are Quillen equivalences. Here $\Fun(\C,\s)$ has the projective model structure (\cref{def:projective model structure}) 
   and $\ss_{/N\C}$ has the covariant model structure over $N\C$.
  \end{thmn}
 
 One new and important aspect of this work is that we generalize these results from Segal spaces to arbitrary simplicial spaces. 
 There are two critical steps: the first one is the {\it recognition principle} for covariant equivalences.
 
 \begin{thmn}[\ref{the:condition for covar equiv general rep}] 
 $p:Y \to X$ be a map of simplicial spaces. 
 For every $x: F(0) \to X$,  
 there is a natural zig-zag of diagonal equivalences (\cref{The Diagonal Model Structure})
 $$R_x \underset{X}{\times} Y \xrightarrow{ \ \simeq \ } R_x \underset{X}{\times} \hat{Y} \xleftarrow{ \ \simeq \ } F(0) \underset{X}{\times} \hat{Y}$$
 Here $i: Y \to \hat{Y}$ is a choice of a left fibrant replacement of $Y$ over $X$ and $R_x \to X$ is a contravariant fibrant replacement of 
 $\{  x \} : F(0) \to X$ (\cref{rem:right fibrations}).
\end{thmn}

\begin{thmn}[\ref{the:covar equiv over simp space}]
  Let $g: Y \to Z$ be a map over $X$ and $R_x \to X$ a choice of contravariant fibrant replacement of the map $\{x\}:F(0) \to X$ 
  (\cref{rem:right fibrations}).
  Then $g:Y \to Z$ over $X$ is a covariant equivalence if and only if for every $x : F(0) \to X$
  $$ R_x \underset{X}{\times} Y \to R_x \underset{X}{\times} Z$$
  is a diagonal equivalence (\cref{The Diagonal Model Structure}).
\end{thmn}
 
The second is the {\it invariance property} of the covariant model structure.
 
 \begin{thmn}[\ref{The Covar invariant under CSS equiv}]
   Let $f: X \to Y$ be a CSS equivalence (\cref{The Complete Segal Space Model Structure}). Then the adjunction
   \begin{center}
     \adjun{(\ss_{/X})^{cov}}{(\ss_{/Y})^{cov}}{f_!}{f^*}
   \end{center}
   is a Quillen equivalence. Here both sides have the covariant model structure.
  \end{thmn}
  
  Using the invariance property, we can prove the following:
  
  \begin{thmn}[\ref{The Covariant local of CSS}]
   The covariant model structure is a localization of the CSS model structure on $\ss_{/X}$.
  \end{thmn}
  
  \begin{thmn}[\ref{The Pullback preserves CSS equiv}]
   Base change by left fibrations preserves CSS equivalences.
  \end{thmn}
 
 \subsection{Outline} \label{Subsec Outline}
 In \cref{Sec Another Look at the Yoneda Lemma for Categories} we review the 
 classical Yoneda lemma in \cref{Subsec Tensor Product of Functors and Yoneda Lemma}, the Grothendieck construction in \cref{subsec:functors to fibrations} 
 and fibrational Yoneda lemma 
 for categories in \cref{subsec Yoneda Lemma for Fibered Categories} with an eye towards a generalization to simplicial spaces. 
 
 \medskip
 
 \cref{Sec Basics Conventions} is a review of necessary background concepts: 
 Joyal-Tierney calculus (\cref{subsec:joyal tierney calculus}), 
 spaces (\cref{Subsec Simplicial Sets}), 
 simplicial spaces (\cref{Subsec Simplicial Spaces}),  
 the Reedy model structure (\cref{Subsec Reedy Model Structure}) and 
 complete Segal spaces (\cref{Subsec Complete Segal Spaces}). 
 
 \medskip
 
 In \cref{Sec Left Fibrations and the Covariant Model Structure} we begin the study of left fibration. 
 In \cref{subsec:many faces left fib} we introduce left fibrations and give various alternative characterizations. 
 We then move on in \cref{subsec:covariant model structure} to define a model structure for 
 left fibrations, the {\it covariant model structure} (\cref{The Covariant Model Structure}). Finally, in \cref{subsec:yoneda lemma Segal spaces} 
 we study left fibrations over Segal spaces and in particular prove the {\it Yoneda lemma for Segal spaces} (\cref{the:yoneda for Segal spaces}). 
 
 \medskip
 
 In the next section, \cref{sec:grothendieck construction}, we first take a technical digression in 
 \cref{subsec:grothendieck construction over categories} and focus on the covariant model structure over 
 nerves of categories and in particular prove the {\it Grothendieck construction} in \cref{the:simplicial groth construction}. 
 We then use these new technical results in 
 \cref{Subsec The Yoneda Lemma} to prove the {\it recognition principle for covariant equivalences} (\cref{the:covar equiv over simp space}).
 
 \medskip
 
 In the final section, \cref{Sec Complete Segal Spaces and Covariant Model Structure}, we study the relation between left fibrations and 
 complete Segal spaces. In particular, in \cref{Subsec Invariance of Covariant Model Structure under CSS Equivalences} we prove the 
 {\it invariance of the covariant model structure} (\cref{The Covar invariant under CSS equiv})
 and several important implications. Finally, in \cref{Subsec Colimits Cofinality and Quillens Theorem A}  
 we apply these result to the study of colimits in Segal spaces. 
 
 \medskip
 
 There are two appendices. In \cref{Sec Some Facts about Model Categories} we review some key lemmas about model categories. 
 In \cref{Sec Comparison with Quasi-Categories} we prove that the covariant model structure for 
 simplicial spaces is Quillen equivalent to the covariant model structure for simplicial sets studied in \cite{lurie2009htt}.
 
 \subsection{Background} \label{Subsec Background}
 The main language here is the language of model categories and complete Segal spaces. So, we assume familiarity with both throughout.
 Only a few results are explicitly stated here. For a basic introduction to the theory of model categories see
 \cite{dwyersspalinski1995modelcat} or \cite{hovey1999modelcategories}. For an introduction to complete Segal spaces see the original source 
 \cite{rezk2001css}.

 \subsection{Notation} \label{subsec:notation}
 We mostly follow the notation as introduced in \cite{rezk2001css} and will be reviewed in \cref{Sec Basics Conventions}. 
 We use categories with different enrichments and use the following notation to distinguish between them.
 Fix a category $\C$ and two objects $X,Y$.
 \begin{itemize}
  \item We denote the {\it set} of maps between them by $\Hom_\C(X,Y)$. For a given object $X$ and maps 
  $g: Y \to X, Z \to X$, we will denote the set of maps $\Hom_{\C_{/X}}(Y,Z)$ by $\Hom_{/X}(Y,Z)$.
  \item There is one exception to the previous rule. If $\C$ is a category of functors, then we denote the 
  {\it set of natural transformations} from $F$ to $G$ by $\Nat(F,G)$, following conventional notation.
  \item If $\C$ is enriched over the category of simplicial sets, we denote the {\it mapping simplicial set} by $\Map_\C(X,Y)$ or, 
  if $\C$ is clear from the context, by $\Map(X,Y)$. Similar to the last one we will, instead of 
  $\Map_{\C_{/X}}(Y,Z)$, use $\Map_{/X}(Y,Z)$.
  \item If $\C$ is Cartesian closed, we denote the internal mapping object by $Y^X$.
  \item There is one exception to the previous rule. For two categories $\C$, $\D$, we denote the {\it category} of functors by 
  $\Fun(\C,\D)$, following conventional notation.
  \item If $W$ is a Segal space, then for two objects $x,y$ in $W$ there is a mapping space, which we denote by $\map_W(x,y)$  
  (\cref{def:mapping space Segal space}).
 \end{itemize}
 For a functor between small categories $F: \C \to \D$ and bicomplete category $\E$ we use the following notation for the induced 
 functors at the level of presheaf categories:
 \begin{center}
  \begin{tikzcd}[row sep=0.5in, column sep=0.5in]
   \Fun(\C,\E) \arrow[r, "F_!", shift left=1.4, bend left =30] \arrow[r, "F_*"', shift right=1.4, bend right=30]& \Fun(\D,\E) \arrow[l, "F^*"'] 
  \end{tikzcd}
 \end{center}
 Here $F^*$ is defined by precomposition, $F_!$ is the left Kan extension and $F^*$ the right Kan extension.
 \par 
 Similarly, for a given morphism $f:c \to d$ in a category $\C$ with small limits and colimits we denote the adjunctions 
 \begin{center}
  \begin{tikzcd}[row sep=0.5in, column sep=0.5in]
   \C_{/c} \arrow[r, "f_!", shift left=1.4, bend left=30] \arrow[r, "f_*"', shift right=1.4, bend right = 30]& \C_{/d} \arrow[l, "f^*"'] 
  \end{tikzcd}
 \end{center}
 where $f_!$ is the postcomposition functor, $f^*$ the pullback functor and $f_*$ is the right adjoint to $f^*$.
 
 Finally, let $\C$ be a category with final object $1$. Then we use notation $\{y\}: X \to Y$ 
 for the unique map that factors through the map $1 \to Y$ that picks out the element $y$ in $Y$.
 
 \subsection{Acknowledgements} \label{Subsec Acknowledgements}
 I want to thank my advisor Charles Rezk who has guided me through every step of the work. I want to in particular thank him for many helpful 
 remarks on previous drafts. I also want to thank Matt Ando for many fruitful conversations and suggestions.
 Finally, I want to thank the referee for many helpful comments and suggestions on how to improve and simplify the theorems.

\section{Another Look at the Yoneda Lemma for Classical Categories} \label{Sec Another Look at the Yoneda Lemma for Categories}
The Yoneda lemma is an important result in classical category theory and is thus well known among practitioners of category theory. 
A lesser known aspect of the Yoneda lemma is that it can be expressed in several different ways. 
Concretely we want to review four different faces of the Yoneda lemma, which are summarized in this table:
\begin{center}
 \begin{tabular}{|c|c|c|}
  \hline
  & {\it Hom} & {\it Tensor} \\ \hline
  {\it Functor} & $\Nat ( \Hom_{\C}(C, -), F) \xrightarrow{\cong} F(C)$ &  
  $\Hom_{\C}(C, -) \underset{\C}{\otimes} F \xrightarrow{\cong} F(C)$\\ 
  & \cref{lemma:hom functor Yoneda} & \cref{lemma:tensor functor Yoneda} \\ \hline
  {\it Fibration} & $\Fun_{/\C}(\C_{/C} , \mathscr{D}) \xrightarrow{\cong} \{ C \} \underset{\C}{\times} \mathscr{D}$ &
  $\C_{C/} \underset{\C}{\otimes} \mathscr{D} \xrightarrow{ \ \ \cong \ \ } 
  \{ C \} \underset{\C}{\otimes} \mathscr{D}$ \\
  & \cref{lemma:hom fibration Yoneda} & \cref{lemma:tensor fibration Yoneda} \\ \hline   
 \end{tabular}
\end{center}

Let us start with the most common form of the Yoneda lemma, which can be found in any introductory book on classical category theory. 
Here is a version that appears in \cite[Page 61]{maclane1998categories}.
\begin{lemone} \label{lemma:hom functor Yoneda}
 (Hom version of Yoneda for functors)
 If $F: \C \to Set$ is a functor and $C \in \C$ an object, then the natural map
 \begin{center}
  \begin{tikzcd}[row sep=0.07in, column sep=0.5in]
   \Nat ( \Hom_{\C}(C, -), F) \arrow[r, "\cong"] & F(C) \\
   \reb\alpha: \Hom_{\C}(C, -)\to F \leb \arrow[r, mapsto] &  \alpha_C (id_C) 
  \end{tikzcd}
 \end{center}
is a bijection.
\end{lemone}

 There is, however, a different way this equivalence can be phrased.  
 It relies on the {\it Hom-Tensor Adjunction}.
 
\subsection{Tensor Product of Functors and Yoneda Lemma} \label{Subsec Tensor Product of Functors and Yoneda Lemma}
 Most of the material in this subsection can be found in greater detail in \cite[VII.2]{maclanemoerdijk1994topos}.
 For this subsection let $\C$ be a fixed category and $F: \C \to Set$ and $P: \C^{op} \to \set$ be two functors.
 Then we define the tensor product as the following colimit diagram $F \otimes_{\C} P$.
 \begin{center}
  \begin{tikzcd}[row sep=0.5in, column sep=0.5in]
   \ds\coprod_{C,C' \in \C} P(C) \times Hom_{\C}(C,C') \times F(C) 
   \arrow[r, shift left=1.2, "\varphi"] \arrow[r, shift right=1.2, "\psi"'] & 
   \ds\coprod_{C \in \C} P(C) \times F(C)  \arrow[r, shorten >=1ex,shorten <=1ex] & \ds F \underset{\C}{\otimes} P
  \end{tikzcd}
 \end{center}
 where $\varphi(a,f,b) = (P(f)(a),b)$ and $\psi(a,f,b) = (a,F(f)(b))$.
 So the tensor product of two functors is the product of the values quotiented out by the mapping relations. 
 This definition generalizes the tensor product of two rings, which is the motivation for this notation.
 Similar to the case of rings this definition of a tensor product fits into a {\it hom-tensor adjunction}.
 \begin{theone}
  Let $\C$ be a category and $F: \C \to \set$ a functor. Then we have the adjunction
  \begin{center}
   \begin{tikzcd}[row sep=0.5in, column sep=1in]
    \Fun(\C^{op},\set)  \arrow[r, shift left=1.4, "- \otimes_{\C} F"] &
    \set \arrow[l, shift left=1, "\Hom_{\set}(F(-) \comma -)"] 
   \end{tikzcd}
  \end{center}
  where the left adjoint takes $P$ to $P \otimes_{\C} F$ and the right adjoint takes a set $S$
  to the functor which takes an object $C$ to $Hom_{\set}(F(C),S)$.
 \end{theone}

 \begin{remone}
  Note that we could have made the same construction for the case where $\set$ is replaced with any category which has all colimits.
  However, here we do not need to work at this level of generality. For more details on the general construction see 
  \cite[Page 358]{maclanemoerdijk1994topos}.
 \end{remone}
 
 With the tensor product at hand we can state another version of the Yoneda lemma.
 
 \begin{lemone}\label{lemma:tensor functor Yoneda}
 (Tensor version of Yoneda for functors)
 If $P: \C^{op} \to Set$ is a functor and $C \in \C$ an object, then the natural map
 \begin{center}
  \begin{tikzcd}[row sep=0.07in, column sep=0.5in]
   \Hom_{\C}(C, -) \underset{\C}{\otimes} P \arrow[r, "\cong"] & P(C) \\
   (f:C \to C',a \in P(C')) \arrow[r, mapsto] & P(f)(a) 
  \end{tikzcd}
 \end{center}
 is a bijection.  
 \end{lemone}

 This version of the Yoneda lemma has the following basic corollaries, which should look quite familiar.
 
 \begin{corone}
  Let $\C$ be a category and $C,C'$ two objects. Then we have the following isomorphism.
  $$ Hom_{\C}(C, -) \underset{\C}{\otimes} Hom_{\C}( - , C') \cong Hom_{\C}(C,C')$$
 \end{corone}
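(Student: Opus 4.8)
The plan is to obtain this as the special case $F = Hom_{\mathscr{C}}(-,C')$ of the tensor version of the Yoneda lemma proved just above. First I would check that, for a fixed object $C' \in \mathscr{C}$, the assignment $D \mapsto Hom_{\mathscr{C}}(D,C')$ genuinely defines a functor $\mathscr{C}^{op} \to \set$: a morphism $g \colon D_1 \to D_2$ acts by precomposition $Hom_{\mathscr{C}}(g,C') \colon Hom_{\mathscr{C}}(D_2,C') \to Hom_{\mathscr{C}}(D_1,C')$, and functoriality of this assignment is exactly unitality and associativity of composition in $\mathscr{C}$. Feeding this $F$ into the tensor Yoneda lemma then produces a bijection
$$ Hom_{\mathscr{C}}(C,-) \underset{\mathscr{C}}{\otimes} Hom_{\mathscr{C}}(-,C') \xrightarrow{\ \cong\ } F(C) = Hom_{\mathscr{C}}(C,C'),$$
which is precisely the asserted isomorphism.

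For concreteness I would also unwind the comparison map, since its description makes the statement transparent: on the coproduct $\coprod_{D \in \mathscr{C}} Hom_{\mathscr{C}}(D,C') \times Hom_{\mathscr{C}}(C,D)$ presenting the tensor product it sends a pair $(h \colon D \to C',\, f \colon C \to D)$ to the composite $h \circ f \in Hom_{\mathscr{C}}(C,C')$, and this passes to the quotient along the coequalizer maps $\varphi, \psi$ exactly because composition in $\mathscr{C}$ is associative; the inverse sends $k \colon C \to C'$ to the class of $(k, id_C)$. The verification that these are mutually inverse is the same computation already carried out in the proof of the tensor Yoneda lemma, so no new work is required here.

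The only point needing care — and the nearest thing to an obstacle — is bookkeeping of variance: one must substitute the \emph{contravariant} hom-functor $Hom_{\mathscr{C}}(-,C')$, not the covariant $Hom_{\mathscr{C}}(C',-)$, and keep in mind that in the expression $Hom_{\mathscr{C}}(C,-) \underset{\mathscr{C}}{\otimes} Hom_{\mathscr{C}}(-,C')$ the first slot is covariant while the second is contravariant, matching the hypotheses of the theorem. Once this identification is made correctly, the corollary is a verbatim instance of the preceding result.
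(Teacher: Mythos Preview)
Your proposal is correct and matches the paper's approach exactly: the paper presents this as an immediate corollary of the tensor version of the Yoneda lemma with no separate proof, and your substitution $F = Hom_{\mathscr{C}}(-,C')$ is precisely the intended specialization. The extra unwinding of the comparison map you include is more detail than the paper provides, but it is entirely consistent with the omitted argument.
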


 \begin{corone}
  Let $\C$ be a category and $P, Q: \C^{op} \to Set$ be two functors.
  Then a natural transformation $\alpha: P \to Q$ is a natural equivalence if and only if 
  $$Hom_{\C}(C, -) \underset{\C}{\otimes} P \to Hom_{\C}(C, -) \underset{\C}{\otimes} Q$$
  is a bijection for every object $C \in \C$.
 \end{corone}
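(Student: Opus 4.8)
The plan is to deduce this directly from the tensor version of the Yoneda lemma stated above, together with the naturality of the isomorphism it produces. First I would make precise that the assignment $P \mapsto Hom_{\mathscr{C}}(C,-) \underset{\mathscr{C}}{\otimes} P$ is functorial: a natural transformation $\alpha\colon F \to G$ of functors $\mathscr{C}^{op} \to Set$ is compatible with the maps $\varphi,\psi$ in the coequalizer defining each tensor product, hence descends to a well-defined map $id\otimes\alpha\colon Hom_{\mathscr{C}}(C,-) \underset{\mathscr{C}}{\otimes} F \to Hom_{\mathscr{C}}(C,-) \underset{\mathscr{C}}{\otimes} G$. (Alternatively this is automatic, since $-\otimes_{\mathscr{C}} F$ is a left adjoint, hence a functor, and the tensor product is symmetric enough to be functorial in the second variable as well.)

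Next I would verify that the Yoneda isomorphism $y_F\colon Hom_{\mathscr{C}}(C,-) \underset{\mathscr{C}}{\otimes} F \xrightarrow{\ \cong\ } F(C)$ is natural in $F$, i.e. that the square
\begin{center}
\begin{tikzcd}
Hom_{\mathscr{C}}(C,-)\underset{\mathscr{C}}{\otimes} F \arrow[r, "y_F"] \arrow[d, "id\otimes\alpha"'] & F(C) \arrow[d, "\alpha_C"] \\
Hom_{\mathscr{C}}(C,-)\underset{\mathscr{C}}{\otimes} G \arrow[r, "y_G"'] & G(C)
\end{tikzcd}
\end{center}
commutes. This is checked on representatives: the class of $(f,a)$ with $f\colon C \to C'$ and $a \in F(C')$ is sent along the top to $F(f)(a)$ and then down to $\alpha_C(F(f)(a))$, while going down first produces the class of $(f,\alpha_{C'}(a))$ and then $G(f)(\alpha_{C'}(a))$; these agree precisely by the naturality square of $\alpha$ applied to $f$.

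With the commuting square in hand the corollary is formal. If $\alpha$ is a natural equivalence then every $\alpha_C$ is a bijection, so $id\otimes\alpha = y_G^{-1}\circ \alpha_C \circ y_F$ is a bijection for every $C$. Conversely, if $id\otimes\alpha$ is a bijection for every $C$, then $\alpha_C = y_G \circ (id\otimes\alpha)\circ y_F^{-1}$ is a composite of bijections, hence a bijection; and a natural transformation all of whose components are bijections is a natural equivalence. The only ``obstacle'' here is bookkeeping — tracking representatives through the colimit that defines the tensor product and checking that $id\otimes\alpha$ descends to it — with no deeper idea required beyond the Yoneda isomorphism and the observation that a morphism conjugate to a bijection is a bijection.
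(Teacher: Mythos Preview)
Your proposal is correct and is exactly the standard argument one would expect. The paper itself does not give a proof of this corollary at all: it simply lists it (together with the preceding corollary) as a ``basic corollary'' of the tensor version of the Yoneda lemma, with the remark that it ``should look quite familiar.'' Your write-up fills in precisely the routine details the paper leaves implicit---functoriality of the tensor in the second variable and naturality of the Yoneda isomorphism in $F$---and then reads off the conclusion from the resulting commuting square.
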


 \subsection{From Functors to Fibrations: The Grothendieck Construction} \label{subsec:functors to fibrations}
 We want to now translate the Yoneda lemma from a statement about functors to a statement about fibrations. 
 This requires us to translate between functors and fibrations which we will do via the Grothendieck construction.
 
 \begin{remone}
  We will need a careful understanding of the Grothendieck construction in the coming sections. 
  Thus the review in this section is self-contained. However, the ideas are in no way new and a more detailed approach can be found in many 
  places, such as \cite[I.5]{maclanemoerdijk1994topos}, or \cite[A1.1.7, B1.3.1]{johnstone2002elephanti}.
 \end{remone}
 
 \begin{defone}
  Let $\C$ be a category. Define
  $$\int_\C : \Fun(\C,\set) \to \cat_{/\C}$$ 
  as the functor that takes $F: \C \to \set$ to the category $\int_\C F \to \C$ with 
  \begin{itemize}
   \item Objects: Pairs $(c,x)$ where $c$ is an object in $\C$ and $x \in F(c)$.
   \item Morphisms: A morphisms $(c,x) \to (d,y)$ is a choice of morphism $f: c \to d$ in $\C$ such that $F(f)(x) = y$.
  \end{itemize}
  It comes with an evident projection map $\pi_F: \int_\C F \to \C$. 
 \end{defone}
 
 This functor has a left adjoint and a right adjoint that we want to define in detail.
 
 \begin{defone}
  Let $\C$ be a category. We define the functor  
  $$\C_{/-}: \C \to \cat_{/\C}$$
  that takes an object to the over-category $\C_{/c}$ and a morphism $f:c \to d$ to the post-composition $f_!: \C_{/c} \to \C_{/d}$.
  
  Similarly, define the functor 
  $$\C_{-/}: \C^{op} \to \cat_{\C/}$$
  that takes an object to the under-category $\C_{c/}$ and a morphism $f: c \to d$ to the precomposition $f^*: \C_{d/} \to \C_{c/}$.
 \end{defone}

 For a given category over $\C$, $p: \D \to \C$ define 
 $$\T_\C(p: \D \to \C): \C \to \set$$
 as the composition 
 $$\C \xrightarrow{ \ \C_{/-} \ } \cat_{/\C} \xrightarrow{ \ - \times_\C \D \ } \cat \xrightarrow{ \ \pi_0 \ } \set$$
 in other words we have $\T_\C(c) = \pi_0(\C_{/c} \times_\C \D)$.
 Similarly, define 
 $$\H_\C(p: \D \to \C): \C \to \set$$ 
 as the composition
 $$\C \xrightarrow{ \ (\C_{-/})^{op} \ } (\cat_{/\C})^{op} \xrightarrow{ \ \Hom_{/\C}(-,\D) \ } \set$$
 meaning we have $\H_\C(c) = \Hom_{/\C}(\C_{c/}, \D)$.
 We claim that $\T_\C$ is the left adjoint and $\H_\C$ is the right adjoint to $\int_\C$.
 
 \begin{propone} \label{prop:integral adjunctions}
  We have following diagram of adjunctions.
  \begin{center}
   \begin{tikzcd}[row sep=0.5in, column sep=0.9in]
    \Fun(\C,\set) \arrow[r, "\int_\C" description] & \cat_{/\C} \arrow[l, bend left = 30, "\H_\C", "\bot"'] \arrow[l, bend right=30, "\T_\C"', "\bot"]  
   \end{tikzcd}
  \end{center}
 \end{propone}

 \begin{proof}
  We first prove the right adjoint. The functor $\int_\C$ is a left Kan extension
  \begin{center}
   \begin{tikzcd}[row sep=0.5in, column sep=0.5in]
    \C ^{op} \arrow[r, "\C_{-/}"] \arrow[d, "\Yon"'] & \cat_{/\C} \\
    \Fun(\C,\set) \arrow[ur, dashed, "\int_\C"']
   \end{tikzcd}
  \end{center}
  which means it commutes with colimits and has a right adjoint given by $\H_\C(p: \D \to \C) = \Hom_\C(\C_{-/},\D)$.
  \par 
  For the left adjoint, we first show that $\T_\C$ commutes with colimits. 
  As colimits are computed point-wise this means we have to prove that for every object $c$ the functor 
  $$\pi_0(\C_{/c} \times_{\C} - ) : \cat_{/\C} \to \set$$
  commutes with colimits. This functor is a composition and so we check separately that both are left adjoints:
  \begin{enumerate}
   \item $\C_{/c} \times_\C -$ is a left adjoint because $\C_{/c} \to \C$ is a Conduch{\'e} functor \cite{conduche1972fibrations}.
   \item $\pi_0$ is the left adjoint of the inclusion functor $\set \to \cat$.
  \end{enumerate}
  Now, we prove that $\T_\C$ is the left adjoint. Every category over $\C$ is a colimit of functors $\alpha: [n] \to \C$ 
  and so the result follows from the following natural isomorphisms:
  \begin{center}
  \begin{tikzcd}
   \Hom_{/\C}(\alpha:[n] \to \C, \int_\C G) \arrow[r, "\{ 0 \}^*", "\cong"']
   & \Hom_{/\C}(\alpha \circ \{ 0 \}: [0] \to \C, \int_\C G) \arrow[r, "\alpha(0)", "\cong"']
   \arrow[d, phantom, ""{coordinate, name=Z}]
   & G(\alpha(0)) \arrow[dll,
   "\Yon_{\alpha(0)}" description, "\cong"' near start,
   rounded corners,
   to path={ -- ([xshift=2ex]\tikztostart.east)
   |- (Z) [near end]\tikztonodes
   -| ([xshift=-2ex]\tikztotarget.west)
   -- (\tikztotarget)}] \\
   \Nat(\Hom(\alpha(0),-),G) \arrow[r, "\cong"]
   & \Nat(\T_\C(\alpha:[n] \to \C),G)
  \end{tikzcd}.
 \end{center}
  Here the first isomorphism follows from the fact that a functor out of $[n]$ is determined by the image of $[0]$ and 
  the second isomorphism is just the Yoneda lemma.
 \end{proof}

 In fact $\int_\C$ has even more desirable properties.
 
 \begin{lemone} \label{lemma:int fully faithful}
  $\int_\C: \Fun(\C,\set) \to \cat_{/\C}$ is fully faithful.
 \end{lemone}

 \begin{proof}
  Let $F,G : \C \to \set$ be two functors. We need to prove that the map 
  $$\Nat(F,G) \to \Hom_{/\C}(\int_\C F , \int_\C G)$$
  is a bijection of sets. For that we will construct an inverse. Concretely, we define 
  $$\I_\C : \Hom_{/\C}(\int_\C F , \int_\C G) \to \Nat(F,G) $$
  as follows. For a given functor $H: \int_\C F \to \int_\C G$ over $\C$ we define the natural transformation 
  $\I_\C(H)_c(x) = H(c,x)$. The functoriality of $H$ implies that $\I(H)$ is natural. 
  \par 
  It remains to show these are inverses. For a given natural transformation $\alpha: F \Rightarrow G$ we have 
  $$(\I_\C \int_\C (\alpha))_c(x) = \int_\C(\alpha)(c,x) = \alpha_c(x)$$
  and on the other side 
  $$\int_\C(\I_\C(H))(c,x) = (\I_\C(H))_c(x) = H(c,x)$$
  finishing the proof.
 \end{proof}
 
 This has direct implication for $\T_\C$ and $\H_\C$.
 
 \begin{corone}
  $\T_\C$ is a localization functor and $\H_\C$ is a colocalization functor.
 \end{corone}

 We end this subsection by observing that while $\int_\C$ is fully faithful, it is in fact not essentially surjective.
 
 \begin{defone}
  A functor $p: \D \to \C$ is {\it conservative} if it reflects isomorphisms. 
 \end{defone}

 \begin{lemone} \label{lemma:Grothendieck fibration conservative}
  Let $F: \C \to \set$ be a functor. Then $\pi_F:\int_\C F \to \C$ is conservative.
 \end{lemone}

 \begin{proof}
  Let $f: (c,x) \to (d,y)$ be a morphism in $\int_\C F$ such that the underlying morphism $\pi_F(f): c \to d$ is an isomorphism. We need to show
  that $f$ in $\int_\C F$ is an isomorphism and we will do so by providing an inverse. 
  Let $f^{-1}: d \to c$ in $\C$ be the inverse of $\pi_F(f)f$ in $\C$. The inverse is now given by the lift
  $f^{-1}: (d,y) \to (c,x)$.
 \end{proof}
 
 Thus we need to restrict our attention to the essential image of $\int_\C$, which leads us to discrete Grothendieck fibrations.
 
 \subsection{Yoneda Lemma for Grothendieck Fibrations} \label{subsec Yoneda Lemma for Fibered Categories}
 In this subsection we want to use the fully faithful functor $\int_\C$ to translate both versions of the Yoneda lemma 
 from a functorial statement to a fibrational one.  
 
 This might not be a major improvement when studying $1$-categories, 
 however, in the world of higher categories functors can be difficult to study, because of the homotopy coherence. 
 On the other hand, fibrations can be defined and studied in a straightforward manner.
 Thus a fibrational approach to the Yoneda lemma is an excellent first step for a generalization to a Yoneda lemma for simplicial spaces.
 
 As we observed in \cref{lemma:int fully faithful}, $\int_\C$ is fully faithful, however, it is not essentially surjective!
 
 \begin{defone} \label{def:discrete Groth fib}
  A functor $P: \D \to \C$ is a {\it discrete Grothendieck opfibration} over $\C$ if it is in the essential image of $\int_\C$, 
  meaning there exists a functor 
  $F: \C \to \set$ and isomorphism
  $\int_\C F \xrightarrow{ \ \cong \ } \D$ 
  over $\C$.
 \end{defone}

 Fortunately, there is also an internal characterization of discrete Grothendieck opfibrations.
 
 \begin{lemone} \label{lemma:discrete Groth fib lifting} 
  A functor $P: \D \to \C$  is a discrete Grothendieck opfibration over $\C$ if and only if for 
  any map $f: C \to C'$ in $\C$ and object $D$ in $\mathscr{D}$ such that $P(D) = C$,
  there exists a {\it unique} lift $\hat{f}:D \to D'$ such that $P(\hat{f}) = f$.
 \end{lemone}

 \begin{proof}
  Let $F: \C \to \set$ be a functor. 
  Then $\int_\C F \to \C$ satisfies the lifting condition stated in the lemma. Indeed, for a morphism $f: C \to C'$ 
  and a lift $(C,x)$ where $x \in F(C)$, there is a unique lift given by the morphism $f: (C,x) \to (C',F(f)(x))$.
  \par 
  On the other hand let us assume that $P: \D \to \C$ satisfies the lifting condition of the lemma. 
  Then we will construct a functor $F: \C \to \set$ such that $\int_\C F \cong \D$ over $\C$. 
  \par 
  First note that the unique lifting condition implies that the fiber of $P$ over every given point $c$, $P^{-1}(c)$, 
  is a discrete category i.e. a set. Indeed left $f$ be a morphism in $\D$ such that $P(f) = \id_C$. Then 
  by the uniqueness assumption $f = \id$. 
  \par 
  Now define $F$ as follows:
  \begin{itemize}
   \item {\bf Objects:} For an object $c$ in $\C$ define $F(c) = P^{-1}(c)$
   \item {\bf Morphisms:} For a morphism $f: c \to c'$ define $F(f): F(c) \to F(c')$ as the map that takes $x \in F(c)$ to the 
   target of the unique lift of $f$ in $\D$. 
  \end{itemize}
  The standard projection $\pi: \int_\C F \to \C$ exactly recovers $P: \D \to \C$. 
 \end{proof}
 
 The previous characterization allows us give a contravariant version of Grothendieck opfibrations.
 
 \begin{defone}
  $P: \D \to \C$ is called a {\it discrete Grothendieck fibration} for 
  any map $f: C \to C'$ in $\C$ and object $D'$ in $\mathscr{D}$ such that $P(D') = C'$
  there exists a {\it unique} lift $\hat{f}:D \to D'$ such that $P(\hat{f}) = f$.
 \end{defone}
 
\begin{remone} \label{rem:grothendieck fib and opfib}
 Note it is very rare that a functor $p: \D \to \C$ is a discrete Grothendieck fibration as well as a discrete Grothendieck 
 opfibration. Concretely it only happens if $\D \cong \int_\C F$ over $\C$ where $F: \C \to \set$ takes every morphism to an isomorphism. 
\end{remone}
 
 \begin{remone}
  Discrete Grothendieck fibrations are a special case of more general {\it Grothendieck fibrations} that correspond to 
  functors valued in categories. See \cite{streicher2018fibration} for a readable introduction to general 
 Grothendieck fibrations.
 \end{remone}

 We now want to move on to the Yoneda lemma for Grothendieck fibrations, but for that we need the analogue of representable functors.
 
 \begin{exone} \label{ex:representable Grothendieck fib}
  Let us determine the category $\int_\C \Hom(c,-)$. Its objects are pairs $(d,f: c \to d)$ and a morphism 
  $(d,f: c \to d) \to (d',f':c \to d')$ is a morphism $g: d \to d'$ such that $gf = f' : c \to d'$. 
  Thus we just rediscovered the under-category $\C_{c/} \to \C$. 
 \end{exone}
 
 With the previous remarks at hand we can now phrase the first fibered version of the Yoneda Lemma:
 
 \begin{lemone} \label{lemma:hom fibration Yoneda}
  (Hom version of Yoneda for fibered categories)
  Let $P: \mathscr{D} \to \C$ be a discrete Grothendieck opfibration.
  Then the natural map
  \begin{center}
  \begin{tikzcd}[row sep=0.07in, column sep=0.5in]
   \Fun_{/\C}(\C_{C/} , \mathscr{D}) \arrow[r, "\cong"] & \Fun_{/\C}(\{\id_C\}, \D)  \arrow[r, equal] &[-0.35in] \{ C \} \underset{\C}{\times} \mathscr{D} \\
   \reb F: \C_{/C} \to \mathscr{D} \leb \arrow[r, mapsto] & F(\id_C) 
  \end{tikzcd}
 \end{center}
  is a bijection.
 \end{lemone}
 
 Similar to the previous part we also have a tensor version of the Yoneda lemma for fibered categories.
 First, however, we have to define a notion of tensor product for fibered categories.
 The analogue of the tensor product of two categories $\mathscr{D}$ and $\mathscr{E}$ over $\C$
 should be the pullback $\mathscr{D} \times_{\C} \mathscr{E}$.
 However, the pullback is not necessarily a set by itself
 and so we have to make the necessary adjustments. This means we have the following:
 $$ \mathscr{D} \underset{\C}{\otimes} \mathscr{E} = \pi_0 (\mathscr{D} \underset{\C}{\times} \mathscr{E})$$
 where $\pi_0$ is the set of connected components.
 With this definition we can state our last version of the Yoneda lemma
 
 \begin{lemone} \label{lemma:tensor fibration Yoneda}
  (Tensor version of Yoneda for fibered categories)
  Let $P: \D \to \C$ be a discrete Grothendieck opfibration.
  Then the natural map 
  \begin{center}
  \begin{tikzcd}[row sep=0.07in, column sep=0.5in]
   \C_{/C} \underset{\C}{\otimes} \mathscr{D} \arrow[r, "\cong"] & \{ C \} \underset{\C}{\times} \mathscr{D} \\
   (f: C' \to C \comma D') \arrow[r, mapsto] & Codomain(\hat{f}) 
  \end{tikzcd}
 \end{center}
 is a bijection (here $\hat{f}$ is the unique lift of $f$ with domain $D'$).
 \end{lemone}
 
 Our goal in the coming sections is to build the necessary machinery to generalize these statements to the setting of simplicial spaces. 
 In particular, we will define the correct analogue to discrete Grothendieck opfibrations, study their properties and prove the Yoneda lemma. 
  
 Concretely, we have following generalizations:
 \begin{center}
  \begin{tabular}{c|c|c}
   Statement & Category & Higher Category \\ \hline 
   Grothendieck Construction & \cref{prop:integral adjunctions} & \cref{the:simplicial groth construction} \\ \hline 
   Fibration & \cref{def:discrete Groth fib}/\cref{lemma:discrete Groth fib lifting} & \cref{def:left fibration} \\ \hline 
   Conservativity & \cref{lemma:Grothendieck fibration conservative}  & \cref{lemma:left fibration conservative} \\ \hline 
   Yoneda Lemma (Hom) & \cref{lemma:hom fibration Yoneda} & \cref{cor:yoneda lemma Segal spaces hom version} \\ \hline 
   Yoneda Lemma (Tensor) & \cref{lemma:tensor fibration Yoneda} & 
   \cref{the:condition for covar equiv general rep}/\cref{rem:yoneda left fib} 
  \end{tabular}
 \end{center}

\section{Basics \& Conventions} \label{Sec Basics Conventions}
 In this section we review some basic concepts that we will need in the coming sections. 
 In particular, we review Joyal-Tierney calculus (\cref{subsec:joyal tierney calculus}) \cite{joyaltierney2007qcatvssegal} as a powerful notational tool. 
 Moreover, we review notation for simplicial sets (\cref{Subsec Simplicial Sets}), simplicial spaces (\cref{Subsec Simplicial Spaces})
 and its associated Reedy model structure (\cref{Subsec Reedy Model Structure}) 
 along with two localizations of the Reedy model structure (\cref{Subsec Diagonal & Kan Model Structure}). 
 Finally, we use complete Segal spaces as our model of higher categories and thus will end the 
 section with a quick review following \cite{rezk2001css} (\cref{Subsec Complete Segal Spaces}). 
 
 \subsection{Joyal-Tierney Calculus} \label{subsec:joyal tierney calculus}
  As we primarily work with simplicial spaces it is helpful to first set up some notation that will simplify many statements.
  The notation introduced here is due to Joyal and Tierney \cite[Section 7]{joyaltierney2007qcatvssegal}.
 
 \begin{notone}
  For this subsection let $\C$ be a locally Cartesian closed bicomplete category.
 \end{notone}

 \begin{defone} \label{def:pushout product}
  Let $f: A \to B$ and $g: C \to D$ be two maps in $\C$. We define the pushout product 
  as the universal map out of the pushout
  $$f \square g: A \times D \coprod_{A \times C} B \times C \to B \times D$$
  induced by the commutative square
  \begin{center}
   \begin{tikzcd}[row sep=0.3in, column sep=0.3in]
    A \times C \arrow[r, "\id_A \times g"] \arrow[d, "f \times \id_C"] & A \times D \arrow[d] \arrow[ddr, bend left = 20, , "f \times id_D"] & \\
    B \times C \arrow[r] \arrow[drr, bend right = 20, "\id_B \times g"'] & \ds A \times D \coprod_{A \times C} B \times C \arrow[dr, "f \square g" description] & \\
    & & B \times D 
   \end{tikzcd}
   .
  \end{center}
   Moreover, for two sets of maps $\mathcal{A}$ and $\mathcal{B}$ we use the notation 
  $$\mathcal{A} \square \mathcal{B} = \{f \square g : f \in \mathcal{A}, g \in \mathcal{B} \}.$$
 \end{defone}
 
 \begin{defone} \label{def:pullback exponential} 
   For two maps $f: A \to B$ and $p: Y \to X$ we define the pullback exponential 
  $$\exp{f}{p}: Y^B \to Y^A \underset{X^A}{\times} X^B$$ 
  induced by the commutative square
  \begin{center}
   \begin{tikzcd}[row sep=0.3in, column sep=0.3in]
    Y^B \arrow[dr, "\exp{f}{p}" description] \arrow[drr, "Y^f", bend left =20] \arrow[ddr, "p^f"', bend right = 20] & & \\
    & Y^A \underset{X^A}{\times} X^B \arrow[r] \arrow[d] & Y^A \arrow[d, "p^A"] \\
    & X^B \arrow[r, "X^f"] & X^A
   \end{tikzcd}
   .
  \end{center}
  Moreover, for two sets of maps $\mathcal{A}$ and $\mathcal{X}$ we use the notation 
  $$\exp{\mathcal{A}}{\mathcal{X}} = \{\exp{f}{p} : f \in \mathcal{A}, p \in \mathcal{X} \}$$
 \end{defone}

  These two functors give us an adjunction of arrow categories: 
  \begin{center}
   \adjun{\C^{\reb 1 \leb }}{\C^{\reb 1 \leb}}{- \square f }{\exp{f}{-}}
   .
  \end{center}
  
  The key result about these two constructions is that they can help us better understand lifting conditions.
  
  \begin{notone}
   Let $\mathcal{L}$ and $\mathcal{R}$ be two sets of morphisms in $\C$. 
   If $\mathcal{L}$ has the left lifting property with respect to $\mathcal{R}$ then we use the notation
   $\mathcal{L} \pitchfork  \mathcal{R}$.
  \end{notone}

  \begin{propone} \label{prop:joyal tierney lifting}
   (\cite[Proposition 7.6]{joyaltierney2007qcatvssegal})
   Let $\mathcal{A}$, $\mathcal{B}$ and $\mathcal{X}$ be three sets of morphisms in $\C$. Then:
   $$\mathcal{A} \square \mathcal{B} \pitchfork \mathcal{X} \Leftrightarrow
   \mathcal{A} \pitchfork \exp{\mathcal{B}}{\mathcal{X}} \Leftrightarrow
   \mathcal{B} \pitchfork \exp{\mathcal{A}}{\mathcal{X}}.$$
  \end{propone}

 \subsection{Simplicial Sets} \label{Subsec Simplicial Sets}
 $\s$ will denote the category of simplicial sets, which we will call {\it spaces}.
 We will use the following notation with regard to spaces:
 \begin{enumerate} 
  \item $\Delta$ is the indexing category with objects posets $[n] =  \{ 0,1,...,n \} $ and mappings maps of posets.
  \item We will denote a morphism $[n] \to [m]$ by a sequence of numbers $\ordered{a_0, ... ,a_n}$, where $a_i$ is the image of $i \in [n]$.
  \item \label{item:morphism notation} $\Delta[n]$ denotes the simplicial set representing $[n]$ i.e. $\Delta[n]_k = \Hom_{\Delta}([k], [n])$. 
  \item $\partial \Delta[n]$ denotes the boundary of $\Delta[n]$ i.e. 
  the largest sub-simplicial set which does not include $id_{[n]}: [n] \to [n]$.
  Similarly $\Lambda[n]_l$ denotes the largest simplicial set in $\Delta[n]$ which does not include $l^{th}$ face.
  \item For a simplicial set $S$ we denote the face maps by $d_i: S_{n} \to S_{n-1} $ and the degeneracy maps by $s_i: S_n \to S_{n+1}$.
  \item \label{item:Jn} Let $I[l]$ be the category with $l$ objects and one unique isomorphisms between any two objects.
  Then we denote the nerve of $I[l]$ as $J[l]$. 
  It is a Kan fibrant replacement of $\Delta[l]$ and comes with an inclusion $\Delta[l] \rightarrowtail  J[l]$,
  which is a Kan equivalence.
  \item We say a space $K$ is {\it discrete} if for each $n$, $K_n = K_0$ and all simplicial operators are identity maps. 
 
 \end{enumerate}

 \subsection{Simplicial Spaces} \label{Subsec Simplicial Spaces}  
 $\ss = \Fun(\Delta^{op}, \s) $ denotes the category of simplicial spaces (bisimplicial sets). 
 We have the following basic notations with regard to simplicial spaces:
 \begin{enumerate}
  \item \label{item:embed} We embed the category of spaces inside the category of simplicial spaces as constant simplicial spaces 
  (i.e. the simplicial spaces $S$ such that $S_n = S_0$ for all $n$ and all simplicial operator maps are identities). 
  \item More generally we say a simplicial space is {\it homotopically constant} if all simplicial operator maps $X_n \to X_m$
  are equivalences (and in particular $X_n$ are all equivalent to $X_0$).
  \item On the other hand we say a simplicial space $X$ is a simplicial discrete space if for all $n$, the space $X_n$ is discrete. 
  \item For a given simplicial space $X$ we use the notation $s$, the {\it source map}, for $d_1: X_1 \to X_0$ and $t$,
  the {\it target map}, for $d_0: X_1 \to X_0$. This is motivated by thinking of simplicial sets as nerves of categories.
  \item \label{item:Fn} Denote $F(n)$ to be the simplicial discrete space defined as
  $$F(n)_k = \Hom_{\Delta}([k],[n]).$$
  \item Similar to \setref{item:morphism notation} we denote a morphism $F(n) \to F(m)$ by $\ordered{a_0,...,a_n}$. 
  \item $\partial F[n]$ denotes the boundary of $F(n)$. Similarly $L(n)_l$ 
  denotes the largest simplicial space in $F(n)$ which lacks the $l^{th}$ face.
  \item The category $\ss$ is enriched over spaces
  $$\Map_{\ss}(X,Y)_n = \Hom_{\ss}(X \times \Delta[n], Y).$$
  \item The category $\ss$ is also enriched over itself: 
  $$(Y^X)_{kn} = \Hom_{\ss}(X \times F(n) \times \Delta[l], Y).$$
  \item By the Yoneda lemma, for a simplicial space $X$ we have a bijection of spaces
  $$X_n \cong \Map_{\ss}(F(n),X).$$
  \end{enumerate}

 \subsection{Reedy Model Structure} \label{Subsec Reedy Model Structure}
 The category of simplicial spaces has a Reedy model structure \cite{reedy1974modelstructure}, which is defined as follows:
 \begin{itemize}
  \item[(F)] A map $f: Y \to X$ is a (trivial) Reedy fibration if for each $n \geq 0$ the following map of spaces is a (trivial) Kan fibration
  $$ \Map_{\ss}(F(n),Y) \to \Map_{\ss}(\partial F(n),Y) \underset{\Map_{\ss}(\partial F(n), X)}{\times} \Map_{\ss}(F(n), X).$$
  \item[(W)] A map $f:Y \to X$ is a Reedy equivalence if it is a level-wise Kan equivalence.
  \item[(C)] A map $f:Y \to X$ is a Reedy cofibration if it is an inclusion.
 \end{itemize}
 The Reedy model structure is very helpful as it enjoys many features that can help us while doing computations.
 In particular, it is {\it cofibrantly generated}, {\it simplicial} and {\it proper}. Moreover, it is also {\it compatible with Cartesian closure},
 by which we mean that if $i: A \to B$ and $j: C \to D$
    are cofibrations and $p: X \to Y$ is a fibration then the map $i \square j$ is a cofibration and $\exp{i}{p}$ is a fibration, 
    which are trivial if any of the involved maps are trivial.
 
 These properties in particular imply that we can apply Bousfield localizations to the Reedy model structure. 
 See \cref{Sec Some Facts about Model Categories} for more details.
 
 \subsection{Diagonal \& Kan Model Structure} \label{Subsec Diagonal & Kan Model Structure}
 In the coming sections we will use various localizations of simplicial spaces that are equivalent to the Kan model structure.
 Although these results have been studied before, we will make ample use of the notation and thus will do a careful review here.
 \par 
 First we need two adjunctions between spaces and simplicial spaces. 
 
 \begin{notone} \label{not:bunch of functors}
  Let 
  $$\Diag : \Delta \to \Delta \times \Delta$$
  be the diagonal functor that takes an object $[n]$ to the pair $([n],[n])$ and let 
  $$\pi_1, \pi_2: \Delta \times \Delta \to  \Delta$$
  be the projection functors that take an object $([n],[m])$ to the projections $[n]$ or $[m]$, respectively. Finally, let 
  $$i_1, i_2: \Delta \to \Delta \times \Delta$$
  be the inclusion functors that take an object $[n]$ to $([n],0)$ or $(0,[n])$, respectively.
 \end{notone}
 
 These functors give us three adjunctions
 
 \begin{center}
  \begin{tikzcd}[row sep=0.5in, column sep=0.5in]
   \ss \arrow[r, shift left =1, "\Diag^*"] & \s \arrow[l, shift left =1, "\Diag_*"] \arrow[r, "(\pi_1)^*", shift left =1]  
   & \ss \arrow[l, "(\pi_1)_*", shift left=1] \arrow[r, "(i_1)^*", shift left=1] & \s \arrow[l, "(i_1)_*", shift left=1]
  \end{tikzcd}
  .
 \end{center}

 \begin{remone} \label{rem:pi diag enriched adjunction}
  These adjunctions $(\Diag^*,\Diag_*)$, $((\pi_1)^*, (\pi_1)_*)$ and $((i_1)^*,(i_1)^*)$ are in fact enriched adjunctions, 
  meaning that we have a natural isomorphism of spaces
  $$\Map_{\s}(\Diag^*X,Y) \cong \Map_{\ss}(X,\Diag_*Y),$$
  $$\Map_{\ss}((\pi_1)^*X,Y) \cong \Map_{\s}(X,(\pi_1)_*(Y)),$$
  $$\Map_{\s}((i_1)^*X,Y) \cong \Map_{\ss}(X,(i_1)_*(Y)).$$
 \end{remone}
 
 \begin{remone} \label{rem:notation convention s in ss}
  Our notation convention, \spaceref{item:embed}, implies that for a space $K$ we denote the simplicial space $(\pi_1)^*(K)$ by $K$ as well.
 \end{remone}

 \begin{remone} \label{rem:pi equal i}
  Notice we have $\pi_1 \circ i_1 = \id$ and so $(\pi_1)^* \circ (i_1)^* = \id$ and more importantly $(\pi_1)_* = (i_1)^*$.
  Thus, we can also think of $(i_1)_*$ as the right adjoint to $(\pi_1)_*$.
 \end{remone}

 Before we move on we want to give very detailed descriptions of these functors. 
 For a simplicial space $X$ we have: 
 $$(\Diag^*X)_n = X_{nn},$$
 $$(\pi_1)_*(X)_n = (i_1)^*(X)_n = X_{0n}.$$
 Also, for a simplicial set $K$ we have:
 $$\Diag_*(K)_n = K^{\Delta^n},$$
 $$(\pi_1)^*(K)_n = K,$$
 $$(i_1)_*(K)_n = K^{n+1}.$$
 
 \begin{remone} \label{rem:pi and diag equivalent}
  The direct computation above in particular implies that for a given simplicial set $K$ there is a natural map 
  $$(\pi_1)^*K \to \Diag_*(K)$$
  that is a Reedy equivalence.
 \end{remone}

 We want to show that the category of simplicial spaces has two model structures that makes the two adjunctions above into 
 Quillen equivalences and that will play an important role in the coming sections.
 
 \begin{theone} \label{The Diagonal Model Structure}
  There is a unique, cofibrantly generated, simplicial model structure on $\ss$, 
  called the {\it diagonal model structure} and denoted by $\ss^{diag}$, with the following specifications.
  \begin{itemize}
   \item[(W)] A map $f:X \to Y$ is a weak equivalence if the diagonal map of spaces 
   $$\Diag^*(f): \Diag^*(X) \to \Diag^*(Y)$$ 
   is a Kan equivalence.
   \item[(C)] A map $f:X \to Y$ is a cofibration if it is an inclusion.
   \item[(F)] A map $f:X \to Y$ is a fibration if it satisfies the right lifting condition for trivial cofibrations.
  \end{itemize}
 In particular, an object $W$ is fibrant if and only if it is Reedy fibrant and a homotopically constant simplicial space.
 \end{theone}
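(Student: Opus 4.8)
\emph{Plan.} I would realize the Diagonal Model Structure as a left Bousfield localization of the Reedy model structure on $s\s$ and then identify its fibrant objects and its weak equivalences by comparing with the Kan model structure on $\s$ along the diagonal functor. Concretely, set $S = \{ F(n) \to F(0) : n \geq 1 \}$ (with $F(n) \to F(0)$ the unique map) and let $L_S s\s$ be the left Bousfield localization of the Reedy model structure at $S$. Since the Reedy model structure is left proper, combinatorial (it is cofibrantly generated and $s\s$ is a presheaf category, hence locally presentable) and simplicial, the localization $L_S s\s$ exists and is again cofibrantly generated, simplicial and left proper, with the same cofibrations, namely the inclusions. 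Its cofibrations together with its weak equivalences (the $S$-local equivalences) already determine the model structure, so once these weak equivalences are identified with the diagonal equivalences, uniqueness and the description of the fibrations as the maps with the right lifting property against trivial cofibrations follow formally.

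\emph{Fibrant objects.} Since $F(n)$ and $F(0)$ are cofibrant and $Map_{s\s}(F(n), W) \simeq W_n$, a Reedy fibrant $W$ is $S$-local precisely when the map $W_0 \simeq Map_{s\s}(F(0), W) \to Map_{s\s}(F(n), W) \simeq W_n$ induced by $F(n) \to F(0)$ is a Kan equivalence for all $n$, that is, precisely when $W$ is Reedy fibrant and homotopically constant. This is the asserted description of the fibrant objects.

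\emph{Weak equivalences.} Let $d : \Delta \to \Delta \times \Delta$ be the diagonal and $d_! \dashv d^* \dashv d_*$ the resulting string of adjoints between $s\s$ and $\s$, so that $d^* X$ is the ordinary diagonal of $X$; note $d^* F(n) = \Delta[n]$ and $d^*(cK) = K$ for the constant embedding $c : \s \to s\s$. The functor $d^*$ sends inclusions to inclusions, and by the realization lemma it sends levelwise Kan equivalences, hence all Reedy equivalences, to Kan equivalences; therefore $d^* : (s\s)^{\mathrm{Reedy}} \to \s^{\mathrm{Kan}}$ is left Quillen. As $d^*$ carries every map in $S$ to the Kan equivalence $\Delta[n] \to \Delta[0]$, it descends to a left Quillen functor $d^* : L_S s\s \to \s^{\mathrm{Kan}}$, and since every object of $L_S s\s$ is cofibrant, $d^*$ preserves all $S$-local equivalences; hence every $S$-local equivalence is a diagonal equivalence. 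For the converse, let $f : X \to Y$ have $d^* f$ a Kan equivalence and let $W$ be any fibrant object, i.e. Reedy fibrant and homotopically constant. Then the unit $c(W_0) \to W$ is a Reedy equivalence, and since the diagonal computes the homotopy colimit over $\Delta^{\mathrm{op}}$ — so that $d^*$ is the total left derived functor of $\operatorname{colim}_{\Delta^{\mathrm{op}}}$, which is left adjoint to $c$ — there are natural equivalences
$$ Map_{s\s}(X, W) \ \simeq\ Map_{s\s}(X, c(W_0)) \ \simeq\ Map_{\s}(d^* X, W_0) $$
(using that $W_0$ is Kan fibrant since $W$ is Reedy fibrant), and likewise for $Y$. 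Consequently $Map_{s\s}(Y, W) \to Map_{s\s}(X, W)$ is a Kan equivalence for every fibrant $W$, so $f$ is an $S$-local equivalence. This would show that the $S$-local equivalences are exactly the diagonal equivalences, and with it the proof is complete.

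\emph{Main obstacle.} The localization bookkeeping is formal; the two substantive inputs are the realization lemma (the diagonal of a levelwise Kan equivalence is a Kan equivalence) and the identification of the diagonal with the homotopy colimit over $\Delta^{\mathrm{op}}$, which is what makes $Map_{s\s}(X, W)$ computable through $d^* X$ when $W$ is homotopically constant. Pinning down this last comparison cleanly — in particular checking that $c(W_0) \to W$ is a Reedy equivalence for homotopically constant Reedy fibrant $W$ and that the derived adjunction identifies the mapping spaces as displayed — is the step that will require the most care.
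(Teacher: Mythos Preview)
Your approach---realize the model structure as a Bousfield localization of Reedy and then identify the local objects and local equivalences---is exactly what the paper does, and your identification of the fibrant objects is correct. One cosmetic point: the paper localizes at the cofibrations $F(0)\hookrightarrow F(n)$ rather than at the collapses $F(n)\to F(0)$; the local objects coincide, but the localization theorem the paper invokes is stated for a set of \emph{cofibrations}, so you should use the inclusions.

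Your forward direction ($S$-local $\Rightarrow$ diagonal equivalence) is fine. The converse, however, has a genuine gap in the displayed chain
\[
Map_{s\s}(X, W)\ \simeq\ Map_{s\s}(X, c(W_0))\ \simeq\ Map_{\s}(d^* X, W_0).
\]
Both steps fail as written. First, $c(W_0)$ is \emph{not} Reedy fibrant (the paper itself remarks on this immediately after the theorem and names $n\mapsto W_0^{\Delta[n]}$ as its Reedy fibrant replacement), so the Reedy equivalence $c(W_0)\to W$ does not induce an equivalence of mapping spaces out of an arbitrary cofibrant $X$. Second, the underived left adjoint of $c$ is the \emph{strict} colimit $\mathrm{colim}_{\Delta^{\mathrm{op}}}$, not $d^*$: one has $Map_{s\s}(X, c(K)) \cong Map_{\s}(\mathrm{colim}_{\Delta^{\mathrm{op}}} X, K)$, and $\mathrm{colim}_{\Delta^{\mathrm{op}}} X$ is not $d^*X$ in general (for $X=F(1)$ the strict colimit is a point while the diagonal is $\Delta[1]$). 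Appealing to the fact that $d^*$ computes the \emph{derived} colimit does not rescue this, since both mapping spaces in your chain are underived.

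The clean fix is to replace $c$ by the genuine right adjoint $d_*$ of $d^*$: one computes $(d_*K)_n \cong K^{\Delta[n]}$, which \emph{is} Reedy fibrant and homotopically constant when $K$ is Kan fibrant, and the adjunction gives $Map_{s\s}(X, d_*(W_0)) \cong Map_{\s}(d^*X, W_0)$ on the nose. Since $c(W_0)\to W$ and $c(W_0)\to d_*(W_0)$ are both Reedy equivalences into Reedy fibrant targets, $W$ and $d_*(W_0)$ are Reedy equivalent, whence $Map_{s\s}(X,W)\simeq Map_{\s}(d^*X,W_0)$; with this substitution your argument goes through.
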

 
 \begin{proof}
  Let $\mathcal{L}$ be the following set of cofibrations
  $$ \mathscr{L} = \{ \ordered{0}: F(0) \to F(n) : n \geq 0 \}.$$
  Then by \cref{The Localization for sS} there is a localized model structure on $\ss$ such that cofibrations are inclusions and fibrant objects are 
  Reedy fibrant simplicial spaces $K$ such that 
  $$K_n \cong \Map_{\ss}(F(n),K) \to \Map_{\ss}(F(0),K) \cong K_0$$
  is a Kan equivalence. 
  \par 
  In order to finish the proof we only need to prove that $f: X \to Y$ is a weak equivalence in the localized model structure if and only if  
  $\Diag^*(f)$ is a Kan equivalence. Fix a map of simplicial spaces $X \to Y$ and a fibrant simplicial space $K$. Then we have following diagram 
  of spaces:
  \begin{center}
   \begin{tikzcd}[row sep=0.5in, column sep=0.5in]
    \Map_{\ss}(Y,K) \arrow[r] & \Map_{\ss}(X,K) \\
    \Map_{\ss}(Y,(\pi_1)^*(K_0)) \arrow[r] \arrow[d, "\simeq"] \arrow[u, "\simeq"] &  \Map_{\ss}(X,(\pi_1)^*(K_0)) \arrow[d, "\simeq"] \arrow[u, "\simeq"] \\
    \Map_{\ss}(Y,\Diag_*(K_0)) \arrow[r] \arrow[d, "\cong"] &  \Map_{\ss}(X,\Diag_*(K_0)) \arrow[d, "\cong"] \\
    \Map_{\s}(\Diag^*Y,K_0) \arrow[r] & \Map_{\s}(\Diag^*Y,K_0)
   \end{tikzcd}
   .
  \end{center}
  The vertical maps in the top square are Kan equivalences because $K$ is fibrant and thus homotopically constant. 
  The vertical maps in the middle square are Kan equivalences by \cref{rem:pi and diag equivalent}. 
  The vertical maps in the bottom square are bijections because of the adjunction $(\Diag^*,\Diag_*)$. 
  \par 
  Now the map $f: X \to Y$ is an equivalence in this localized model structure if and only if the top horizontal map is a Kan equivalence 
  for all fibrant objects $K$
  (as the model structure is simplicial), which by the diagram above is equivalent to the bottom map being a Kan equivalence 
  for all Kan complexes $K_0$. 
  \par 
  The fact that $K_0$ is an arbitrary Kan complex and that the Kan model structure is simplicial implies that this is equivalent to 
  $$\Diag^*f: \Diag^*X \to \Diag^*Y$$
  being a Kan equivalence, which is exactly the desired statement and finishes the proof. 
 \end{proof}
 
 \begin{theone} \label{The Kan Model Structure}
  There is a unique, cofibrantly generated, simplicial model structure on $\ss$,
  called the {\it Kan model structure} and denoted by $\ss^{Kan}$, with the following specification.
  \begin{itemize}
   \item[(W)] A map $f: X \to Y$ is a weak equivalence if 
   $$(\pi_1)_*f : (\pi_1)_*X \to (\pi_1)_*Y$$ 
   is a Kan equivalence.
   \item[(C)] A map $f: X \to Y$ is a cofibration if it is an inclusion.
   \item[(F)] A map $f: X \to Y$ is a fibration if it satisfies the right lifting condition for trivial cofibrations.
  \end{itemize}
  In particular, an object $W$ is fibrant if and only if it is Reedy fibrant and the map 
  $$\Map_{\ss}(F(n),W) \to \Map_{\ss}(\partial F(n), W)$$
  is a trivial Kan fibration for $n >0$.
 \end{theone}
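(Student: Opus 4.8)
The plan is to realize the Kan model structure as a localization of the Reedy model structure, exactly parallel to the proof of Theorem \ref{The Diagonal Model Structure}. Concretely, I would localize the Reedy model structure with respect to the set of maps
\[
\mathscr{K} = \{\, F(0) \to F(n) \,:\, n \geq 0 \,\} \cup \{\, \partial F(n) \to F(n) \,:\, n \geq 1 \,\},
\]
or, what should turn out to be equivalent after a short argument, with respect to all maps $A \to B$ of discrete simplicial spaces that are "spatially trivial", i.e. which become isomorphisms after applying the functor that remembers only the $0$-th space. Existence, cofibrant generation, simpliciality, left properness, and the characterization of cofibrations as monomorphisms are then automatic from the general theory of left Bousfield localization of the (cofibrantly generated, simplicial, proper) Reedy model structure; this is the part I would cite rather than prove, referring to the appendix on model categories promised in the outline.

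The substantive steps are the identification of the weak equivalences and of the fibrant objects. For the fibrant objects: a standard lifting argument shows $W$ is $\mathscr{K}$-local fibrant iff it is Reedy fibrant and $\mathrm{Map}(F(n),W) \to \mathrm{Map}(\partial F(n),W)$ is a trivial Kan fibration for $n > 0$ (the $\partial F(n) \to F(n)$ part) together with $W_0 \to W_n$ a Kan equivalence for all $n$ (the $F(0) \to F(n)$ part) — and in fact the first condition implies the second, since $\mathrm{Map}(F(n),W) \to \mathrm{Map}(\partial F(n),W)$ being a trivial fibration for all $n>0$ forces all the matching maps to be trivial fibrations, hence $W$ is Reedy-equivalent to the constant simplicial space on $W_0$; so the stated clean characterization of fibrant objects follows. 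For the weak equivalences: by the universal property of localization, $f\colon X \to Y$ is a $\mathscr{K}$-local equivalence iff $\mathrm{Map}(Y,W) \to \mathrm{Map}(X,W)$ is a Kan equivalence for every $\mathscr{K}$-local fibrant $W$. Since every such $W$ is Reedy-equivalent to a constant simplicial space $K$ (namely $K = W_0$, after the fibrant-replacement caveat of Remark \ref{Rem Spaces not diagonally fibrant}), and mapping into a constant space only sees $X_0$, this reduces to: $\mathrm{Map}(Y_0, K) \to \mathrm{Map}(X_0,K)$ is a Kan equivalence for all Kan complexes $K$, which by Yoneda for the Kan–Quillen model structure is exactly $f_0\colon X_0 \to Y_0$ being a Kan equivalence. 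Uniqueness of the model structure with the three stated classes is then the usual observation that two model structures on the same category with the same cofibrations and the same weak equivalences coincide.

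The main obstacle I anticipate is bookkeeping rather than conceptual: one must be careful that mapping spaces $\mathrm{Map}_{s\s}(X,W)$ into a Reedy-fibrant homotopically-constant $W$ genuinely only depend (up to equivalence) on $X_0$, and that the localizing set $\mathscr{K}$ is chosen correctly so that its local objects are precisely the homotopically constant Reedy-fibrant simplicial spaces and nothing more — in particular one should double-check that adding the maps $\partial F(n)\to F(n)$ does not over-localize. A clean way to sidestep worry about the exact generating set is to note that the Kan model structure is the \emph{right} Bousfield-type transfer along the adjunction between $s\s$ and $\s$ given by "evaluate at level $0$" and "constant simplicial space"; the constant-space functor is a right Quillen functor out of the Kan–Quillen model structure for this putative structure, and one checks it is a Quillen equivalence, which simultaneously pins down the weak equivalences as the claimed ones and gives simpliciality and properness by transfer. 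I would present the localization argument as the main line and remark on the transfer description as an alternative, leaving the verification that $\mathscr{K}$-local objects are exactly the homotopically constant Reedy-fibrant ones as the one genuinely technical lemma, provable by the same lifting manipulations used for the diagonal model structure.
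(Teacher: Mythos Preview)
There is a genuine error. The maps $F(0) \to F(n)$ should \emph{not} be in your localizing set: $(F(0))_0$ is a point while $(F(n))_0 = \{0, 1, \ldots, n\}$, so $F(0) \to F(n)$ is not an equivalence on $0$-th spaces and hence cannot be a weak equivalence in the model structure you are trying to build. Including these maps over-localizes --- an object local with respect to both families must have the degeneracy $W_0 \to W_1 \simeq W_0 \times W_0$ an equivalence, i.e.\ the diagonal on $W_0$ is an equivalence, forcing $W_0$ contractible or empty.

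The source of the confusion is your claim that ``$W_n \to \mathrm{Map}(\partial F(n), W)$ a trivial fibration for all $n > 0$'' makes $W$ Reedy equivalent to the \emph{constant} simplicial space on $W_0$. This is false: already for $n = 1$ the condition reads $W_1 \xrightarrow{\simeq} W_0 \times W_0$, so $W$ is Reedy equivalent to the $0$-\emph{coskeleton} $\mathrm{cosk}_0(W_0)$, with $(\mathrm{cosk}_0 K)_n \simeq K^{n+1}$, not to the constant object. You have conflated the fibrant objects of the Kan model structure with those of the diagonal model structure. The paper localizes only at $\mathscr{L} = \{\partial F(n) \to F(n) : n > 0\}$, and the adjunction that identifies the weak equivalences is $ev_0 \dashv \mathrm{cosk}_0$ (the constant functor is the \emph{left} adjoint of $ev_0$, so your transfer remark also has the handedness backwards): since a fibrant $W$ is equivalent to $\mathrm{cosk}_0(W_0)$, one gets $\mathrm{Map}_{s\s}(X, W) \simeq \mathrm{Map}_{\s}(X_0, W_0)$ by adjunction, and the characterization of weak equivalences follows. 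Note finally that mapping into a \emph{constant} simplicial space does not see $X_0$ but rather the colimit of $X_\bullet$ over $\Delta^{op}$, so that step of your argument would fail even if the fibrant objects were as you claimed.
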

 
 \begin{proof}
  Let $\mathcal{L}$ be the following set of cofibrations
  $$ \mathscr{L} = \{ \partial F(n) \to F(n) : n \geq 1 \}.$$
  Then by \cref{The Localization for sS} there is a localized model structure on $\ss$ such that cofibrations are inclusions and fibrant objects are 
  Reedy fibrant simplicial spaces $K$ such that 
  $$ \Map(F(n),K) \to \Map(\partial F(n),K)$$
  is a Kan equivalence. 
  \par 
  Before we can finish the proof, we need a better understanding of the fibrant objects in this model structure. 
  A Reedy fibrant simplicial space $X$ is fibrant if and only if the map $X_n \to (X_0)^{n+1}$ is an equivalence.  
  Thus a Reedy fibrant simplicial space $X$ is fibrant in this model structure if and 
  only if the natural map $X \to (i_1)_*(X_0)$ is a Reedy equivalence.
  \par 
  In order to finish the proof we only need to prove that $f: X \to Y$ is a weak equivalence in this localized model structure if and only if  
  $(\pi_1)_*(f)$ is a Kan equivalence.
  Fix a map of simplicial spaces $X \to Y$ and a fibrant simplicial space $Z$. Then we have following diagram 
  of spaces:
  \begin{center}
   \begin{tikzcd}[row sep=0.5in, column sep=0.5in]
    \Map_{\ss}(Y,Z) \arrow[r] \arrow[d, "\simeq"] & \Map_{\ss}(X,Z) \arrow[d, "\simeq"] \\
    \Map_{\ss}(Y,(i_1)_*(Z_0)) \arrow[r] \arrow[d, "\cong"]  &  \Map_{\ss}(X,(i_1)_*(Z_0)) \arrow[d, "\cong"]  \\
    \Map_{\s}((\pi_1)_*Y,Z_0) \arrow[r] &  \Map_{\s}((\pi_1)_*X,Z_0) 
   \end{tikzcd}
   .
  \end{center}
  The vertical maps in top square are Kan equivalences as $Z$ is fibrant and so by the explanation in the previous paragraph 
  $Z \to (i_1)_*(Z_0)$ is an equivalence.
  The vertical maps in the bottom square are bijections because $(i_1)_*$ is the right adjoint to $(\pi_1)_*$ 
  as explained in \cref{rem:pi equal i}. 
  \par 
  Now the map $f: X \to Y$ is an equivalence in this localized model structure if and only if the top horizontal map is a Kan equivalence 
  for all fibrant objects $Z$
  (as the model structure is simplicial), which by the diagram above is equivalent to the bottom map being a Kan equivalence 
  for all Kan complexes $Z_0$. 
  \par 
  However $Z_0$ is an arbitrary Kan complex and the Kan model structure is simplicial. Hence this is equivalent to 
  $$(\pi_1)_*f: (\pi_1)_*X \to (\pi_1)_*Y$$
  being a Kan equivalence. 
 \end{proof}

 We are finally in a position to prove the existence of the chain of Quillen equivalences. 

 \begin{theone} \label{The Quillen equivalence of Kan model structures}
  The following three simplicially enriched adjunctions
  \begin{center}
  \begin{tikzcd}[row sep=0.5in, column sep=0.5in]
   \ss^{diag} \arrow[r, shift left =1, "\Diag^*"] & \s^{Kan} \arrow[l, shift left =1, "\Diag_*"] \arrow[r, "(\pi_1)^*", shift left =1]  
   & \ss^{Kan} \arrow[l, "(\pi_1)_*", shift left=1] \arrow[r, "(i_1)^*", shift left=1] & \s^{Kan} \arrow[l, "(i_1)_*", shift left=1] 
  \end{tikzcd}
 \end{center}
 are Quillen equivalences.
 \end{theone}

 \begin{proof}
  {\bf Quillen Adjunctions:}
  The fact that these three adjunctions are Quillen adjunctions is similar for all three cases and so we will combine the argument. 
  In all three cases we observe that the left adjoint $\Diag^*, (\pi_1)^*, (i_1)^*$ preserve inclusions and weak equivalences 
  and thus preserves cofibrations and trivial cofibrations, which imply they are left Quillen functors. 
  
  Indeed, the fact that they preserve inclusions is immediate. The fact that they preserve weak equivalences is an immediate observation 
  for $(\pi_1)^*$ and $(i_1)^*$ and for $\Diag^*$ follows from \cref{The Diagonal Model Structure}.
  
  {\bf Quillen Equivalence:}
  We move on to prove that these are Quillen equivalences. First we prove that $(\pi_1)^*$ and $(i_1)^*$ are derived inverses of each other.
  Notice that for every simplicial set $K$, $(i_1)^*(\pi_1)^*(K)=K$. So, in order to prove they are derived inverses it suffices to prove 
  there is a natural equivalence:
  $$(\pi_1)^*(i_1)^*(X) \xrightarrow{ \ \simeq \ } X$$
  This will then imply that $((\pi_1)^*, (\pi_1)_*)$ and $((i_1)^*, (i_1)_*)$ are Quillen equivalences and in fact are inverses of each other. 
  \par 
  We have $(\pi_1)^*(i_1)^*(X)_n = X_0$ and so 
  the natural map $(\pi_1)^*(i_1)^*(X) \to X$ is an equivalence in the Kan model structure on simplicial spaces. 
  \par 
  Next we move on to prove that $(\Diag^*,\Diag_*)$ is a Quillen adjunction. Based on \cref{Lemma For Quillen equiv}
  it suffices to prove that $\Diag^*$ reflects weak equivalences and the derived counit map $\Diag^*\Diag_*K \to K$ is a 
  Kan equivalence of simplicial sets. 
  \par 
  The fact that $\Diag^*$ reflects weak equivalences is part of \cref{The Diagonal Model Structure}. For the second part 
  we first observe that 
  $$(\Diag^* \Diag_*K)_n = (\Diag_*K)_{nn} = \Hom(\Delta[n] \times \Delta[n],K)$$
  and notice the counit map
  $$\Hom(\Delta[n] \times \Delta[n],K) \to \Hom(\Delta[n],K)$$
  is induced by the diagonal map $\Delta[n] \to \Delta[n] \times \Delta[n]$. 
  The diagonal map is a Kan equivalence and so the derived counit map is an equivalence.
\end{proof}
 
 \begin{remone}
  Composing the two Quillen equivalences $(\Diag^*,\Diag_*)$ and $((\pi_1)^*, (\pi_1)_*)$ we get a Quillen equivalence 
   \begin{center}
  \begin{tikzcd}[row sep=0.5in, column sep=0.5in]
  \ss^{diag} \arrow[r, shift left =1, "(\pi_1)^*\Diag^*"] & \ss^{Kan} \arrow[l, shift left =1, "\Diag_*(\pi_1)_*"]
   \end{tikzcd}
 \end{center}
 however, this Quillen equivalence is not the identity adjunction. Thus the Kan model structure and diagonal model structure on 
 simplicial spaces are Quillen equivalent, but not the same model structures.
 \end{remone}

 \subsection{Complete Segal Spaces} \label{Subsec Complete Segal Spaces}
 The Reedy model structure can be localized such that it models $(\infty,1)$-categories \cite{rezk2001css}.
 This is done in two steps.
 First we define {\it Segal spaces}.
 
 \begin{defone} \label{Def Segal Spaces}
 \cite[Page 11]{rezk2001css}
  A Reedy fibrant simplicial space $X$ is called a Segal space if the map
  $$ X_n \xrightarrow{\ \ \simeq \ \ } X_1 \underset{X_0}{\times} ... \underset{X_0}{\times} X_1 $$
  is a Kan equivalence for $n \geq 2$.
 \end{defone}
 Segal spaces come with a model structure, namely the {\it Segal space model structure}.
 
 \begin{theone} \label{The Segal Space Model Structure}
  \cite[Theorem 7.1]{rezk2001css}
  There is a simplicial closed model category structure on the category $\ss$ of simplicial spaces
  called the Segal space model category structure, and denoted $\ss^{Seg}$, with the following properties.
  \begin{enumerate}
   \item The cofibrations are precisely the monomorphisms.
   \item The fibrant objects are precisely the Segal spaces.
   \item The weak equivalences are precisely the maps $f$ such that $Map_{\ss}(f, W)$ is
   a weak equivalence of spaces for every Segal space $W$.
   \item A Reedy weak equivalence between any two objects is a weak equivalence in
   the Segal space model category structure, and if both objects are themselves
   Segal spaces then the converse holds.
   \item For two cofibrations $i$ and $j$, $i \square j$ is a cofibration, which is trivial if either of $i$ or $j$ are.
   \item The model structure is the localization of the Reedy model structure with respect to the maps
   $$ G(n) = F(1) \coprod_{F(0)} ... \coprod_{F(0)} F(1) \to F(n)$$
   for $n \geq 2$.
  \end{enumerate}
 \end{theone}
 A Segal space already has many characteristics of a category, such as objects and morphisms.

 \begin{defone} \label{def:mapping space Segal space}
 Let $W$ be a Segal space. Then an {\it object} $x$ in $W$ is a point in $W_0$.
 Moreover for two objects $x,y$ we define the {\it mapping space} as the pullback
  \begin{center}
   \begin{tikzcd}[row sep=0.5in, column sep=0.5in]
    \map_W(x,y) \arrow[r] \arrow[d] & W_1 \arrow[d] \\
    \Delta[0] \arrow[r, "(\{x\} \comma \{y\})"] & W_0 \times W_0
   \end{tikzcd}
  \end{center}
 \end{defone}
  
 Unlike classical categories, the mapping spaces of a Segal space do not come with a strict composition maps. 
 Rather there is a natural zig-zag. For more details see \cite[Section 5]{rezk2001css}.
 On the other hand we do get an actual homotopy category:
 
 \begin{defone} \label{def:homotopy category}
  Let $W$ be a Segal space. We define the {\it homotopy category} of $W$, denoted $\Ho W$, as the following category:
  \begin{enumerate}
   \item Objects of $\Ho W$ are objects of $W$
   \item For two objects $x,y$ 
   $$\Hom_{\Ho W}(x,y) = \pi_0(\map_W(x,y))$$
  \end{enumerate}
 \end{defone}
 This indeed gives us a category \cite[5.5]{rezk2001css}. Moreover, a morphism in $W$ is a weak equivalence if the morphism $[f]$ in $\Ho W$ is 
 an isomorphism.
 \par 
 Segal spaces do not give us a model of $(\infty,1)$-categories. For that we need {\it complete Segal spaces}.
 
 \begin{defone} \label{Def Of the E spaces}
  Let $J[n]$ be the fibrant replacement of $\Delta[n]$ in the Kan model structure \setref{item:Jn}.  
  We define the simplicial discrete space $E(n)$ as
  $E(n) = (\pi_2)^*J[n].$
  where $(\pi_2)^*$ was defined in \cref{not:bunch of functors}.
  In particular, $E(1)$ is the free invertible arrow.
 \end{defone}
 
 \begin{defone} \label{def:w hoequiv}
  Let $W$ be a Segal space. We define the space of weak equivalences $W_{hoequiv}$ as 
  $$W_{hoequiv} = \Map_{\s}(E(1),W).$$
  Notice the map $W_{hoequiv} \to W_1$ is an inclusion of path-components \cite[Lemma 5.8]{rezk2001css}.
 \end{defone}
 
 \begin{defone} \label{Def Complete Segal Spaces}
  A Segal space $W$ is called a {\it complete Segal space} if it satisfies one of the following equivalent conditions.
  \begin{enumerate}
   \item The inclusion map 
   $$W_0 \hookrightarrow W_{hoequiv}$$
   is a weak equivalence.
   \item The map 
    $$\ordered{0}^*: W_{hoequiv}= Map(E(1),W) \to Map(F(0),W) = W_0$$
    is a trivial Kan fibration.
    \item The map 
    $$\ordered{1}^*: W_{hoequiv}= Map(E(1),W) \to Map(F(0),W) = W_0$$
    is a trivial Kan fibration.
  \end{enumerate}
 \end{defone}

 Complete Segal spaces come with their own model structure, the {\it complete Segal space model structure}.
 
 \begin{theone} \label{The Complete Segal Space Model Structure}
  \cite[Theorem 7.2]{rezk2001css}
  There is a simplicial closed model category structure on the category $\ss$ of simplicial spaces, 
  called the complete Segal space model category structure, and denoted $\ss^{CSS}$, with the following properties.
  \begin{enumerate}
   \item The cofibrations are precisely the monomorphisms.
   \item The fibrant objects are precisely the complete Segal spaces.
   \item The weak equivalences are precisely the maps $f$ such that $Map_{\ss}(f, W)$ is
   a weak equivalence of spaces for every complete Segal space $W$.
   \item A Reedy weak equivalence between any two objects is a weak equivalence in
   the complete Segal space model category structure, and if both objects are themselves
   complete Segal spaces then the converse holds.
   \item For two cofibrations $i$ and $j$, $i \square j$ is a cofibration, which is trivial if either of $i$ or $j$ are. 
   \item The model structure is the localization of the Segal space model structure with respect to the map
   $$\ordered{0}: F(0) \to E(1).$$
  \end{enumerate}
 \end{theone}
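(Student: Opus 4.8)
The plan is to realize the complete Segal space model structure as a left Bousfield localization of the Segal space model structure of Theorem~\ref{The Segal Space Model Structure} at the single map $F(0) \to E(1)$. Since the Segal space model structure is cofibrantly generated, simplicial, and left proper (being a left Bousfield localization of the Reedy structure, which is proper), the general existence theorem for left Bousfield localizations applies and produces a new simplicial model structure on $s\s$ with the same cofibrations --- hence property~(1), the cofibrations are exactly the monomorphisms --- and with weak equivalences the $\{F(0)\to E(1)\}$-local equivalences, i.e.\ the maps $f$ for which $Map_{s\s}(f,W)$ is a weak equivalence of spaces for every local object $W$. Because the Segal space model structure is itself the Reedy localization at the maps $\{G(n) \to F(n) : n \geq 2\}$, composing the two localizations exhibits the complete Segal space model structure as the localization of the Segal space model structure at $F(0)\to E(1)$, which is property~(6).

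The next step is to identify the local, hence the fibrant, objects. In a left Bousfield localization the fibrant objects are precisely those objects that are fibrant in the original structure and are $S$-local. Fibrancy in the Segal space model structure means being a Segal space, and $S$-locality for $S = \{F(0)\to E(1)\}$ means --- since $F(0)\to E(1)$ is a monomorphism and a Segal space is Reedy fibrant, so that derived mapping spaces agree with honest ones --- that
$$Map_{s\s}(E(1),W) \xrightarrow{\ \ \simeq \ \ } Map_{s\s}(F(0),W) = W_0$$
is a weak equivalence of spaces. This is exactly condition~(1) of Definition~\ref{Def Complete Segal Spaces}, so the fibrant objects are precisely the complete Segal spaces, giving property~(2). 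Property~(3) is then the definition of the localized weak equivalences together with the standard reduction showing it suffices to test locality against the fibrant local objects, i.e.\ against complete Segal spaces.

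For property~(4): a Reedy equivalence is a weak equivalence in the Segal space model structure by Theorem~\ref{The Segal Space Model Structure}(4), and left Bousfield localization only enlarges the class of weak equivalences, so every Reedy equivalence is a complete Segal space equivalence. Conversely, a complete Segal space equivalence between two complete Segal spaces is a weak equivalence between fibrant objects of the localized structure, hence a weak equivalence in the Segal space model structure by the standard fact that a left Bousfield localization does not change which maps between fibrant objects are weak equivalences; since complete Segal spaces are in particular Segal spaces, Theorem~\ref{The Segal Space Model Structure}(4) upgrades this to a Reedy equivalence.

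The remaining point~(5), compatibility with the Cartesian closed structure, is where the genuine work lies, since the general localization machinery does not preserve this property for free: one must check that the localization of the (already Cartesian) Segal space model structure at $F(0)\to E(1)$ is again Cartesian. By the usual criterion it suffices to show that for every simplicial space $C$ the map $C = F(0)\times C \to E(1)\times C$ is a complete Segal space equivalence --- equivalently, that the pushout--product of $F(0)\to E(1)$ with any monomorphism is a trivial cofibration in the localized structure. Establishing this requires a concrete analysis of $E(1)\times C$ using the explicit description of $E(1)$ in terms of the Kan complexes $J[1]$ from Definition~\ref{Def Of the E spaces} (for instance by factoring $F(0)\to E(1)$ through suitable pushouts of copies of $F(0)\to E(1)$ and $G(n)\to F(n)$, or by invoking the general theory of Cartesian localizations). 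I expect this Cartesian-stability check, rather than any of the formal localization steps, to be the main obstacle; everything else is a direct application of the general theory of left Bousfield localizations.
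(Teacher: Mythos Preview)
The paper does not prove this theorem: it is stated with a citation to \cite[Theorem 7.2]{Re01} and no proof is given. So there is no ``paper's own proof'' to compare against; the result is being quoted as background from Rezk's original work.

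Your outline is essentially the standard argument (and is, in broad strokes, how Rezk proceeds): construct the model structure as a left Bousfield localization, identify the local objects, and then verify Cartesian compatibility by hand. Your assessment that item~(5) is where the real content lies is correct.

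One point to flag: in your treatment of item~(4) you prove the converse direction only for \emph{complete} Segal spaces, via the general fact that localization does not change weak equivalences between fibrant objects. But the statement as written in the paper says the converse holds when both objects are merely \emph{Segal spaces}. That is a genuinely stronger claim and does not follow from the formal localization machinery alone; it requires Rezk's analysis of Dwyer--Kan equivalences and the completion functor (see \cite[Sections 7 and 14]{Re01}). Either the paper intends ``complete Segal spaces'' here (matching the pattern of Theorem~\ref{The Segal Space Model Structure}(4), where the fibrant objects are named), in which case your argument is fine, or it really means Segal spaces, in which case your proof of~(4) has a gap that needs the additional input from Rezk's paper.
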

 
 A complete Segal space is a model for an $(\infty,1)$-category.
 For a better understanding of complete Segal spaces see \cite[6]{rezk2001css} and for a comparison with other models see 
 \cite{joyaltierney2007qcatvssegal}, \cite{bergner2010survey}.
 
\section{Left Fibrations and the Covariant Model Structure} \label{Sec Left Fibrations and the Covariant Model Structure}
This section is focused on the study of left fibrations. We first focus on various characterizations of 
left fibrations (\cref{subsec:many faces left fib}). Then we show that left fibrations can be seen as fibrant objects in a model structure, 
the covariant model structure (\cref{subsec:covariant model structure}). Finally, we do a careful analysis of left fibrations over Segal spaces 
(\cref{subsec:yoneda lemma Segal spaces}).

 \begin{remone}
 {\it Historical note on left fibrations for simplicial spaces:} 
 Left fibrations for complete Segal spaces were first considered by Charles Rezk, 
 motivated by his paper on complete Segal spaces \cite{rezk2001css}, 
 however, he never published those ideas. 
 \par 
 The first record of left fibrations for Segal spaces can be found in the work of de Brito \cite[0.1]{debrito2018leftfibration} and 
 Kazhdan-Varshavsky \cite[Definition 2.1.1]{kazhdanvarshvsky2014yoneda}, 
 where both authors (independently) give the same definition of a left fibration for Segal spaces. 
 \par 
 The definition of left fibration given here was suggested to the author by Charles Rezk and 
 generalizes those definitions from Segal spaces to an arbitrary simplicial space.
 \end{remone}

\subsection{The Many Faces of Left Fibrations} \label{subsec:many faces left fib}
 In this section we first define left fibration (\cref{def:left fibration}) and then prove it can be characterized in several alternative ways:
 \cref{lemma:left fibration triv Kan fib}, 
 \cref{Lemma Other definition of Left Fibration}, 
 \cref{prop:joyal tierney left fibrations}, 
 \cref{Lemma Local def of Left Fib}. 
 There is one final characterization of left fibrations, \cref{lemma:left fib as exp}, which we relegate to the next section.
 \medskip 
 
 We want to generalize the definition of a discrete Grothendieck opfibration (\cref{def:discrete Groth fib}) to simplicial spaces. 
 The guiding principle towards a working definition is the following idea:
 
 \medskip
 \begin{center}
  {\it uniqueness in set theory $\Leftrightarrow$ contractibility in homotopy theory}
 \end{center}
 \medskip
 
 Thus we need to find an appropriate contractibility condition. 
 Using our intuition from Segal spaces for a given simplicial space $X$ we should think of the 
 space $X_0$ as the {\it space of objects}, $X_1$ as the {\it space of morphisms} and the simplicial map $s: X_1 \to X_0$ 
 as map that takes a morphism to its source. 
 \par 
 This motivates following definition:
 
 \begin{defone} \label{def:left fibration}
  A map of simplicial spaces $p: L \to X$ is called a left fibration if it is a Reedy fibration such that 
  the following square is a homotopy pullback square
  \begin{equation} \label{eq:left fib pullback}
   \pbsq{L_n}{L_0}{X_n}{X_0}{\ordered{0}^*}{p_n}{p_0}{\ordered{0}^*}
  \end{equation}
 where the horizontal maps come from the map $\ordered{0}: [0] \to [n]$ taking the point to $0 \in [n]$.
 \end{defone}
 
  \begin{remone} \label{Rem Left fibrations like functors} 
   As we observed in \cref{lemma:discrete Groth fib lifting}, Grothendieck opfibrations correspond to functors $\C \to \set$.
   Given that left fibrations are the homotopical analogue of Grothendieck opfibrations they are expected to model 
   functors valued in spaces. We will in fact prove this statement for the specific case where $X= N\C$ 
   (\cref{the:simplicial groth construction}) and use this idea as a guide towards studying left fibrations over general simplicial spaces.
   \par 
   The proof of the general case can be found for quasi-categories in \cite[Chapter 2]{lurie2009htt}, \cite{heutsmoerdijk2016leftfibrationii}.  
  \end{remone}
 
 Let us start with a simple, alternative way of characterizing left fibrations.
 
 \begin{lemone} \label{lemma:left fibration triv Kan fib}
  Let $p: Y \to X$ be a Reedy fibration. The following are equivalent:
  \begin{enumerate}
   \item $p$ is a left fibration.
   \item For each $n \geq 0$, the map 
   $$(p_n , \ordered{0}^*): Y_n \to X_n \underset{X_0}{\times} Y_0$$
   is a Kan equivalence.
   \item For each $n \geq 0$, the map 
   $$(p_n , \ordered{0}^*): Y_n \twoheadrightarrow X_n \underset{X_0}{\times} Y_0$$
   is a trivial Kan fibration.
  \end{enumerate}
 \end{lemone}
 
 \begin{proof}
  $(1 \Leftrightarrow 2)$
  This follows from the definition of a homotopy pullback.
  
  \medskip 
  
  $(2 \Leftrightarrow 3)$
  The map $\ordered{0}: F(0) \to F(n)$ is a cofibration, which implies that 
  $$\Map_{\ss}(F(n), Y) \to \Map_{\ss}(F(0), Y) \underset{\Map_{\ss}(F(0), X)}{\times} \Map_{\ss}(F(n), X)$$
  or, equivalently, the map
  $$Y_n \to X_n \underset{X_0}{\times} Y_0$$
  is a Kan fibration.
  Thus it is a weak equivalence if and only if it is a trivial Kan fibration.
 \end{proof}
 
 Next we give an alternative, inductive characterization of left fibration. 
 
 \begin{lemone} \label{Lemma Other definition of Left Fibration}
  Let $p: Y \to X$ be a Reedy fibration. The following two are equivalent:
  \begin{enumerate}
   \item The commutative square
   \begin{center}
    \pbsq{Y_n}{Y_0}{X_n}{X_0}{\ordered{0}^*}{p_n}{p_0}{\ordered{0}^*}
   \end{center}
    is a homotopy pullback square for all $n \geq 0$, meaning $p$ is a left fibration.
   \item The commutative square 
   \begin{center}
    \pbsq{Y_n}{Y_{n-1}}{X_n}{X_{n-1}}{\ordered{0,...,n-1}^*}{p_n}{p_{n-1}}{\ordered{0,...,n-1}^*}
   \end{center}
   is a homotopy pullback square for all $n \geq 0$.  
  \end{enumerate}
 \end{lemone}
 \begin{proof}
  We have the following diagram:
   \begin{center}
    \begin{tikzcd}[row sep = 0.5in, column sep = 0.5in]
     Y_n \arrow[r, "\ordered{0,...,n-1}^*"] \arrow[d, "p_n"] & Y_{n-1} \arrow[r, "\ordered{0}^*"] \arrow[d, "p_{n-1}"] & Y_0 \arrow[d, "p_0"] \\
     X_n \arrow[r, "\ordered{0,...,n-1}^*"] & X_{n-1} \arrow[r, "\ordered{0}^*"] & X_0 
    \end{tikzcd}
    .
   \end{center}
  $( 1 \Rightarrow 2)$ In this case the rectangle and the right square is a homotopy pullback and therefore the left hand square is also a 
  homotopy pullback.
  
  \medskip 
  
  $(2 \Rightarrow 1)$ For this case we use induction. The case $n=1$ is clear. If it is true for $n-1$ then this means that in the diagram above
  the right hand square is a homotopy pullback. By assumption the left hand square is a 
  homotopy pullback and so the whole rectangle has to be a homotopy pullback and we are done.
 \end{proof}
 
 Next we give a characterization of left fibrations via lifting conditions.
 
 \begin{propone} \label{prop:joyal tierney left fibrations}
  Let $p: Y \to X$ be a Reedy fibration. Then the following are equivalent:
  \begin{enumerate}
   \item $p$ is a left fibration.
   \item $p$ satisfies the right lifting property with respect to maps of the form 
   $$(\ordered{0}: F(0) \to F(n)) \square (\partial \Delta[l] \to \Delta[l]).$$
   \item The map of simplicial sets $(\pi_2)^*(\exp{\ordered{0}:F(0) \to F(n)}{p})$ (where $\pi_2$ was introduced in \cref{not:bunch of functors}) 
   satisfies the right lifting property with respect to maps of the form 
   $$\ordered{0}: \Delta[0] \to \Delta[l].$$ 
  \end{enumerate}
 \end{propone}
 
 \begin{proof}
   By \cref{lemma:left fibration triv Kan fib},
   the map $p: Y \to X$ is a left fibration if and only if
   $$(Y_n \twoheadrightarrow X_n \times_{X_0} Y_0) = (\pi_1)_*(\exp{\ordered{0}:F(0) \to F(n)}{p: Y \to X})$$ 
   is a trivial fibration, where $\pi_1$ was introduced in \cref{not:bunch of functors}. 
   This is equivalent to $(\pi_1)_*(\exp{\ordered{0}:F(0) \to F(n)}{p: Y \to X})$ having the right lifting property with 
   respect to the inclusion maps $\partial \Delta[l] \to \Delta[l]$. Using the adjunction $((\pi_1)^*, (\pi_1)_*)$, this is equivalent to  
   $\exp{\ordered{0}:F(0) \to F(n)}{p: Y \to X}$ having the right lifting property with respect to
   $(\pi_1)^*(\partial \Delta[l]) \to (\pi_1)^*\Delta[l]$, which by our notation convention (\cref{rem:notation convention s in ss}) 
   we denote by $\partial \Delta[l] \to \Delta[l]$.
   \par 
   Thus, $p:Y \to X$ is a left fibration is and only if 
   $$\partial \Delta[l] \to \Delta[l] \pitchfork \exp{\ordered{0}: F(0) \to F(n)}{ p: Y \to X}.$$  
   Now using \cref{prop:joyal tierney lifting} with the set of morphisms:
   \begin{itemize}
    \item $\mathcal{A} = \{ \ordered{0}: F(0) \to F(n) : n \geq 0 \} $
    \item $\mathcal{B} = \{ \partial \Delta[l] \to \Delta[l] : l \geq 0 \}$
    \item $\mathcal{L} = \{ \text{left fibrations} \}$
   \end{itemize}
   we have 
   $$ \mathcal{A} \pitchfork \exp{\mathcal{B}}{\mathcal{L}} \Leftrightarrow \mathcal{A} \square \mathcal{B} \pitchfork \mathcal{L} \Leftrightarrow
   \mathcal{B} \pitchfork \exp{\mathcal{A}}{\mathcal{L}}.$$
   Hence we are done.
 \end{proof}

 The fact that left fibrations are given via a right lifting property has several formal consequences.

  \begin{lemone} \label{Lemma Composition preserves Left fibrations}
  Let $f:Y \to X$ and $g:Z \to Y$ be two Reedy fibrations.
  \begin{enumerate}
   \item If $f$ and $g$ are left fibrations then $fg$ is also a left fibration.
   \item If $f$ and $fg$ are left fibrations then $g$ is also a left fibration.
  \end{enumerate}
 \end{lemone}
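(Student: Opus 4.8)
The plan is to deduce both parts from the pasting lemma for homotopy pullback squares, applied level by level. For each $n \geq 0$ I would consider the commutative diagram
\begin{center}
\begin{tikzcd}[row sep=0.4in, column sep=0.4in]
Z_n \arrow[r] \arrow[d, "g_n"'] & Z_0 \arrow[d, "g_0"] \\
Y_n \arrow[r] \arrow[d, "f_n"'] & Y_0 \arrow[d, "f_0"] \\
X_n \arrow[r] & X_0
\end{tikzcd}
\end{center}
in which all horizontal maps are induced by $0:[0]\to[n]$. By Definition \ref{Def Of Left Fibration}, the top square is a homotopy pullback for every $n$ precisely when $g$ is a left fibration, the bottom square is a homotopy pullback for every $n$ precisely when $f$ is a left fibration, and the outer rectangle is a homotopy pullback for every $n$ precisely when $fg$ is a left fibration. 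Note also that $fg$ is a Reedy fibration, being a composite of two Reedy fibrations, and that since $f$ and $g$ are Reedy fibrations all the vertical maps $g_n,g_0,f_n,f_0$ (and their composites) are Kan fibrations, so in each case the ordinary pullback already computes the homotopy pullback.

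For part (1): if $f$ and $g$ are left fibrations, then the top and bottom squares are homotopy pullbacks for every $n$, hence by the pasting lemma the outer rectangle is a homotopy pullback for every $n$; combined with the fact that $fg$ is a Reedy fibration, this shows $fg$ is a left fibration. For part (2): if $f$ and $fg$ are left fibrations, then the bottom square and the outer rectangle are homotopy pullbacks for every $n$, hence by the other direction of the pasting lemma the top square is a homotopy pullback for every $n$; since $g$ is a Reedy fibration by hypothesis, this shows $g$ is a left fibration.

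The only point requiring a little care is the second (``cancellation'') direction of the pasting lemma for homotopy pullbacks, where one concludes that the inner square is a homotopy pullback from the outer rectangle together with the other inner square. I do not expect this to be a genuine obstacle: it is standard once every map in sight is a Kan fibration, which the Reedy fibrancy hypotheses guarantee, and it also follows formally from right properness of the Kan model structure on spaces. So the argument is essentially just this bookkeeping, and I expect no further difficulty.
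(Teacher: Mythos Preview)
Your proposal is correct and follows essentially the same approach as the paper: the paper draws exactly this stacked diagram and invokes the pasting/cancellation property of homotopy pullback squares to handle both parts. Your write-up is slightly more explicit (you note that $fg$ is a Reedy fibration and that the vertical maps are Kan fibrations so strict pullbacks compute homotopy pullbacks), but the argument is the same.
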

 
  \begin{lemone} \label{Lemma Pullback preserves Left fibrations}
  The pullback of a left fibration is a left fibration.
 \end{lemone}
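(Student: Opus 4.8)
The plan is to verify the two defining conditions of a left fibration directly. Let $f\colon Y \to X$ be a left fibration and let $g\colon X' \to X$ be any map of simplicial spaces; form the pullback $Y' = X' \times_X Y$ with projection $f'\colon Y' \to X'$. First, since the Reedy model structure on $s\s$ is a model category, its fibrations are stable under base change, so $f'$ is again a Reedy fibration. It then remains to check, for each $n \geq 0$, that the comparison map $Y'_n \to X'_n \times_{X'_0} Y'_0$ is a Kan equivalence; this is exactly the reformulation of the homotopy pullback condition recorded right after Definition \ref{Def Of Left Fibration}.

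The key input is the Remark following Definition \ref{Def Of Left Fibration}: because $f$ is a Reedy fibration, the comparison map $Y_n \to X_n \times_{X_0} Y_0$ is automatically a Kan fibration, and since $f$ is a left fibration it is moreover a \emph{trivial} Kan fibration. Next I would unwind the levelwise pullbacks. As $Y' = X' \times_X Y$ is formed levelwise, $Y'_m = X'_m \times_{X_m} Y_m$ for every $m$; in particular $Y'_0 = X'_0 \times_{X_0} Y_0$. Consequently
\[
X'_n \underset{X'_0}{\times} Y'_0 \;=\; X'_n \underset{X'_0}{\times} \bigl( X'_0 \underset{X_0}{\times} Y_0 \bigr) \;\cong\; X'_n \underset{X_0}{\times} Y_0 \;\cong\; X'_n \underset{X_n}{\times} \bigl( X_n \underset{X_0}{\times} Y_0 \bigr),
\]
where the last identification uses that $g$ commutes with the maps induced by $0\colon[0]\to[n]$. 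Under these canonical identifications, the comparison map $Y'_n = X'_n \times_{X_n} Y_n \to X'_n \times_{X'_0} Y'_0$ is precisely the base change along $g_n\colon X'_n \to X_n$ of the map $Y_n \to X_n \times_{X_0} Y_0$. Since trivial Kan fibrations are stable under pullback, it follows that $Y'_n \to X'_n \times_{X'_0} Y'_0$ is a trivial Kan fibration, hence a Kan equivalence, and therefore $f'$ is a left fibration.

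This is essentially a diagram chase, so I do not anticipate a genuine obstacle; the only point needing some care is confirming that the comparison map for $f'$ really is the pullback of the comparison map for $f$, which one checks by contemplating the cube whose back face is the defining square of $f$ at level $n$ and whose four side faces are the strict levelwise pullback squares defining $Y' = X' \times_X Y$. An alternative route would phrase everything through a pasting lemma for homotopy pullbacks, using that Reedy fibrations are levelwise Kan fibrations so that the relevant strict pullbacks also compute homotopy pullbacks; the trivial-fibration shortcut above sidesteps that bookkeeping entirely.
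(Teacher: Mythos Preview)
Your proof is correct and follows essentially the same cube-based strategy as the paper: the paper draws the cube whose front face is the left-fibration square for $f$, whose side faces are the levelwise pullback squares defining $Y'=X'\times_X Y$, and concludes that the back face is a (homotopy) pullback. Your version unpacks this a bit more explicitly---using the trivial-Kan-fibration reformulation from the Remark after Definition~\ref{Def Of Left Fibration} and stability of trivial fibrations under base change---and you also spell out the Reedy-fibration part, which the paper leaves implicit; but the underlying argument is the same.
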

  
 We can use the pullback stability of left fibrations to give several local characterizations of left fibrations.
 
 \begin{lemone} \label{Lemma Local def of Left Fib} 
  Let $p: L \to X$ be a Reedy fibration. Then the following are equivalent:
  \begin{enumerate}
   \item $p$ is a left fibration.
   \item For every map $F(n) \times \Delta[l] \to X$ the pullback $L \times_X (F(n) \times \Delta[l]) \to F(n) \times \Delta[l]$ is a 
   left fibration.
   \item For every map $F(n) \to X$ the pullback $L \times_X F(n) \to F(n)$ is a left fibration.
  \end{enumerate}
 \end{lemone}
 
 \begin{proof}
  $(1 \Rightarrow 3)$ 
  This follows from \cref{Lemma Pullback preserves Left fibrations}.
  
  \medskip 
  
  $(2 \Rightarrow 1)$
  By \cref{prop:joyal tierney left fibrations} it suffices to prove that every map has the right lifting property with respect to 
  maps $(F(0) \to F(n)) \square (\partial \Delta[l] \to \Delta[l])$. Now we have following diagram 
  \begin{center}
   \begin{tikzcd}[row sep=0.5in, column sep=0.5in]
    \ds F(n) \times \partial \Delta[l] \coprod_{F(0) \times \partial \Delta[l]} \Delta[l] \arrow[d] \arrow[r]  & 
    f^*L \arrow[r] \arrow[d] & L \arrow[d, "p"] \\ 
    F(n) \times \Delta[l] \arrow[r, "\id"] \arrow[ur, dashed] & F(n) \times \Delta[l] \arrow[r, "f"] & X
   \end{tikzcd}
   .
  \end{center}
  By assumption $f^*L \to F(n) \times \Delta[l]$ is a left fibration, which means the lift exists. 
  
  \medskip 
  
  $(3 \Rightarrow 2)$
  Fix a map $f: F(n) \times \Delta[l] \to X$. Then $f^*p: L \to F(n) \times \Delta[l]$ is a Reedy fibration and so in order to 
  show it is a left fibration we need to show it satisfies the homotopy pullback condition in \ref{eq:left fib pullback}. 
  \par 
  The map $\ordered{0}: \Delta[0] \to \Delta[l]$ gives us following pullback square
  \begin{center}
   \begin{tikzcd}[row sep=0.5in, column sep=0.5in]
    (f \circ (id \times 0))^*(L) \arrow[r, "\simeq"] \arrow[d] & f^*L \arrow[d] \\
    F(n) \arrow[r, "id \times \ordered{0}", "\simeq"'] & F(n) \times \Delta[l] 
   \end{tikzcd}
   .
  \end{center}
  The bottom map is a Reedy equivalence, which implies the top map is also a Reedy equivalence. But by assumption 
  $(f \circ (id \times \ordered{0}))^*(L)$ is a left fibration. This implies that $f^*L$ is also a left fibration. 
 \end{proof}
  
 Using the same argument as in the previous proof we can prove the analogous statement about diagonal fibrations
 that will become useful later on.
 
 \begin{corone} \label{cor:local def of diag Fib}
   Let $p: L \to X$ be a Reedy fibration. Then the following are equivalent:
  \begin{enumerate}
   \item $p$ is a diagonal fibration.
   \item For every map $F(n) \times \Delta[l] \to X$ the pullback $L \times_X (F(n) \times \Delta[l]) \to F(n) \times \Delta[l]$ is a 
   diagonal fibration.
   \item For every map $F(n) \to X$ the pullback $L \times_X F(n) \to F(n)$ is a diagonal fibration.
  \end{enumerate}
 \end{corone}

\subsection{The Covariant Model Structure} \label{subsec:covariant model structure}
 Let $X$ be a simplicial space. In this subsection we define a model structure on the over-category $\ss_{/X}$,
 the {\it covariant model structure}, which has fibrant objects left fibrations over $X$ (\cref{The Covariant Model Structure}).
 We end the subsection by giving a useful criterion for determining covariant equivalences, generalizing deformation retracts from 
 classical homotopy theory (\cref{the:def retract covariant equiv}).
 
 \begin{theone} \label{The Covariant Model Structure}
  Let $X$ be a simplicial space. 
  There is a unique simplicial left proper model structure on the over-category $\ss_{/X}$, called the covariant model structure
  and denoted by $(\ss_{/X})^{cov}$, which satisfies the following conditions:
  \begin{enumerate}
   \item The fibrant objects are the left fibrations over $X$.
   \item Cofibrations are monomorphisms.
   \item A map $f: A \to B$ over $X$ is a weak equivalence if 
   $$\Map_{\ss_{/X}}(B,L) \to \Map_{\ss_{/X}}(A,L)$$ 
   is a Kan equivalence for every left fibration $L \to X$.
   \item A weak equivalence (covariant fibration) between fibrant objects is a level-wise equivalence (Reedy fibration). 
  \end{enumerate}
 \end{theone}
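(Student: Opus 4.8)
The plan is to obtain the covariant model structure as a left Bousfield localization of the Reedy model structure on $s\s_{/X}$, so that most of the structural properties come for free from the general theory of localizations (which, per the Background, we may invoke). First I would identify the localizing set: since we want the fibrant objects to be exactly the left fibrations over $X$, the natural candidate is the set $\mathscr{L}$ of maps of the form $(F(0) \to F(n)) $ pushed into $s\s_{/X}$ along all maps $F(n) \to X$ — more precisely the maps $F(0) \underset{F(n)}{\times}\! F(n) \to F(n)$ over $X$, indexed by $n \geq 0$ and by $F(n) \to X$, with $F(0) \to F(n)$ the inclusion of the initial vertex. Because $s\s_{/X}$ with its Reedy (slice) model structure is a left proper, combinatorial (indeed cofibrantly generated) simplicial model category, Bousfield localization at this (essential) set exists and is again left proper, simplicial, with cofibrations the monomorphisms; this gives (1) and (3) and the uniqueness clause immediately. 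Part (4) is the definition of the local weak equivalences in the localized structure, once one checks $\mathscr{L}$ is (up to equivalence) a set — which it is, since $F(n)$ is a compact object and the maps $F(n) \to X$ only matter up to the slice, so one may replace $X$ by a small skeleton; alternatively one localizes at the single class generated by all $A \to B$ that are Reedy-trivial-cofibrations pushed from $\mathscr{L}$.

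Next I would verify (2): the fibrant objects of the localization are precisely the Reedy-fibrant objects $L \to X$ that are $\mathscr{L}$-local, and a standard lifting/adjunction argument identifies these with the left fibrations. Concretely, $L \to X$ Reedy fibrant is $\mathscr{L}$-local iff for every $F(n) \to X$ the map $\mathrm{Map}_{/X}(F(n),L) \to \mathrm{Map}_{/X}(F(0),L)$ is a Kan equivalence; unwinding the slice mapping spaces and using the pullback description $\mathrm{Map}_{/X}(F(n),L) \simeq \{F(n)\to X\}\times_{X_n} L_n$, this says exactly that $L_n \to X_n \times_{X_0} L_0$ is a Kan equivalence fiberwise over each simplex of $X_n$, hence (by right properness of the Kan model structure) a Kan equivalence — which is Definition \ref{Def Of Left Fibration}. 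This is essentially the computation already performed in Lemma \ref{Lemma Local def of Left Fib}, so I would cite that.

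Finally, part (5): a covariant weak equivalence between left fibrations is a Reedy (level-wise) equivalence, and a covariant fibration between left fibrations is a Reedy fibration. For the weak-equivalence half, this is the general localization fact that a weak equivalence between local (fibrant) objects is a weak equivalence in the original model structure — here the Reedy structure, whose weak equivalences are the level-wise Kan equivalences. For the fibration half: a covariant fibration with fibrant target is in particular a Reedy fibration (covariant fibrations are always Reedy fibrations, being closed under the smaller generating set), and conversely a Reedy fibration between left fibrations is a covariant fibration because it has the right lifting property against the generating covariant trivial cofibrations — which, between fibrant objects, reduces via a retract/lifting argument to Reedy trivial cofibrations.

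The main obstacle I anticipate is not any single step in isolation but making the identification in (2) clean: one must be careful that the localizing set $\mathscr{L}$ really is a set (not a proper class) despite being indexed by all maps $F(n)\to X$, and that $\mathscr{L}$-locality over $X$ unwinds to the \emph{homotopy} pullback condition and not merely a strict one. Both points are handled by the compactness of $F(n)$ together with right properness of the Kan model structure, exactly as in the proofs of Theorems \ref{The Diagonal Model Structure} and \ref{The Kan Model Structure}; so in the write-up I would follow that template, reducing everything to the basic lifting argument plus Lemma \ref{Lemma Local def of Left Fib}.
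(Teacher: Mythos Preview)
Your proposal is correct and follows essentially the same route as the paper: localize the induced Reedy model structure on $s\s_{/X}$ at the set $\mathcal{L} = \{F(0) \hookrightarrow F(n) \to X\}$, let the general theory of Bousfield localization (the paper's Theorem \ref{The Localization for sS}) supply conditions (1), (3), (4), (5), and then verify (2) by unwinding $\mathcal{L}$-locality fiberwise. Two small corrections: the set-theoretic worry is a non-issue, since for each $n$ the maps $F(n) \to X$ form a genuine set (so $\mathcal{L}$ is a set on the nose, no skeleton needed); and the lemma you want for the identification in (2) is not Lemma \ref{Lemma Local def of Left Fib} but rather the fiberwise-equivalence criterion for Kan fibrations (the paper's Corollary \ref{Cor Fiberwise equiv for Kan fib}), which is exactly what turns ``$Map_{/X}(F(n),L) \to Map_{/X}(F(0),L)$ is an equivalence for every $F(n)\to X$'' into ``$L_n \to X_n \times_{X_0} L_0$ is an equivalence''.
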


 \begin{proof}
  Let $\mathcal{L}$ be the collection of maps of the following form
  $$ \mathcal{L} = \{  F(0) \xrightarrow{ \ \ordered{0} \ } F(n) \to X \}.$$
  Note that $\mathcal{L}$ is a set of cofibrations in $\ss_{/X}$ with the Reedy model structure.
  This allows us to use the theory of Bousfield localizations (\cref{The Localization for sS}) 
  with respect to $\mathcal{L}$ on the category $\ss_{/X}$.
  The resulting model structure immediately satisfies all the conditions above and in particular 
  the fibrant objects are precisely the left fibrations by \cref{lemma:fibrant objects in localized model structure}.
 \end{proof}
 
 \begin{remone}
  This model structure is also constructed in \cite[Proposition 1.10]{debrito2018leftfibration} for the particular 
  case where the base $X$ is a Segal space. 
 \end{remone}

 We can actually say more about the fibrations in the covariant model structure. 
 
 \begin{lemone} \label{lemma:fib between left fib}
  Let $p: L \to X$ and $q: L' \to X$ be two left fibrations. A map $f: L \to L'$ over $X$ is a fibration in the covariant 
  model structure if and only if it is a left fibration. 
 \end{lemone}

 \begin{proof}
  As $p$ and $q$ are fibrant, $f$ is a fibration if and only if it is a Reedy fibration (\cref{The Covariant Model Structure}). 
  The statement now follows from \cref{Lemma Composition preserves Left fibrations} as $qf = p$.
 \end{proof}

 Note the covariant model structure behaves well with respect to base change.
 
 \begin{theone} \label{The Base change covar}
  Let $f: X \to Y$ be map of simplicial spaces. Then the following adjunction 
  \begin{center}
   \adjun{(\ss_{/X})^{cov}}{(\ss_{/Y})^{cov}}{f_!}{f^*}
  \end{center}
  is a Quillen adjunction, which is a Quillen equivalence if $f$ is a Reedy equivalence.
  Here $f_!$ is the composition map and $f^*$ is the pullback map.
 \end{theone}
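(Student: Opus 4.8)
The plan is to verify that $(f_!, f^*)$ is a Quillen adjunction by checking that $f_!$ preserves cofibrations and trivial cofibrations, since this is the most convenient of the equivalent criteria here: both covariant model structures are cofibrantly generated (being Bousfield localizations of Reedy model structures, by Theorem~\ref{The Covariant Model Structure}), so it suffices to check $f_!$ takes the generating cofibrations and generating trivial cofibrations of $(s\s_{/X})^{cov}$ into cofibrations and weak equivalences, respectively, of $(s\s_{/Y})^{cov}$. First I would record that $f_! : s\s_{/X} \to s\s_{/Y}$ is just post-composition with $f$, so on underlying simplicial spaces it is the identity; in particular it preserves monomorphisms, hence preserves cofibrations in either the Reedy or the covariant model structure. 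That disposes of cofibrations, and also shows $(f_!, f^*)$ is a Quillen adjunction between the two Reedy model structures $s\s_{/X}$ and $s\s_{/Y}$, a fact I would state as a stepping stone.

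The substance is the trivial cofibrations. The generating trivial cofibrations of the localized (covariant) model structure on $s\s_{/X}$ are, up to the usual saturation, the maps used to form the localization together with the Reedy generating trivial cofibrations. Applying $f_!$ to a Reedy trivial cofibration gives a Reedy trivial cofibration over $Y$ (again because $f_!$ is the identity on underlying simplicial spaces), which is in particular a covariant equivalence. So the one genuinely new thing to check is that $f_!$ sends each localizing map $F(0) \hookrightarrow F(n) \to X$ — more precisely the corresponding generating trivial cofibration of $(s\s_{/X})^{cov}$ — to a covariant equivalence over $Y$. But $f_!$ applied to $\big(F(0) \hookrightarrow F(n) \to X\big)$ is exactly $\big(F(0) \hookrightarrow F(n) \xrightarrow{f} Y\big)$, which is one of the maps $\mathcal L$ defining the covariant model structure on $s\s_{/Y}$, hence a covariant equivalence there by construction. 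Thus $f_!$ carries the generating trivial cofibrations of $(s\s_{/X})^{cov}$ to covariant equivalences (and they remain monomorphisms), and since $f_!$ is a left adjoint it preserves all colimits, so it preserves the whole saturated class of trivial cofibrations.

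I would then conclude, by the standard criterion (recorded in the appendix of results on model categories referenced in the excerpt), that $(f_!, f^*)$ is a Quillen adjunction for the covariant model structures. The main obstacle, such as it is, is bookkeeping: being careful that the ``generating trivial cofibrations'' of a left Bousfield localization are as I described — namely one may take them to be (a set representing) the Reedy trivial cofibrations together with the horns-on-mapping-cylinders of the localizing set $\mathcal L$ — and that $f_!$, being a left adjoint that is moreover the identity on underlying bisimplicial sets, visibly sends the pushout-product/mapping-cylinder constructions over $X$ to the corresponding ones over $Y$. An alternative, cleaner route that avoids discussing generating trivial cofibrations explicitly is to argue directly on weak equivalences: for $A \to B$ a covariant equivalence over $X$ and $L \to Y$ any left fibration, one has a natural isomorphism $map_{s\s_{/Y}}(f_! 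A, L) \cong map_{s\s_{/X}}(A, f^* L)$, and $f^* L \to X$ is a left fibration by Lemma~\ref{Lemma Pullback preserves Left fibrations}; hence $map_{s\s_{/X}}(B, f^*L) \to map_{s\s_{/X}}(A, f^*L)$ is an equivalence, so $map_{s\s_{/Y}}(f_!B, L) \to map_{s\s_{/Y}}(f_!A, L)$ is an equivalence, i.e. $f_!A \to f_!B$ is a covariant equivalence. Combined with $f_!$ preserving cofibrations, this again gives that $f_!$ is a left Quillen functor. I would probably present this second argument as the main proof, since it uses only Lemma~\ref{Lemma Pullback preserves Left fibrations} and the defining description of covariant equivalences in Theorem~\ref{The Covariant Model Structure}, and relegate the generating-cofibration viewpoint to a remark.
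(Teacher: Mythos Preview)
Your proposal is correct, and your preferred second argument is essentially the paper's proof viewed from the other side of the adjunction: the paper invokes Corollary~\ref{Cor Quillen adj for localizations} and checks that $f^*$ preserves Reedy fibrations and takes left fibrations to left fibrations (the latter by Lemma~\ref{Lemma Pullback preserves Left fibrations}), while you check that $f_!$ preserves cofibrations and covariant equivalences, deducing the latter from the adjunction isomorphism together with the very same Lemma~\ref{Lemma Pullback preserves Left fibrations}. The key input is identical in both cases; the only difference is whether one packages the verification via the right adjoint (as the paper does, using the ready-made criterion in the appendix) or via the left adjoint. Your first approach through generating trivial cofibrations also works but, as you note yourself, carries more bookkeeping about the shape of the generating sets for a Bousfield localization; the paper avoids this entirely by working on the right side.
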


 \begin{proof}
  This is the special case of \cref{lemma:localized model structure base change} when 
  $\mathcal{L} = \{ \ordered{0}: F(0) \to F(n) : n \geq 0 \}$.
 \end{proof}
 
 \begin{remone}
  Later we will prove a much stronger result, namely if $f$ is an equivalence in the CSS model structure then the Quillen adjunction is actually 
  a Quillen equivalence (\cref{The Covar invariant under CSS equiv}).
 \end{remone}
 
 \begin{theone} \label{The Diag is local of Covar}
  The following is a Quillen adjunction
  \begin{center}
   \adjun{(\ss_{/X})^{cov}}{(\ss_{/X})^{diag}}{id}{id}
  \end{center}
  which is a Quillen equivalence if $X$ is a homotopically constant simplicial space.
  Here the left side has the covariant model structure and the right side has the induced diagonal model structure
  (\cref{prop:induced model structure}).
  This implies that the diagonal model structure over $X$ is a localization of the covariant model structure over $X$.
 \end{theone}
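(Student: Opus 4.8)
The plan is to verify that $\mathrm{id}\colon (s\s_{/X})^{cov} \to (s\s_{/X})^{diag}$ is a left Quillen functor by checking that it sends cofibrations to cofibrations and trivial cofibrations to trivial cofibrations, using Lemma \ref{Cor Quillen adj for localizations} (the criterion for Quillen adjunctions between localizations). Since both model structures are localizations of the induced Reedy model structure on $s\s_{/X}$, and $\mathrm{id}$ is obviously the identity on the underlying Reedy structure, the cofibrations agree on both sides (they are the monomorphisms over $X$), so the only real content is the statement about trivial cofibrations, equivalently: every covariant trivial cofibration is a diagonal equivalence. By the localization machinery it suffices to check this on the generating set, i.e. to show that each generating map $F(0) \hookrightarrow F(n) \to X$ in the class $\mathcal{L}$ used to build the covariant model structure (Theorem \ref{The Covariant Model Structure}) becomes a diagonal equivalence over $X$.

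First I would reduce the claim about $\mathcal{L}$ to a claim in $s\s$ itself: the map $F(0) \hookrightarrow F(n)$, regarded as an object of $s\s_{/X}$ via any structure map $F(n) \to X$, is a diagonal equivalence in $(s\s_{/X})^{diag}$ precisely when $F(0) \to F(n)$ is a diagonal equivalence in $(s\s)^{diag}$, since diagonal equivalences in the induced model structure (Definition \ref{Def Induced model structure}) are detected after forgetting the map to $X$. So the whole theorem comes down to the single observation that $F(0) \to F(n)$ is a weak equivalence in the diagonal model structure on $s\s$. But this is immediate from Theorem \ref{The Diagonal Model Structure}: the diagonal model structure is itself the localization of the Reedy structure at exactly the maps $\{F(0) \to F(n) : n \geq 0\}$, so each such map is by construction a trivial cofibration there; alternatively, $\mathrm{Diag}_\#(F(n)) = \Delta[n]$ and $F(0) \to F(n)$ induces $\Delta[0] \to \Delta[n]$, which is a Kan equivalence. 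This gives the Quillen adjunction.

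For the final sentence — that the diagonal model structure is a localization of the covariant model structure — I would argue as follows. Both model structures have the same cofibrations (monomorphisms over $X$), and the identity left Quillen functor exhibits every covariant equivalence as a diagonal equivalence; conversely, the fibrant objects of $(s\s_{/X})^{diag}$ (the Reedy fibrant, homotopically constant objects over $X$, by the induced version of Theorem \ref{The Diagonal Model Structure}) are in particular left fibrations over $X$, since a homotopically constant simplicial space automatically has $Y_n \xrightarrow{\simeq} Y_0$, hence $Y_n \xrightarrow{\simeq} X_n \times_{X_0} Y_0$ whenever it is also true after passing through $X$ — more carefully, one checks the homotopy pullback square of Definition \ref{Def Of Left Fibration} directly from the fact that $Y_0 \to Y_n$ and $X_0 \to X_n$ are equivalences. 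Thus the diagonal-fibrant objects are a subclass of the covariant-fibrant objects, which together with the agreement of cofibrations is precisely the statement that $(s\s_{/X})^{diag}$ is a further (Bousfield) localization of $(s\s_{/X})^{cov}$.

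The step I expect to require the most care is not the reduction to $F(0) \to F(n)$, which is routine, but making sure the comparison of \emph{weak equivalences} in the induced (over-$X$) model structures is correctly set up — i.e. that diagonal equivalences and covariant equivalences over $X$ are tested in the way Definition \ref{Def Induced model structure} prescribes, so that the generating maps of $\mathcal{L}$ really do suffice. Once that bookkeeping is pinned down, the rest is a direct application of the localization lemmas from the appendix.
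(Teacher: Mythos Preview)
Your main argument (the second paragraph) is correct and is essentially the dual of the paper's proof. The paper applies Corollary \ref{Cor Quillen adj for localizations} on the right-adjoint side: it checks that every diagonal fibration $Y \to X$ is a left fibration by using the simplicial lifting property against the trivial diagonal cofibration $F(0)\to F(n)$. You instead work on the left-adjoint side and invoke the universal property of Bousfield localization: since the diagonal model structure already inverts each $F(0)\to F(n)$, the localization at $\mathcal{L}$ (the covariant model structure) maps into it by a left Quillen identity. Both routes reduce to the single fact $\mathrm{Diag}_\#(F(0)\to F(n)) = (\Delta[0]\to\Delta[n])$, so there is no real difference in content.

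Your third paragraph, however, contains a genuine error. You assert that fibrant objects of $(s\s_{/X})^{diag}$ are ``Reedy fibrant, homotopically constant objects over $X$'' and then argue using ``the fact that $Y_0\to Y_n$ and $X_0\to X_n$ are equivalences.'' Neither claim is true for arbitrary $X$: the induced model structure of Definition \ref{Def Induced model structure} declares $Y\to X$ fibrant when it is a diagonal fibration in $s\s$, which for non-fibrant $X$ does not force $Y$ (or $X$) to be homotopically constant---for instance $id\colon F(1)\to F(1)$ is a diagonal fibration but $F(1)$ is not homotopically constant. The paper avoids this trap by never trying to verify the pullback square of Definition \ref{Def Of Left Fibration} directly; it instead uses that $Y\to X$ has the right lifting property against the diagonal trivial cofibration $F(0)\to F(n)$, which gives the locality condition via the simplicial enrichment. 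Fortunately this paragraph is redundant: once you have the identity Quillen adjunction with identical cofibrations, the ``further localization'' statement is automatic, so the mistake does not damage your proof---but you should delete that argument or replace it with the paper's lifting argument.
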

 
 \begin{proof}
  If we localize the Reedy model structure on $\ss_{/X}$ with respect to maps of the form $F(0) \to F(n) \to X$ we 
  get the covariant model structure (\cref{The Covariant Model Structure}) whereas if we localize the Reedy model structure 
  on $\ss$ with respect to maps $\ordered{0}: F(0) \to F(n)$ we get the diagonal model structure (\cref{The Diagonal Model Structure}).
  This means we can apply \cref{the:induced vs localized} to deduce that this a Quillen adjunction.
  \par 
  Now let us assume $X$ is homotopically constant. Let $j: X \to \hat{X}$ be a Reedy fibrant replacement of $X$.
  Then $\hat{X}$ is also homotopically constant, which means it is fibrant in the diagonal model structure (\cref{The Diagonal Model Structure}).
  We now have following diagram of Quillen adjunctions 
  \begin{center}
   \begin{tikzcd}[row sep=0.5in, column sep=0.5in]
    (\ss_{/X})^{cov} \arrow[r, shift left = 1.8, "id", "\bot"'] \arrow[d, shift left = 1.8, "j_!", "\rotatebot"'] & 
    (\ss_{/X})^{diag} \arrow[l, shift left = 1.8, "id"] \arrow[d, shift left = 1.8, "j_!", "\rotatebot"'] \\
    (\ss_{/\hat{X}})^{cov} \arrow[r, shift left = 1.8, "id", "\bot"'] \arrow[u, shift left = 1.8, "j^*"] & 
    (\ss_{/\hat{X}})^{diag} \arrow[l, shift left = 1.8, "id"] \arrow[u, shift left = 1.8, "j^*"] 
   \end{tikzcd}
  \end{center}
 The  vertical adjunctions are Quillen equivalences as $j$ is a Reedy equivalence by \cref{lemma:localized model structure base change}.
 The bottom horizontal adjunction is a Quillen equivalence because $\hat{X}$ is fibrant in the diagonal model structure and \cref{the:induced vs localized}.
 Thus, $2$-out-of-$3$ implies that the top adjunction is a Quillen equivalence as well.
 \end{proof}

 \begin{remone} 
  The theorem implies that every covariant equivalence is a diagonal equivalence, whereas the opposite direction is obviously not true. 
  On the other hand, in \cref{Subsec The Yoneda Lemma} we will prove that
  we can determine whether a map is a covariant equivalence by checking whether a certain collection of maps is a diagonal equivalence 
  (\cref{the:covar equiv over simp space}).
 \end{remone}

 \begin{exone} \label{Ex Covariant over the point}
  One very important instance is the case $X=F(0)$.
  The theorem shows that $\ss^{cov}$ is the same as $\ss^{diag}$.
 \end{exone}
 
 Using the covariant model structure we can one final alternative characterization of left fibrations.
 
 \begin{lemone} \label{lemma:left fib as exp}
   Let $p: L \to X$ be a Reedy fibration. Then the following are equivalent:
   \begin{enumerate}
    \item $p$ is a left fibration.
    \item For all $n \geq 0$
    \begin{equation} \label{eq:some Kan equivalence}
     \comsq{\Map_{\ss}(F(n) \times F(1), L)}{\Map_{\ss}(F(n) \times F(1), X)}{\Map_{\ss}(F(n), L)}{\Map_{\ss}(F(n), X)}{}{}{}{}
    \end{equation}
    is a homotopy pullback square.
    \item $\exp{\ordered{0}:F(0) \to F(1)}{p}$ is a trivial Reedy fibration.
   \end{enumerate}   
 \end{lemone}

  \begin{proof}
  $(1 \Rightarrow 2)$ 
  First, let us assume $p$ is a left fibration.  
   We can write $F(n) \times F(1)$ as a colimit of a diagram 
   \begin{equation} \label{eq:zig zag diagram}
    F(n+1) \leftarrow F(n) \rightarrow ... \leftarrow F(n) \rightarrow F(n+1) 
   \end{equation}
   such that all maps $F(n) \to F(n+1)$ take $0$ to $0$ (for a more detailed description of this diagram see \cite[Diagram 10.4]{rezk2001css}). 
   Thus, by applying $2$-out-of-$3$ to the diagram $F(0) \to F(n) \to F(n+1)$, the maps in \ref{eq:zig zag diagram} are covariant equivalences
   over $X$. 
   As the covariant model structure is left proper (\cref{The Covariant Model Structure})
   this implies that  $F(n) \times \{  0 \} \to F(n) \times F(1)$ is a covariant equivalence over $X$, which implies that 
   the square \ref{eq:some Kan equivalence} is a homotopy pullback square for all $n \geq 0$, as $p$ is a left fibration. 
   
   \medskip 
   
  $(2 \Rightarrow 1)$
  Let \ref{eq:some Kan equivalence} be a homotopy pullback square for all $n \geq 0$, which means 
  \begin{equation} \label{eq:some even dumber Kan equivalence}
   \Map_{\ss}(F(n) \times F(1),L) \to \Map_{\ss}(F(n),L) \times_{\Map_{\ss}(F(n),X)} \Map_{\ss}(F(n) \times F(1),X)
  \end{equation}
   a trivial Kan fibration.
   \par 
  We have a retract diagram 
  \begin{center}
   \begin{tikzcd}[row sep=0.5in, column sep=0.5in]
    F(n) \arrow[r] \arrow[d, "\ordered{0,...,n}"] & F(n) \arrow[r] \arrow[d, "\id \times \ordered{0}"]  & F(n) \arrow[d, "\ordered{0,...,n}"] \\
    F(n+1) \arrow[r, "i"] & F(n) \times F(1) \arrow[r, "r"] & F(n+1) 
   \end{tikzcd}
  \end{center}
   which, combined with the Kan equivalence \ref{eq:some even dumber Kan equivalence} implies that 
    $$\Map_{\ss}(F(n+1),L) \to \Map_{\ss}(F(n),L) \times_{\Map_{\ss}(F(n),X)} \Map_{\ss}(F(n+1),X)$$
    is a Kan equivalence. By \cref{Lemma Other definition of Left Fibration}, this implies that $p:L \to X$ is a left fibration.
    
  \medskip
  
  $(2 \Leftrightarrow 3)$
  The map $\exp{\ordered{0}:F(0) \to F(1)}{p}$ is a trivial Reedy fibration if and only if it is a level-wise weak equivalence, 
  meaning the maps \ref{eq:some Kan equivalence} are Kan equivalences for all $n \geq 0$.
 \end{proof}
 
 \begin{remone}
  One interesting implication of this lemma is that we can get the covariant model structure on $\ss_{/X}$ 
  (\cref{The Covariant Model Structure}) 
  also by localizing with respect to maps 
  $$\{ F(0) \to F(1)^n: n \geq 0 \}.$$
 \end{remone}  
 
 One important goal in the coming sections is to give a recognition principle for covariant equivalences.
 This will be done in \cref{the:covar equiv over simp space} and needs us to first discuss the simplicial Grothendieck construction 
 (\cref{the:simplicial groth construction}). 
 However, there are certain instances, motivated by classical homotopy theory, where recognizing covariant equivalences is quite easy.
 
 First we need to prove an important lemma about covariant equivalences. 
 
  \begin{lemone} \label{lemma:exp mono and left is left}
  Let $p:L \to X$ be a left fibration and $i:A \to B$, $j: C \to D$ monomorphisms.
  \begin{enumerate}
   \item If $i$ or $j$ are covariant equivalences then $i \square j$ is a trivial covariant cofibration. 
   \item $\exp{i}{p}$ is a left fibration.
  \end{enumerate}
 \end{lemone}
  
  \begin{proof}
   By \cref{prop:joyal tierney lifting}, the two statements are equivalent. Hence it suffices to prove the second statement. 
   By \cref{lemma:left fib as exp}, $\exp{i}{p}$ is a left fibration if and only if $\exp{\ordered{0}:F(0) \to F(1)}{\exp{i}{p}}$ is a trivial fibration. 
   By direct computation we have 
   $$\exp{\ordered{0}:F(0) \to F(1)}{\exp{i}{p}} = \exp{\ordered{0} \square i}{p} = \exp{i}{\exp{\ordered{0}}{p}}$$
   Again, by \cref{lemma:left fib as exp}, $\exp{\ordered{0}}{p}$ is a trivial Reedy fibration and so using the fact that the Reedy 
   model structure is compatible with Cartesian closure (\cref{Subsec Reedy Model Structure}), 
   $\exp{i}{\exp{\ordered{0}}{p}}$ is also a trivial Reedy fibration. 
   Hence we are done.
  \end{proof}

 This lemma has a useful corollary.
 
 \begin{corone}\label{Cor Exponents of Left fib by F is left fib}
  Let $L \to X$ be a left fibration and $Y$ a simplicial space, then $L^Y \to X^Y$ is a left fibration. 
 \end{corone}

 \begin{proof}
  This follows from the previous result if we use the cofibration $\emptyset \to Y$.
 \end{proof}
 
 With the technical lemma at hand, we can give a helpful characterization of covariant equivalences.
 
 \begin{theone} \label{the:def retract covariant equiv}
  Let $i:A \to B$ be a monomorphism over $X$. Then $i$ is a trivial cofibration in the covariant model structure on $\ss_{/X}$ 
  if there exists a retraction $r: B \to A$ (not necessarily over $X$) and a $H: B \times F(1) \to B$ such that 
  $H(-,0) = ir$ and $H(-,1) = \id_B$.
 \end{theone}
 
 \begin{proof}
  For the proof we simply adapt the argument in \cite[Lemma 2.1]{heutsmoerdijk2015leftfibrationi} to simplicial spaces.
  Let $Z \xrightarrow{ \ p \ } Y \to X$ be a fibration in the covariant model structure over $X$. We need to prove that the lift of the following 
  diagram exists:
  \begin{center}
   \begin{tikzcd}[row sep=0.16in, column sep=0.5in]
    A \arrow[r, "f"] \arrow[dd, "i"] & Z \arrow[dd, "p"] \arrow[dr] & \\
    & & X \\
    B \arrow[uur, dashed] \arrow[r, "g"] & Y \arrow[ur]
   \end{tikzcd}
  \end{center}
  Let $j = (\ordered{0}: F(0) \to F(1)) \square (i: A \to B)$. 
  Using the homotopy $H$ and the map $j$, we can factor this diagram as follows:
   \begin{center}
   \begin{tikzcd}[row sep=0.2in, column sep=0.4in]
    A \arrow[r] \arrow[dd, "i"] & \ds F(1) \times A \coprod_{F(0) \times A} F(0) \times B \arrow[dd, "j"] \arrow[r, "f\pi_A \coprod fr"] & Z \arrow[dd, "p"] \arrow[dr] \\
    & & & X \\
    B \arrow[r, hookrightarrow, "\id_B \times \ordered{1}"] & B \times F(1) \arrow[uur, dashed] \arrow[r, "gH"] & Y \arrow[ur] &
   \end{tikzcd}
  \end{center} 
  It suffices to show the right hand square has a lift. For that we need to show that $j$ is a trivial cofibration. 
  However, this follows immediately from the fact that a pushout product of a trivial cofibration and cofibration is a 
  trivial cofibration in the covariant model structure by \cref{lemma:exp mono and left is left}.
 \end{proof}
 
 \begin{remone}
  As already stated in the proof, this theorem was stated and proven for simplicial sets by Heuts and Moerdijk  
  \cite[Lemma 2.1]{heutsmoerdijk2015leftfibrationi}.
 \end{remone}

 We will use this result in the next subsection to study left fibrations of Segal spaces.
 
 \subsection{Yoneda Lemma for Segal Spaces} \label{subsec:yoneda lemma Segal spaces}
 In this subsection we want to use the results we have proven until now to study left fibrations over Segal spaces.
 In particular, we prove the Yoneda lemma for Segal spaces (\cref{the:yoneda for Segal spaces}).  
 \par 
 We will start by simplifying the definition of a left fibration over Segal spaces.
 
  \begin{lemone} \label{Lemma Left Fibrations over Segal Spaces}
  Let $p:Y \to X$ be a Reedy fibration and $X$ a Segal space. The following are equivalent:
  \begin{enumerate}
   \item $Y$ is a Segal space and the following is a homotopy pullback square:
   \begin{center}
   \pbsq{Y_1}{Y_0}{X_1}{X_0}{\ordered{0}^*}{f_1}{ f_0}{\ordered{0}^*}
  \end{center}
  \item $p$ is a fibration in the Segal space model structure and the following is a homotopy pullback square:
  \begin{center}
   \pbsq{Y_1}{Y_0}{X_1}{X_0}{\ordered{0}^*}{f_1}{ f_0}{\ordered{0}^*}
  \end{center}
   \item $p$ is a left fibration.
  \end{enumerate}
 \end{lemone}

 \begin{proof}
 $(1 \Leftrightarrow 2)$
 This follows immediately from the fact that a Reedy fibration between Segal spaces is a Segal fibration (\cref{The Segal Space Model Structure}).
 
 \medskip 
 
 $(3 \Leftrightarrow 1)$ 
  We have the following diagram:
  \begin{center}
   \begin{tikzcd}[row sep=0in, column sep=0.3in]
    & Y_n \arrow[drr, "\ordered{n-1,n}^*"] \arrow[ddl, "\ordered{0,...,n-1}^*"'] \arrow[dddd, "p_n" near start] & & \\
    & & & Y_1 \arrow[ddl, "0^*"'] \arrow[dddd, "p_1"]\\
    Y_{n-1} \arrow[drr, "(n-1)^*"] \arrow[dddd, "p_{n-1}"'] & & & \\
    & & Y_0 \arrow[dddd, "p_0"] & \\
    & X_n \arrow[drr, "\ordered{n-1,n}^*" near start] \arrow[ddl, "\ordered{0,...,n-1}^*"] & & \\
    & & & X_1 \arrow[ddl, "0^*"] \\
     X_{n-1} \arrow[drr, "(n-1)^*"'] & & & \\
     & & X_0 & 
   \end{tikzcd}
  \end{center}
  We have following facts about this diagram:
  \begin{itemize}
   \item The bottom square is a homotopy pullback for all $n \geq 1$ if and only if $X$ is a Segal space.
   \item Similarly, the top square is a homotopy pullback for all $n \geq 1$ if and only if $Y$ is a Segal space.
   \item The four squares around the cube are homotopy pullback square if and only if $p$ is a left fibration 
   (\cref{Lemma Other definition of Left Fibration}).
  \end{itemize}
  The result now follows from checking homotopy pullback squares. The bottom square is always a homotopy pullback square.
  \par 
  If we assume $(2)$ the then the top square and the right hand square are homotopy pullback square and using induction we can conclude
  that the other three squares are homotopy pullback square as well.
  \par 
  If we assume $(3)$ then all four square around are homotopy pullback squares which implies that the top square is a homotopy pullback square 
  as well. 
 \end{proof}

 \begin{remone}
  Note that \cite{debrito2018leftfibration} and \cite{kazhdanvarshvsky2014yoneda} use the characterization in \cref{Lemma Left Fibrations over Segal Spaces}
  as a definition of left fibrations rather than the definition we have given here (\cref{def:left fibration}). 
  Hence this lemma proves that our definition agrees with theirs when the base is a Segal space and thus is a proper generalization 
  of their definition.
 \end{remone}
 
 Note a left fibration of Segal spaces generalizes discrete Grothendieck opfibration.
 
 \begin{lemone}\label{lemma:pi zero Grothendieck fibration}
  Let $W$ be a Segal space and $p: L \to W$ be a left fibration. Then the induced functor on homotopy categories 
  $\Ho (p): \Ho (L) \to \Ho (W)$ is a discrete Grothendieck opfibration.
 \end{lemone}
 
 \begin{proof}
  By \cref{Lemma Left Fibrations over Segal Spaces} $L$ is a Segal space. Thus $\Ho(p): \Ho(L) \to \Ho(W)$ is a functor. We want to prove it is a discrete Grothendieck opfibration. 
  Let $[f]: x \to y$ be a morphism in $\Ho(W)$ and $\hat{x}$ be a lift in $\Ho(L)$. We need to prove there is a unique lift $\hat{[f]}$ 
  such that $\Ho(p)(\hat{[f]}) = [f]$. 
  \par 
  Let $f$ in $W_1$ be a representative for the class $[f] \in \pi_0(W_1)$.
  Then $(\hat{x},f)$ is a point in $L_0 \times_{W_0} W_1$. The fact that $p:L \to W$ is a left fibration implies that 
  $$L_1 \twoheadrightarrow L_0 \times_{W_0} W_1$$
  is a trivial Kan fibration 
  and so the fiber over $(\hat{x},f)$, which we denote by $F_f$, is contractible. This means $\pi_0(F_f) = \{\hat{[f]} \}$ has a single element, 
  which is precisely the unique lift. 
 \end{proof}
 
 There is an inverse argument to \cref{lemma:pi zero Grothendieck fibration}. 
 
 \begin{lemone} \label{lemma:Grothendieck opfib is left fib}
  Let $p: \D \to \C$ be a Grothendieck opfibration. Then $i_2^*(Np): i_2^*(N\D) \to i_2^*(N\C)$ is a left fibration.
 \end{lemone}

 \begin{proof}
  Notice $i_2^*(N\D)$ and $i_2^*(N\C)$ are simplicial discrete spaces, which means $i_2^*(Np)$ is a Reedy fibration. 
  Moreover, $i_2^*(N\D)$ and $i_2^*(N\C)$ are nerves of categories and hence Segal spaces. Thus by \cref{Lemma Left Fibrations over Segal Spaces} 
  we have to show that the square 
  \begin{center}
   \begin{tikzcd}[row sep=0.5in, column sep=0.5in]
    i_2^*N(\D)_1 \arrow[r, "s"] \arrow[d] & i_2^*N(\D)_0 \arrow[d] \\
    i_2^*N(\C)_1 \arrow[r, "s"] & i_2^*N(\C)_0
   \end{tikzcd}
  \end{center}
  is a pullback square. However, this is precisely the lifting condition of a Grothendieck opfibration (\cref{lemma:discrete Groth fib lifting}).
 \end{proof}
 
 We can use the connection between left fibrations and Grothendieck opfibrations to study conservativity of left fibrations.
 
 \begin{defone}
  A map of Segal spaces $p: V \to W$ is {\it conservative} if the square 
  \begin{equation} \label{eq:conservative map Segal spaces}
   \begin{tikzcd}[row sep=0.5in, column sep=0.5in]
    V_{hoequiv} \arrow[r] \arrow[d] & V_1 \arrow[d] \\
    W_{hoequiv} \arrow[r, hookrightarrow] & W_1 
   \end{tikzcd}
  \end{equation}
  is a homotopy pullback square.
 \end{defone}

 We can characterize conservativity via the homotopy category.
 
 \begin{lemone} \label{lemma:conservative map Segal spaces}
  A map of Segal spaces $p:V \to W$ is conservative if and only if the functor $\Ho p: \Ho V \to \Ho W$
  is a conservative functor.
 \end{lemone}

 \begin{proof}
  By \cref{def:w hoequiv}, $W_{hoequiv} \to W_1$ is an inclusion of path-components. Thus the square \ref{eq:conservative map Segal spaces}
  is a homotopy pullback square if and only if 
   \begin{center} 
   \begin{tikzcd}[row sep=0.5in, column sep=0.5in]
    \pi_0(V_{hoequiv}) \arrow[r] \arrow[d] & \pi_0(V_1) \arrow[d] \\
    \pi_0(W_{hoequiv}) \arrow[r, hookrightarrow] & \pi_0(W_1) 
   \end{tikzcd}
  \end{center}
  is a pullback square of sets. However, by \cref{def:homotopy category}, $\pi_0(W_{hoequiv}) \subset \pi_0(W_1)$ is the subsets of isomorphisms.
  Thus, the square is a pullback if and only if $\Ho(p)$ is conservative.
 \end{proof}

 We can finally relate conservativity and left fibrations.
 
 \begin{lemone} \label{lemma:left fibration conservative}
  Let $W$ be a Segal space and $p:L \to W$ a left fibration. Then $p$ is conservative.
 \end{lemone}

 \begin{proof}
  By \cref{Lemma Left Fibrations over Segal Spaces}, $L$ is a Segal space and so $p$ is a map of Segal spaces. 
  Thus, by \cref{lemma:conservative map Segal spaces}, $p$ is conservative if and only if the functor $\Ho p: \Ho W \to \Ho V$ is conservative. 
  However, by \cref{lemma:pi zero Grothendieck fibration}, $p$ is a discrete Grothendieck opfibration and thus is conservative.
 \end{proof}

 \begin{remone}
  This result was proven for quasi-categories in \cite[Proposition 4.9]{joyal2008theory}, which could give us the analogous argument for complete 
  Segal spaces. However, this proof generalizes the result to arbitrary Segal spaces. 
 \end{remone}

 We can use conservativity to characterize left fibrations over complete Segal spaces.
 
 \begin{lemone} \label{lemma:left fib over CSS is CSS fib}
  Let $W$ be a complete Segal space and $p: V \to W$ be a left fibration. Then $V$ is a complete Segal space.
 \end{lemone}

 \begin{proof}
  We have the diagram
  \begin{center}
   \begin{tikzcd}[row sep=0.5in, column sep=0.5in]
    V_{hoequiv} \arrow[r] \arrow[d] & V_1 \arrow[r] \arrow[d] & V_0 \arrow[d] \\
    W_{hoequiv} \arrow[r] \arrow[rr, bend right=20, "\simeq"']& W_1 \arrow[r] & W_0
   \end{tikzcd}
   .
  \end{center}
  The left hand square is a homotopy pullback square as $p$ is a left fibration and thus conservative 
  (\cref{lemma:left fibration conservative}).
  The right hand square is a pullback square because of $p$ is a left fibration. 
  Hence the whole rectangle is a homotopy pullback. 
  \par 
  Now completeness of $W$ implies that the bottom map is an equivalence (\cref{Def Complete Segal Spaces}) and, as the square is a 
  homotopy pullback, this means $V_{hoequiv} \to V_0$ is an equivalence. Hence, again by \cref{Def Complete Segal Spaces}, this means 
  that $V$ is a complete Segal space.
 \end{proof}

 We can combine \cref{Lemma Left Fibrations over Segal Spaces} and \cref{lemma:left fib over CSS is CSS fib} into following very useful result.
 
 \begin{propone} \label{prop:left fib over Segal is Segal fib}
  Let $W$ be a Segal space. Then the adjunction 
  \begin{center}
   \adjun{(\ss_{/W})^{Seg}}{(\ss_{/W})^{cov}}{\id}{\id}
  \end{center}
  is a Quillen adjunction, where the left hand side has the induced Segal space model structure and the right hand side has the covariant 
  model structure. 
  \par 
  Moreover, if $W$ is also complete then the adjunction 
  \begin{center}
   \adjun{(\ss_{/W})^{CSS}}{(\ss_{/W})^{cov}}{\id}{\id}
  \end{center}
  is a Quillen adjunction, where the left hand side has the CSS model structure and the right hand side has the covariant model structure. 
 \end{propone}
 
 \begin{proof}
  Let us focus on the case for Segal spaces first. 
  By \cref{Cor Quillen adj for localizations} it suffices to prove that the left adjoint preserves monomorphisms and the right adjoint 
  preserves Reedy fibrations and fibrant objects. It is evident that the identity functor preserves monomorphisms and Reedy fibrations.
  For the last part, first observe that if $W$ is a Segal space then a fibrant object in $(\ss_{/W})^{Seg}$ is a Segal fibration 
  (by \cref{lemma:localized model structure base change}).
  Thus we only have to prove that if $W$ is a Segal space and $p: V \to W$ is a left fibration then $p$ is a Segal fibration, 
  which is precisely the statement of \cref{Lemma Left Fibrations over Segal Spaces}.
  \par
  The case for complete Segal spaces is identical except in the last step we use the fact that a left fibration over a complete Segal space is a 
  complete Segal space fibration as shown in \cref{lemma:left fib over CSS is CSS fib}. 
 \end{proof}
 
 \begin{remone}
  The assumption that the base simplicial space is fibrant is not necessary and this proposition can be generalized 
  to arbitrary simplicial spaces as we will do in \cref{The Covariant local of CSS}. However, before we can do that we need to 
  understand invariance properties of the covariant model structure (\cref{The Covar invariant under CSS equiv}).
 \end{remone}

 Having a better understanding of left fibrations over Segal spaces, we can move on to prove the Yoneda lemma for Segal spaces. 

 \begin{defone} \label{def:under Segal space}
  Let $W$ be a Segal space and $x$ an object in $W$. Then we define the under-Segal space $W_{x/}$ as
  $$W_{x/} = W^{F(1)} \strut^{s} \underset{W}{\times}^{ \{x\} } F(0)$$
 \end{defone}

 \begin{remone}
  The definition of under-category for Segal spaces given here differs from the definition of under-category 
  for quasi-categories \cite[Proposition 1.2.9.2]{lurie2009htt}, which uses the join construction.
  However, these two definitions are in fact equivalent. Indeed, this follows from the fact that the covariant model structures of 
  quasi-categories and complete Segal spaces are equivalent, as proven in \cref{Sec Comparison with Quasi-Categories}.
 \end{remone}

 Notice the under-Segal space is in fact a Segal space.
 
 \begin{lemone} \label{lemma:under Segal is Segal}
  For a Segal space $W$ and object $x$, the under-Segal space $W_{x/}$ is a Segal space.
 \end{lemone}
 
 \begin{proof}
  We have following pullback diagram 
  \begin{center}
   \begin{tikzcd}[row sep=0.5in, column sep=0.5in]
    W_{x/} \arrow[r] \arrow[d] & W^{F(1)} \arrow[d, "(s \comma t)"] \\
    F(0) \times W \arrow[r, "\{ x \} \times \id_W"] & W \times W
   \end{tikzcd}
  \end{center}
  The right hand map is a fibration in the Segal space model structure as it is the pullback exponential of a cofibration and a fibrant object 
  (\cref{The Segal Space Model Structure}). 
  Thus the pullback is a Segal fibration. The result now follows from the fact that $W$ itself is a Segal space.
 \end{proof}

 We have shown that the projection map $W_{x/} \to W$ that takes each morphism to its source is a fibration in the Segal space 
 model structure. We want to show that it is actually left fibration.
 
 \begin{theone} \label{the:under CSS left fibration}
  Let $W$ be a Segal space and $x$ an object in $W$. Then the projection map $W_{x/} \to W$ is a left fibration.
 \end{theone}
 
 \begin{proof}
  In order to simplify notation we will denote the four vertices $F(1) \times F(1)$ by $\{ 00$, $01$, $10$, $11 \}$. 
   
  By \cref{Lemma Left Fibrations over Segal Spaces} it suffices to prove that the map 
  $$\pi: (W_{x/})_1 \twoheadrightarrow (W_{x/})_0 \underset{W_0}{\times} W_1$$
  is a trivial Kan fibration. 
  \par 
  Notice 
  $$(W_{x/})_0 \underset{W_0}{\times} W_1 \cong \Delta[0] \underset{W_0}{\times} W_1 \underset{W_0}{\times} W_1 \cong 
  \Delta[0] \underset{W_0}{\times} \Map_{\ss}(G(2), W)$$
  and 
  $$(W_{x/})_1 = \Delta[0] \underset{W_1}{\times} (W^{F(1)})_1 \cong \Delta[0] \underset{W_0}{\times} W_0 \times_{W_1} (W^{F(1)})_1 \cong 
  \Delta[0] \underset{W_0}{\times} \Map_{\ss}(F(0) \coprod_{F(1)}\strut^{\ordered{10,11}} (F(1) \times F(1)),W).$$
  Thus, it suffices to prove the map 
  $$\Map(F(0) \coprod_{F(1)} \strut^{\ordered{10,11}} (F(1) \times F(1)),W) \to \Map_{\ss}(G(2), W)$$
  is a trivial Kan fibration, or, equivalently, the map 
  \begin{equation} \label{eq:g formula}
   \ordered{00,01} \coprod_{\ordered{01}} \ordered{01,11}: G(2) \hookrightarrow F(0) \coprod_{F(1)} (F(1) \times F(1))
  \end{equation}
  is a trivial cofibration in the Segal space model structure (as $W$ is a Segal space).
  \par 
  This map factors as
  $$ G(2) \hookrightarrow F(2) \xrightarrow{\ordered{00,01,11}} F(0) \coprod_{F(1)} (F(1) \times F(1)) $$
  and so we only need to show the second map is a Segal equivalence.
  Using the bijection 
  $$F(1) \times F(1) \cong F(2) \strut^{\ordered{0,2}} \coprod_{F(1)}\strut^{\ordered{0,2}} F(2),$$ 
  the map $\ordered{00,01,11}$ is the pushout of the following diagram:
  \begin{center}
   \begin{tikzcd}[row sep=0.5in, column sep=0.5in]
    F(1) \arrow[d, hookrightarrow] & 
    F(1) \arrow[l, "\id"'] \arrow[r, "\ordered{0,2}"] \arrow[d, hookrightarrow, "="] & 
    F(2) \arrow[d, hookrightarrow, "="]  \\
    \ds F(0) \coprod_{F(1)} F(2) & F(1) \arrow[l, "\ordered{0,2}"] \arrow[r, "\ordered{0,2}"'] & F(2) 
   \end{tikzcd}
   .
  \end{center}
  So, the result follows from knowing that the left hand map is a trivial cofibration, 
  as the Segal space model structure is left proper by \cref{The Segal Space Model Structure}.
  The left hand map itself is the pushout of the following diagram:
  \begin{center}
   \begin{tikzcd}[row sep=0.5in, column sep=0.5in]
    F(0) \arrow[d, hookrightarrow, "\simeq"]& F(1) \arrow[d, hookrightarrow, "\simeq"] \arrow[l] \arrow[r, "\ordered{1,2}"] & G(2) \arrow[d, hookrightarrow, "\simeq"] \\ 
    F(0) & F(1) \arrow[l] \arrow[r, "\ordered{1,2}"'] & F(2) 
   \end{tikzcd}
   .
  \end{center}
  As all vertical arrows are equivalences in the Segal space model structure and the Segal space model structure is left proper, 
  the pushout is a Segal space equivalence as well. Hence we are done.
 \end{proof}
 
 \begin{remone}
  In order to better understand the proof it might be helpful to visualize the map \ref{eq:g formula} as:
  \begin{center}
   $\ds \ordered{00,01} \coprod_{\ordered{01}} \ordered{01,11}:$ 
   \begin{tikzcd}[row sep=0.11in, column sep=0.25in]
    0 \arrow[rr] & & 1 \arrow[dd] & & & & & 00 \arrow[rr] \arrow[ddrr,bend left = 15] \arrow[ddrr, bend right=15] & & 01 \arrow[dd] \\
     & &  & \strut \arrow[rrrr, hookrightarrow] & &  & & \strut \\
      & & 2 & & & & & & & 11 
   \end{tikzcd}
  \end{center}
 \end{remone}

 \begin{remone}
  The fact that the left fibration $W_{x/} \to W$ happened to be a Segal fibration is not a coincidence. 
  We will later see that every left fibration is indeed a Segal fibration (\cref{The Covariant local of CSS})
 \end{remone}
 
 \begin{remone} 

  The Segal space condition in \cref{the:under CSS left fibration} is necessary, as we will show with $G(2)$.
  We want to prove that $G(2)_{0/} \to G(2)$ is not a left fibration. 
  For that it suffices to observe that the map
  $$ \pi: (G(2)_{0/})_1 \to (G(2)_{0/})_0 \underset{G(2)_0}{\times} G(2)_1$$
  is not a trivial Kan fibration. 
  \par 
  Notice that 
  $$(G(2)_{0/})_0 \underset{G(2)_0}{\times} G(2)_1 = \{ 00, 01\} \underset{\{ 0,1,2 \}}{\times} \{ 00, 01, 11, 12, 22 \} 
  = \{ (00,00), (00,01), (01,11), (01,12) \}.$$
  On the other hand we have 
  $$(G(2)_{0/})_1 = \{\alpha: F(1) \times F(1) \to G(2): \alpha(0,0) = 0 \}$$
  and the map $\pi$ simply restricts $\alpha: F(1) \times F(1) \to G(2)$ to the pair 
  $(\alpha \circ \ordered{(0,0),(0,1)}, \alpha \circ \ordered{(0,1),(1,1)})$.
  \par 
  Hence, the point $(01,12)$ has no lift along $\pi$ as any choice of lift $\alpha$ necessarily satisfies $\alpha(1,1) = 2$, which is impossible.
 \end{remone}

 We are now at the point where we can prove the Yoneda lemma for Segal spaces.
 
 \begin{theone} \label{the:yoneda for Segal spaces}
  Let $W$ be a Segal space and $x$ an object. Then the map $\{id_x\}: F(0) \to W_{x/}$ is a covariant equivalence over $W$.
 \end{theone}

 \begin{proof}
  Let $\mul: F(1) \times F(1) \to F(1)$ be the map defined by $\mul(i,j) = ij$ where $i,j = 0,1$.
  Moreover, let $\mul^*: W^{F(1)} \to W^{F(1) \times F(1)}$. 
  We can adjoin it to the map $\widehat{\mul^*}: F(1) \times W^{F(1)} \to W^{F(1)}$, which fits into the following diagram:
  \begin{center}
   \begin{tikzcd}[row sep=0.5in, column sep=0.8in]
    F(0) \times W^{F(1)} \arrow[dr, "s_0s"] \arrow[d, hookrightarrow, "\ordered{0} \times \id"'] & \\
    F(1) \times W^{F(1)} \arrow[r, "\widehat{\mul^*}"] & W^{F(1)} \\
    F(0) \times W^{F(1)} \arrow[ur, "\id"'] \arrow[u, hookrightarrow, "\ordered{1} \times \id"]
   \end{tikzcd}
   .
  \end{center}
  Now restricting this diagram via the inclusion $W_{x/} \hookrightarrow W^{F(1)}$  we get the diagram 
  \begin{center}
   \begin{tikzcd}[row sep=0.5in, column sep=0.8in]
    F(0) \times W_{x/} \arrow[dr, "\{\id_x\}"] \arrow[d, hookrightarrow, "\ordered{0} \times \id"'] & \\
    F(1) \times W_{x/} \arrow[r, "\widehat{\mul^*}"] & W_{x/} \\
    F(0) \times W_{x/} \arrow[ur, "\id"'] \arrow[u, hookrightarrow, "\ordered{1} \times \id"]
   \end{tikzcd}
   .
  \end{center}
  Hence by \cref{the:def retract covariant equiv} the map $\{ \id_x \}: F(0) \to W_{x/}$ over $W$ is a covariant equivalence.
 \end{proof}
 
 Why do we call this the Yoneda lemma? The next corollary makes the connection more clear:
 
 \begin{corone} \label{cor:yoneda lemma Segal spaces hom version}
  Let $W$ be a Segal space and $L \to W$ be a left fibration. Then the map of spaces 
  $$\{id_x\}^*: \Map_{/W}(W_{x/},L) \to \Map_{/W}(F(0),L)$$
  is a trivial Kan fibration.
 \end{corone}

 \begin{proof}
  Follows from \cref{the:yoneda for Segal spaces} and the fact that the covariant model structure is simplicial (\cref{The Covariant Model Structure}).
 \end{proof}

 \begin{remone}
  As this is a very famous result, it has been proven by many people, including
  \cite[Lemma 1.31]{debrito2018leftfibration} in the context of Segal spaces.
  It also been proven by Lurie \cite{lurie2009htt}, where it follows from the straightening construction, 
  and by Joyal \cite[Chapter 11]{joyal2008theory}, using quasi-categories.
  Finally, there is also a proof  using $\infty$-cosmoi \cite[Theorem 6.0.1]{riehlverity2017inftycosmos}.
 \end{remone}
 
 We can use this result to study the relation between initial objects and representable functors. 
 Let $W$ be a Segal space. Then for two objects $x,y$ we can define the mapping space $\map_W(x,y)$ (\cref{def:mapping space Segal space}).
 We would hope that this choice is functorial, meaning we get a functor $\map_W(x,-)$. This would require an actual composition map 
 $$f_*: \map_W(x,y) \to \map_W(x,z)$$
 for any map $f: y \to z$ in $W$. However, composition of morphisms in a Segal space is only defined up to contractible ambiguity. 
 For more details on composition in Segal spaces see \cite[Section 5]{rezk2001css}
 \par 
 We will thus take a fibrational approach.
 We observed in \cref{ex:representable Grothendieck fib} that the Grothendieck opfibration associated to the representable functor 
 $\Hom(a,-): \C \to \set$ is the under-category $\C_{a/} \to \C$. This motivates following definition:
 
 \begin{defone}
  A left fibration $p:V \to W$ is called {\it representable} if there exists an object $x$ in $W$ and a Reedy equivalence 
  $f: W_{x/} \to V$ over $X$.
 \end{defone}

 Using covariant equivalences we can relate representable left fibrations with the concept of {\it initiality}.
 
 \begin{defone} \label{def:initial object}
  Let $W$ be a Segal space. An object $x$ in $W$ is called {\it initial} if the map $\{x\}:F(0) \to W$ is a covariant equivalence 
  over $W$.
 \end{defone}

  \begin{remone}
  Initial objects are a special kind of colimit as we shall see in \cref{Subsec Colimits Cofinality and Quillens Theorem A}.
  Initial objects were thus studied in the context of colimits in quasi-categories \cite[1.2.12]{lurie2009htt} \cite[10.1]{joyal2008notes}.
 \end{remone}
 
 \begin{theone} \label{the:rep equiv initial obj}
  Let $p:L \to W$ be a left fibration. Then the following are equivalent:
  \begin{enumerate}
   \item $p$ is representable.
   \item $L$ has an initial object
  \end{enumerate}
 \end{theone}
 
 \begin{proof}
   $(1) \Rightarrow (2)$
 If $p$ is representable then $L$ is Reedy equivalent to $W_{x/}$ for some object $x$ in $W$. Thus it suffices to prove $W_{x/}$ 
 has an initial object. We have following diagram.
  \begin{center}
   \begin{tikzcd}[row sep=0.5in, column sep=0.3in]
    & & (W_{x/})_{\id_x/} \arrow[d, "\pi" ,twoheadrightarrow]\\
    F(0) \arrow[dr, "x"] \arrow[urr, "\{ \id_{\id_x} \}", "\simeq"', bend left = 20] \arrow[rr, "\{ \id_x \}", "\simeq"'] & & 
    W_{x/} \arrow[dl, "p", twoheadrightarrow] \\
     & W &
   \end{tikzcd}
   .
  \end{center} 
  $W_{x/}$ is a left fibration over $W$. By  \cref{Lemma Composition preserves Left fibrations} 
  $(W_{x/})_{id_x/}$ is also a left fibration over $W$ as the composition of left fibrations is a left
  fibration. By \cref{the:yoneda for Segal spaces}, the map $\{ \id_x \}$ is a covariant equivalence over $W$. 
  By the same argument
  the map $\{ \id_{\id_x} \}$ is a covariant equivalence over $W_{x/}$, which implies it is also a covariant equivalence over $W$ 
  (\cref{The Base change covar}).
  By $2$-out-of-$3$, we get that $\pi$ is a covariant equivalence over $W$. But $\pi$ is a map between left fibrations over $W$ and thus must 
  be a trivial Reedy fibration (\cref{The Covariant Model Structure}).
  
  \medskip 
  
  $(2) \Rightarrow (1)$
  Let $\{ l \}:F(0) \to L$ be a covariant equivalence over $L$. 
  Then, by \cref{The Base change covar}, $\{ l \}:F(0) \to L$ is a covariant equivalence over $X$. 
  By \cref{the:yoneda for Segal spaces}, $F(0) \to X_{p(l)/}$ is a trivial covariant cofibration over $X$ and, by assumption, $L \to X$ 
  is a left fibration and thus a covariant fibration over $X$ and so we can lift the diagram below 
  \begin{center}
   \begin{tikzcd}[row sep=0.5in, column sep=0.5in]
    F(0) \arrow[r, "\{ x \}", "\simeq"'] \arrow[d, "\{ \id_{p(l)} \}"', "\simeq"] & L  \arrow[d, "p"] \\
    X_{p(l)/} \arrow[r] & X
   \end{tikzcd}
   .
  \end{center}
  By assumption, the top map is a covariant equivalence over $L$ and hence also over $X$ (\cref{The Base change covar}). 
  Thus, by $2$-out-of-$3$, the lift $X_{p(l)/} \to L$ is a covariant equivalence over $X$.
  As both are left fibrations, by \cref{The Covariant Model Structure}, this map is a Reedy equivalence.
 \end{proof}
 
 \begin{remone}
  Notice the second condition only depends on $L$. Thus representability of a left fibration $p:L \to W$ is independent of the map $p$ 
  and base $W$.
 \end{remone}

\section{From the Grothendieck Construction to the Yoneda Lemma} \label{sec:grothendieck construction}
 In \cref{subsec:yoneda lemma Segal spaces} we studied many important features of the covariant model structure over Segal spaces. 
 The goal is to generalize all those results to the covariant model structure over an arbitrary simplicial space. 
 An important step is to have a precise characterization of left fibrations over $F(n)$ and a computationally feasible way 
 for characterizing covariant equivalences over an arbitrary simplicial space.
 The goal of this section is to address both these concerns.
 \par 
 In \cref{subsec:grothendieck construction over categories} we prove the {\it simplicial Grothendieck construction} for categories 
 (\cref{the:simplicial groth construction}), which in particular gives us a characterization of left fibrations over $F(n)$. 
 In \cref{Subsec The Yoneda Lemma}, we will then use this characterization to prove the 
 {\it recognition principle} for covariant equivalences (\cref{the:covar equiv over simp space}).

  \begin{notone} \label{not:drop the i}
  In this section we want to study left fibration over $(i_2)^*N\C$ (\cref{not:bunch of functors}).
  In order to simplify notation, we will denote the simplicial space $(i_2)^*N\C$ by $N\C$ as well.
 \end{notone}

\subsection{Grothendieck Construction over Categories} \label{subsec:grothendieck construction over categories}
 In \cref{prop:integral adjunctions} we proved an adjunction between set-valued functors out of $\C$ and functors over $\C$, 
 which gives us an equivalence when we restrict to discrete Grothendieck opfibrations.
 \par 
 In this subsection we generalize this result and prove two Quillen equivalences between space valued functors out of $\C$ 
 and left fibrations over $N\C$. We will then use this result to give a precise characterization of left fibration over $F(n)$.
 
 \begin{defone} \label{def:projective model structure}
  Let $\C$ be a small category. We define the {\it projective} model structure on the functor category $\Fun(\C,\s)$ as follows.
  \begin{itemize}
   \item[(F)] A natural transformation $\alpha: G \to H$ is a projective fibration if and only if for every object $c$ in $\C$ the map 
   $\alpha_c: G(c) \to H(c)$ is a Kan fibration.
   \item[(W)] A natural transformation $\alpha: G \to H$ is a projective equivalence if and only if for every object $c$ in $\C$ the map 
   $\alpha_c: G(c) \to H(c)$ is a Kan equivalence.
   \item[(C)] A natural transformation is a projective cofibration if it satisfies the left lifting property with respect to all trivial projective 
   fibrations. 
  \end{itemize}
 \end{defone}
 
 The projective model structure on $\Fun(\C,\s)$ exists (\cite[Proposition A.2.8.2]{lurie2009htt}).
 Recall that for a given simplicial set $S$ we denote the constant functor as $\{ S \} : \C \to \s$ (\cref{subsec:notation}).
 
 \begin{remone} \label{rem:projective properties}
   The projective model structure has many desirable properties. 
  \begin{enumerate}
   \item It is proper.
   \item It is combinatorial.
   \item It is a simplicial model category, with simplicial enrichment given by 
   $$\Map(F,G)_n = \Nat(F \times \{ \Delta[n] \},G)$$
   \item It is compatible with Cartesian closure:
   If $A \to B$ and $C \to D$ are cofibrations then $(A \to B) \square (C \to D)$ is a cofibration, which is trivial if either is trivial.
  \end{enumerate}
 \end{remone}
 
 \begin{remone} \label{rem:functor is space simp functor in set} 
  Using the isomorphism of functor categories 
  $$\Fun(\C,\s) \cong \Fun(\Delta^{op}, \Fun(\C,\set))$$
  we can think of a space valued functor $G: \C \to \s$ as a simplicial set valued functor $G_\bullet: \C \to \set$.
  Thus we will often switch between those when required. 
 \end{remone}

 Our first step is to generalize the adjunction from \cref{prop:integral adjunctions}.
 
 \begin{defone} \label{def:sint}
  Let 
  $$\sint_\C : \Fun(\C,\s) \to \ss_{/N\C}$$ 
  be the functor that applies $\int_\C$ level-wise to the functor $G: \C \to \s$. 
 \end{defone}
 
 \begin{remone} \label{rem:sint value}
  By direct computation, the simplicial space $\sint_\C G$ is level-wise equal to 
  $$(\sint_\C G)_n = \coprod_{c_0 \to \cdots \to c_n} G(c_0)$$
 \end{remone}

 \begin{defone} \label{def:sT}
  Let 
  $$\sT_\C: \ss_{/N\C} \to \Fun(\C,\s)$$
  be the functor defined as the left Kan extension of the functor 
  $$\sT_\C(p: F(n) \times \Delta[l] \to N\C) = \Hom(p(0),-) \times \Delta[l]$$
 \end{defone}

 \begin{defone} \label{def:sH}
  Let 
  $$\sH_\C: \ss_{/N\C} \to \Fun(\C,\s)$$
  be the functor that takes a map $p: Y \to N\C$ to the functor that for an object $c$ in $\C$ is given by 
  $$\sH_\C(p:Y \to N\C)(c) = \Map_{/N\C}(N\C_{c/},Y)$$
 \end{defone}

 \begin{lemone}
  The functors $\sT, \sint, \sH$ give us two simplicially enriched adjunctions $(\sT \dashv \sint)$, $(\sint \dashv \sH)$.
 \begin{center}
   \begin{tikzcd}[row sep=0.5in, column sep=0.9in]
    \Fun(\C,\s) \arrow[r, "\sint_\C" description] & \ss_{/N\C} \arrow[l, bend left = 30, "\sH_\C", "\bot"'] \arrow[l, bend right=30, "\sT_\C"', "\bot"]  
   \end{tikzcd}
 \end{center}
 \end{lemone}
 
 \begin{proof}
  By definition $\sT$ commutes with colimits. Hence it suffices to observe that we have following natural bijections
  $$\Nat(\sT_\C(p: F(n) \times \Delta[l] \to N\C),G) \cong \Nat(\Hom(p(0),-) \times \Delta[l], G) \cong G_l(p(0)) \cong $$
  $$\Hom_{/\C}(p:F(n) \times \Delta[l] \to N\C,\sint_\C G)$$
  which proves the adjunction $\sT_\C \dashv \sint_\C$ ($G_l: \C \to \set$ is as described in \cref{rem:functor is space simp functor in set}).
  \par 
  On the other hand we have 
  $$\Hom_{/N\C}(\sint_\C(\Hom(c,-) \times \Delta[l]) \to N\C, p:Y \to N\C) \cong $$
  $$  \Hom_{/N\C}(N\C_{c/} \Delta[l] \to N\C, p:Y \to N\C) = \sH(p:Y \to N\C)_l(c) \cong $$
  $$  \Nat(\Hom(c,-) \times \Delta[l], \sH(p: Y \to N\C))$$
  which proves the adjunction $\sint_\C \dashv \sH_\C$.
  \par 
  Finally, we observe that both adjunctions respect the simplicial enrichment described in 
  \cref{Subsec Simplicial Spaces} and \cref{rem:projective properties}.
 \end{proof}
 
 We would like to prove that if a functor $G: \C \to \s$ is valued in Kan complexes then the map of simplicial spaces 
 $\sint_\C G \to N\C$ is a left fibration. We observe immediately that $\sint_\C G \to N\C$ has the right lifting property 
 with respect to maps of the form $(F(0) \to F(n)) \square (\partial \Delta[l] \to \Delta[l])$.
 This immediately gives us following lemma.
 
 \begin{lemone} \label{lemma:sint Reedy fib replacement}
  Let $G: \C \to \s$ be a functor and let $R\sint_\C G \to N\C$ be a Reedy fibrant replacement of $\sint_\C G \to N\C$. 
  Then $R\sint_\C G \to N\C$ is a left fibration.
 \end{lemone}
 
 We can use this observation to determine when $\sint_\C \alpha$ is a covariant equivalence.
 
 \begin{lemone} \label{lemma:sint reflect equiv}
  Let $\alpha: G \to H$ be a natural transformation. Then $\alpha$ is a projective equivalence if and only if 
  $\sint_\C \alpha$ is a covariant equivalence. 
 \end{lemone}

 \begin{proof}
  Let $R\sint_\C \alpha: R\sint_\C G \to R\sint_\C H$ be a Reedy fibrant replacement of $\alpha$. By the previous lemma $R\sint_\C G \to N\C$, 
  $R\sint_\C H \to N\C$ are left fibrations and so is an equivalence if and only if $(R\sint_\C G)_0 \to (R\sint_\C H)_0$ is a Kan equivalence.
  We now have following diagram
  \begin{center}
   \begin{tikzcd}[row sep=0.5in, column sep=0.5in]
    \coprod_{c_0} G(c_0) \arrow[r] \arrow[d, "\simeq"] & \coprod_{c_0} H(c_0) \arrow[d, "\simeq"] \\
    (R\sint_\C G)_0 \arrow[r] &  (R\sint_\C H)_0
   \end{tikzcd}
   .
  \end{center}
  The vertical maps are Kan equivalences as Reedy equivalences are level-wise Kan equivalences. 
  Hence the top map is an equivalence (which is equivalent to $\alpha$ being a projective equivalence) if and only if 
  the bottom map is an equivalence (which is equivalent to $\sint_\C \alpha$ be a covariant equivalence). 
 \end{proof}

 Although $\sint_\C G \to N\C$ has many desirable properties it is generally not a left fibration, because it is not a Reedy fibration. 
 Hence, we need to define an alternative, yet equivalent, functor that takes projectively fibrant functors to left fibrations.
 \par 
 The following remark can help guide us towards a working definition.
 
 \begin{remone}
  In \cref{prop:integral adjunctions} the left adjoint of $\int_\C$, denoted $\T_\C$, was defined as 
  $\T_\C(p: \D \to \C)(c) = \pi_0(\C_{/c} \times_{\C} \D)$ which exactly coincides with the left Kan extension of the functor that 
  takes the functor $p:[n] \to \C$ to the representable functor  $\Hom(p(0),-)$.
  \par 
  The same is not true for the simplicial case. Concretely, $\sT(p: Y \to N\C)(c)$ is only naturally Kan equivalent to 
  $\Diag^*(N\C_{/c} \times_{N\C} Y)$ (as will follow from \cref{the:simplicial groth construction}). 
 \end{remone}
 
 Using this remark, we can now give the definition of the desired functor, equivalent to $\sint_\C$.
 Here we use the fact that a map $F(n) \times \Delta[l] \to N\C$ corresponds to a functor $[n] \to \C$.
 
 \begin{defone} \label{def:sbI}
  Let 
  $$\sbI_\C: \Fun(\C,\s) \to \ss_{/N\C}$$
  be the functor that takes a functor $G: \C \to \s$ to the simplicial space $\sbI(G) \to N\C$ whose fiber over 
  $p: F(n) \times \Delta[l] \to N\C$ is given by 
  $$ \Nat(N([n] \times_{\C} \C_{/-}) \times \{ \Delta[l] \},G)$$
  Here $[n] \to \C$ is the functor that corresponds to the map $p:F(n) \times \Delta[l] \to N\C$ 
 \end{defone}

  It will follow from \cref{the:simplicial groth construction} that for a projectively fibrant functor $G$, 
  $\sbI_\C(G)$ is in fact a left fibration and equivalent to $\sint_\C G$, giving us a fibrant replacement. 
  \par 
   As we want to show $\sbI_\C$ is in fact a right Quillen functor, we also require its left adjoint. 
 
 \begin{defone} \label{def:sbT}
  Let 
  $$\sbT_\C: \ss_{/N\C} \to \Fun(\C,\s)$$
  be the functor that takes a map $Y \to N\C$ to the functor $\sbT(Y \to N\C)$ that takes value 
  $$\sbT_\C(Y \to N\C)(c) = \Diag^*(N\C_{/c} \times_{N\C} Y) = N\C_{/c} \times_{N\C} \Diag^*(Y)$$
 \end{defone}
 
 \begin{propone}
  There is a simplicially enriched adjunction 
  \begin{center}
   \adjun{\ss_{/N\C}}{\Fun(\C,\s)}{\sbT_\C}{\sbI_\C}
  \end{center}
  .
 \end{propone}

 \begin{proof}
  The adjunction is induced by the functor that takes $p:F(n) \times \Delta[l] \to N\C$ to 
  $\sbT_\C(p) = N([n] \times_{\C} \C_{/-}) \times \{ \Delta[l] \}$.
 \end{proof}

 We now want to prove that these adjunctions are Quillen adjunctions. For that we first need to show they have the 
 correct strictness properties.
 
 \begin{lemone} \label{lemma:props of sint}
  The functor $\sint_\C$ takes (trivial) projective cofibrations to (trivial) covariant cofibrations.
 \end{lemone}
 
 \begin{proof}
   It suffices to check $\sint_\C$ preserves the generating cofibrations and trivial cofibrations. 
   Observe that $\sint_\C$ takes the generating cofibrations 
   $$\partial \Delta[n] \times \Hom_\C(a,-) \to \Delta[n] \times \Hom_\C(a,-)$$
   to the cofibration 
   $$\partial \Delta[n] \times N(\C_{c/}) \to \Delta[n] \times N(\C_{c/})$$
   and similarly the generating trivial cofibrations 
   $$\Lambda[n]_i \times \Hom_\C(a,-) \to \Delta[n] \times \Hom_\C(a,-)$$
   to the trivial cofibration (in the covariant model structure over $N\C$):
   $$\Lambda[n]_i \times N(\C_{c/}) \to \Delta[n] \times N(\C_{c/}).$$
 \end{proof}

 \begin{lemone} \label{lemma:sbI proj fib to left fib}
  The functor $\sbI$ takes (trivial) projective fibrations to (trivial) left fibrations.
 \end{lemone}
 
 \begin{proof}
  Let $\alpha: G \to H$ be a projective fibration. We need to prove that $\sbI(\alpha)$ has the right lifting property with respect to maps 
  \begin{itemize}
   \item $(\partial F(n) \to F(n)) \square (\Lambda[l]_i \to \Delta[l])$,
   \item $(F(0) \to F(n)) \square (\partial \Delta[l] \to \Delta[l])$.
  \end{itemize}
  Using the adjunction $(\sbT \dashv \sbI)$ this is equivalent to proving that 
  \begin{itemize}
   \item $\sbT((\partial F(n) \to F(n)) \square (\Lambda[l]_i \to \Delta[l]))$,
   \item $\sbT((F(0) \to F(n)) \square (\partial \Delta[l] \to \Delta[l]))$
  \end{itemize}
  are trivial projective cofibrations in $\Fun(\C,\s)$.
  \par 
  By direct computation these are equal to
  \begin{itemize}
   \item $\sbT(\partial F(n) \to F(n)) \square \{\Lambda[l]_i\} \to \{\Delta[l]\}$,
   \item $\sbT(F(0) \to F(n)) \square \{\partial \Delta[l]\} \to \{\Delta[l]\}$.
  \end{itemize}
  The map $\{\Lambda[l]_i\} \to \{\Delta[l]\}$ is a trivial projective cofibration and $\{\partial \Delta[l]\} \to \{\Delta[l]\}$ 
  is a projective cofibration. Hence, by \cref{rem:projective properties}, 
  it suffices to prove that $\sbT(\partial F(n) \to F(n))$ is a projective cofibration 
  and $\sbT(F(0) \to F(n))$ is a projective trivial cofibration. 
  \par 
  For the first part notice for $n \geq 2$, $\sbT(\partial F(n) \to F(n))$ is the identity. For the other two cases we have
  \begin{itemize}
   \item $n=0$: $\sbT(\partial F(0) \to F(0) \xrightarrow{ \{c\} } N\C) = \{ \emptyset \} \to \Hom_\C(c,-)$,
   \item $n=1$: $\sbT(\partial F(1) \to F(1) \xrightarrow{ \{ f: c \to d \} } N\C) = \Hom(c,-) \coprod \Hom(d,-) \to N([1] \times_\C \C_{/-})$,
  \end{itemize}
  both of which are projective cofibrations. 
  \par 
  For the second part notice we have
  $$\sbT(F(0) \to F(n) \xrightarrow{ \{c_0 \to ... \to c_n \} } N\C) = \Hom(c_0,-) \to N([n] \times_\C \C_{/-})$$ 
  which is a trivial projective cofibration. 
 \end{proof}
 
  \begin{theone} \label{the:simplicial groth construction}
  Let $\C$ be a small category.
   The two simplicially enriched adjunctions  
    \begin{center}
    \begin{tikzcd}[row sep=0.5in, column sep=0.9in]
     \Fun(\C,\s)^{proj} \arrow[r, shift left = 1.8, "\sint_\C"] & 
     (\ss_{/N\C})^{cov} \arrow[l, shift left=1.8, "\sH_\C", "\bot"'] \arrow[r, shift left=1.8, "\sbT_\C"] &
     \Fun(\C,\s)^{proj} \arrow[l, shift left=1.8, "\sbI_\C", "\bot"']
    \end{tikzcd}
   \end{center}
   are Quillen equivalences. Here $\Fun(\C,\s)$ has the projective model structure and $\ss_{/N\C}$ has the covariant model structure over $N\C$.
  \end{theone}
 
  \begin{proof}
   First we show both are Quillen adjunctions. 
   By \cref{lemma:props of sint}, $\sint_\C$ preserves cofibrations and trivial cofibrations and so is a left Quillen functor.
   On the other hand, by \cref{lemma:fib between left fib}, a fibration between fibrant objects in the covariant model structure is a left fibration. 
   By \cref{lemma:sbI proj fib to left fib}, $\sbI_\C$ takes projective fibrations to left fibration, which means it 
   takes projective fibrations between fibrant objects to 
   covariant fibrations. By the same lemma, $\sbI_\C$ takes trivial projective fibrations to trivial covariant fibrations. Hence, by 
   \cref{Lemma For Quillen adj}, it is a right Quillen functor. 
   \par 
   We move on to prove they are Quillen equivalences. Notice, the composition functor 
   $\sbT_\C \circ \sint_\C: \Fun(\C,\s) \to \Fun(\C,\s)$ is a colimit preserving functor that takes $\Hom(c,-) \times \{\Delta[l]\}$
   to the functor $N(\C_{c/} \times_\C \C_{/-}) \times \{\Delta[l]\}$, which is naturally equivalent to $\Hom(c,-) \times  \{\Delta[l]\}$.
   Hence, the composition functor is naturally weakly equivalent to the identity and so a Quillen equivalence. Thus in order to prove both 
   adjunctions are Quillen equivalences by $2$-out-of-$3$ it suffices to prove $\sint_\C \dashv \sH_\C$ is a Quillen equivalence.
   \par
   By \cref{Lemma For Quillen equiv}, it suffices to prove that the derived counit map is an equivalence and $\sint_\C$ reflects weak equivalences. 
   We already proved that $\sint_\C$ reflects weak equivalences in \cref{lemma:sint reflect equiv}. 
   Let $L \to N\C$ be a left fibration. We want to prove that 
   $\sint_\C \sH_\C L \to L$ is a covariant equivalence. We will in fact prove it is a Reedy equivalence. It suffices to do so fiber-wise. 
   \par 
   Fix a map $F(n) \to N\C$ that we can represent by a diagram $c_0 \to ... \to c_n$. As $L$ is a left fibration we have an equivalence of spaces
   $$\Map_{/N\C}(F(n),L) \to \Map_{/N\C}(\{c_0\},L)$$
   Moreover, by \cref{rem:sint value}, 
   $$\Map_{/N\C}(F(n),\sint_\C \sH_\C L) \cong \sH_\C(c_0) = \Map_{/N\C}(\C_{c_0/},L).$$
   Hence in order to finish the proof, we only have to show the map 
    $$\Map_{/N\C}(N\C_{c/},L) \to \Map_{/N\C}(\{c\},L)$$
   is a Kan equivalence. However, this is precisely the statement of the Yoneda lemma for Segal spaces (\cref{the:yoneda for Segal spaces}).
  \end{proof}

 \begin{remone}
  It is interesting to note how this result compares to a similar result in \cite[Theorem C]{heutsmoerdijk2015leftfibrationi}.
  There the authors study a functor very similar to $\sint_\C$ using quasi-categories,
  however, as they are using simplicial sets, their functor $h_!$ is the diagonal of the level-wise Grothendieck construction. 
  Thus, they cannot simply take a Reedy fibrant replacement (as we did in \cref{lemma:sint reflect equiv}) to get a left fibration and thus have to apply 
  more complicated techniques. 
 \end{remone}

 The Quillen equivalence can help us find fibrant replacements.
 
 \begin{corone} \label{cor:fibrant rep via unit} 
  Let $Y \to N\C$ be a map of simplicial spaces. Then the derived unit map $Y \to \sbI_\C R \sbT_\C Y$ is the covariant fibrant 
  replacement of $Y \to N\C$. 
 \end{corone}

 There is one key example which we want to consider more explicitly. 
 
  Let $\C = [n]$. Then $N\C = F(n)$  (using \cref{not:drop the i}) and so the result implies that we have Quillen equivalences 
   \begin{center}
    \begin{tikzcd}[row sep=0.5in, column sep=0.9in]
     \Fun(\reb n \leb,\s)^{proj} \arrow[r, shift left = 1.8, "\sint_{\reb n \leb}"] & 
     (\ss_{/F(n)})^{cov} \arrow[l, shift left=1.8, "\sH_\C", "\bot"'] \arrow[r, shift left=1.8, "\sbT_{\reb n \leb}"] &
     \Fun( \reb n \leb ,\s)^{proj} \arrow[l, shift left=1.8, "\sbI_{\reb n \leb}", "\bot"']
    \end{tikzcd}
    .
   \end{center}

 This result has important corollaries that we will use extensively.
 
 \begin{corone} \label{cor:covariant equiv over F n}
  A map of simplicial spaces $X \to Y$ over $F(n)$ is a covariant equivalence if and only if for all maps 
  $\ordered{0,...,i}:F(i) \to F(n)$ the induced map 
  $$F(i) \times_{F(n)} X \to F(i) \times_{F(n)} Y$$
  is a covariant equivalence for all $0 \leq i \leq n$.
 \end{corone}

 \begin{proof}
 By direct computation $N([n]_{/i}) \to N([n])$ is the exactly the map $\ordered{0,...,i}:F(i) \to F(n)$.
 The result now follows from \cref{the:simplicial groth construction}.
 \end{proof}
 
 \begin{corone} \label{cor:left fib over NC colimit}
  Every left fibration $L \to F(n)$ is Reedy equivalent to a colimit of left fibrations of the form 
  $(\ordered{0,...,i} \circ \pi_1) : F(i) \times \Delta[l] \to F(n)$. 
 \end{corone}

 \begin{proof}
  Let $L \to F(n)$ be a left fibration. Then by \cref{the:simplicial groth construction} there exists a functor 
  $G: [n] \to \s$ and a Reedy equivalence $L \simeq \sint_{[n]} G$ over $F(n)$ (concretely we can take $G = \sH_\C(L)$). 
  But $G$ is a simplicial presheaf and so there is an isomorphism $G \cong \colim (\Hom([i],-) \times \Delta[l])$ and so 
  $$L \simeq \sint_{[n]} G \cong \colim (\sint_{[n]} \Hom([i],-) \times \Delta[l]) \cong
  \colim ((\ordered{0,...,i} \circ \pi_1) : F(i) \times \Delta[l] \to F(n))$$
  giving us the desired result. 
 \end{proof}
 
 \begin{corone}
  Let $L \to F(n) \times \Delta[l]$ be a left fibration. Then there is a Reedy equivalence 
  $$L \simeq \colim (
  (\ordered{0,...,i} \times \id_{\Delta[j]}) \circ \pi_1) [F(i) \times \Delta[l]) \times \Delta[j] \to F(n) \times \Delta[l]] $$
  over $F(n) \times \Delta[l]$.
 \end{corone}
 
 \begin{proof}
  The projection map $\pi_1: F(n) \times \Delta[l] \to F(n)$ is a Reedy equivalence and so by \cref{The Base change covar} gives us a Quillen equivalence 
  \begin{center}
   \adjun{(\ss_{/F(n) \times \Delta[l]})^{cov}}{(\ss_{/F(n)})^{cov}}{(\pi_1)_!}{(\pi_1)^*}
  \end{center}
   which implies that there exists a left fibration $\hat{L} \to F(n)$ and a Reedy equivalence $L \simeq \hat{L} \times \Delta[l]$ over $F(n) \times \Delta[l]$.
   The result now follows from the previous corollary.
 \end{proof}

  In the coming sections we will need the contravariant version of fibrations, {\it right fibrations}.
  
  \begin{remone} \label{rem:right fibrations}
     Until now we have focused on the covariant approach to fibrations. However, there is also a contravariant analogue. 
   Instead of repeating all the previous arguments, we will introduce following table that can help us translate all previous results.
   \renewcommand{\arraystretch}{2.0}
   \begin{center}
    \begin{tabular}{|c|c|}
     \hline 
     \ Left Fibration \ & \ Right Fibration \\  
     \ $(p_n, \ordered{0}):   Y_n \xrightarrow{ \ \simeq \ } X_n \times_{X_0} Y_0$ \ & 
     \ $(p_n, \ordered{n}):   Y_n \xrightarrow{ \ \simeq \ } X_n \times_{X_0} Y_0$ \ \\ \hline
     \ Covariant Model Structure \ & \ Contravariant Model Structure \ \\ \hline 
     \ Under-Segal Space \ & \ Over-Segal Space \ \\
     \ $W_{x/} = F(0) \strut^{\{x\}} \times^{s}_W W^{F(1)}$ \ & \ $W_{/x} = W^{F(1)} \strut^{t}\times^{\{x\}} F(0)$ \ \\ \hline  
     \ initial object \ & \ final object \ \\ \hline
     \ $\sint_\C: \Fun(\C,\s)^{proj} \to (\ss_{/N\C})^{cov}$ \ & \ $\sint^{op}_\C:\Fun(\C^{op},\s)^{proj} \to (\ss_{/N\C})^{contra}$ \ \\ \hline
    \end{tabular}
   \end{center}
  \end{remone}
 
  Having defined left and right fibrations, we can use our previous results to generalize
   \cref{rem:grothendieck fib and opfib} from categories to simplicial spaces.
 
   \begin{theone} \label{The Left is right if over line}
   Let $p: L \to X$ be a left fibration. Then the following are equivalent:
   \begin{enumerate}
    \item $p$ is a right fibration.
    \item For every map $f: F(1) \to X$ the map $f^*L \to F(1)$ is a right fibration.
    \item $p$ is a diagonal fibration.
    \item For every map $f: F(1) \to X$ the map $f^*L \to F(1)$ is a diagonal fibration.
   \end{enumerate}
  \end{theone}
  \begin{proof}
   By \cref{Lemma Local def of Left Fib}, \cref{cor:local def of diag Fib} 
   $p$ is a left, right or diagonal fibration if and only if $f^*p: f^*L \to F(n)$ is such a fibration for every map $F(n) \to X$.
   However, by \cref{the:simplicial groth construction} for every left fibration $L \to F(n)$ there exists a functor $G: [n] \to \s$ 
   and Reedy equivalence $\sint_{[n]} G \cong L$ over $F(n)$.
   Thus for the remainder of the proof we will assume $X = F(n)$ and our left fibration is of the form $\sint_{[n]} G \to F(n)$ for a functor 
   $G:[n] \to \s$, which itself is isomorphic to a simplicial object $G_\bullet:[n] \to \set$.
   
   \medskip
   
   {\it (1 $\Leftrightarrow$ 2)}
   By \cref{lemma:Grothendieck opfib is left fib}, $\sint_{[n]} G \to F(n)$ is a right fibration if and only if $\int_{[n]} G_l \to [n]$ is 
   a Grothendieck fibration for all 
   $G_l:[n] \to \set$. By \cref{rem:grothendieck fib and opfib}, this is equivalent to $G:[n] \to \s$ taking every morphism to an isomorphism. 
   This is equivalent to $G|_{\{i,i+1\}}: [1] \to \s$ taking the morphism to an isomorphism for all $0 \leq i < n$, 
   which, again by \cref{rem:grothendieck fib and opfib}, is equivalent to $\sint_{[1]} G|_{\{i,i+1\}} \to F(1)$ being a 
   right fibration for all $0 \leq i < n$.
   Hence, it is equivalent to the statement that for every map $f: F(1) \to X$ the map $f^*L \to F(1)$ is a right fibration.
   
   \medskip
   
   {\it (1 $\Leftrightarrow$ 3)}
   Every diagonal fibration is a left fibration and right fibration. On the other hand, if $\sint_{[n]} G \to F(n)$ is a left and right 
   fibration then, by the argument in the previous paragraph, $G: [n] \to \s$ takes values in isomorphisms. Thus the evident natural transformation 
   $$\{ G(0) \} \Rightarrow G$$
   from the constant functor $\{ G(0) \}: [n] \to \s$ is an isomorphism, which by \cref{the:simplicial groth construction} means 
   $\sint_{[n]} \{G(0)\} \to \sint_{[n]} G$ is an isomorphism over $F(n)$. 
   \par 
   However, by direct computation $\sint_{[n]} \{G(0)\} = G(0) \times F(n) \xrightarrow{ \ \pi_2 \ } F(n)$, which is a diagonal fibration, 
   as $G(0)$ is a Kan complex. Hence, $\sint_{[n]} G \to F(n)$ is a diagonal fibration as well.
   
   \medskip
   
   {\it (2 $\Leftrightarrow$ 4)}
   We can use the same argument as in the previous part. 
  \end{proof}

 \subsection{The Yoneda Lemma} \label{Subsec The Yoneda Lemma}
 We are finally in a position to prove the recognition principle for covariant equivalences.
 The proof has three main steps:
 \begin{enumerate}
  \item Prove how right and left fibrations interact: \cref{The Pullback preserves covar equiv}.
  \item Characterize covariant equivalences between left fibrations: \cref{The Condition for covar equiv between left fib}.
  \item Prove the recognition principle for covariant equivalences: \cref{the:covar equiv over simp space}.
 \end{enumerate}
   
  However, before we can start we need one technical lemma.
  
 \begin{lemone} \label{lemma:pullback right fib reduction}
  Let $X$ be a simplicial space and $\mathcal{L} = \{ A \to N(\C) \}$ be a set of monomorphisms in $\ss$.
  Let $(\ss_{/X}, \M_X)$ be the left Bousfield localization model structure of the induced Reedy model structure 
  with respect to the set of monomorphism $\mathcal{L} = \{ A \to N(\C) \to X \}$.
  Then the following are equivalent:
  \begin{enumerate}
   \item
   For every right fibration $p: R \to X$ the adjunction 
   \begin{center}
    \adjun{(\ss_{/X})^{\M_X}}{(\ss_{/X})^{\M_X}}{p_!p^*}{p_*p^*}
   \end{center}
   is a Quillen adjunction. 
   \item 
   For every object $c$ in $\C$ and map $ i: A \to N(\C)$ in $\mathcal{L}$ the pullback map 
   $$N(\pi)^*(i): N(\pi)^*(A) \to N(\C_{/c})$$
   is a trivial cofibration in $(\ss_{/X})^{\M_X}$. Here $\pi: \C_{/c} \to \C$ is the projection map.
  \end{enumerate}

 \end{lemone}

 \begin{proof}
 
 $ (1 \Rightarrow 2)$ 
 This is just the special case of $(1)$ applied to the right fibration $N\C_{/c} \to N\C$.
 
 \medskip 
 
 $(2 \Rightarrow 1)$
 The proof consists of several reduction steps.
  
  \medskip
  
  {\it (I) Reduce to Fibrant Objects:}
  First, by \cref{Cor Quillen adj for localizations} it suffices to show $p_!p^*$ preserves cofibrations,  $p_*p^*$ preserves Reedy fibrations 
  and fibrant objects $Y \to X$. The fact that $p_!p^*$ preserves cofibrations and that $p_*p^*$ preserves Reedy fibrations follows from the fact
  that $(p_!p^*,p_*p^*)$ is a Quillen adjunction when both sides just have the induced Reedy model structure, as the 
  Reedy model structure is right proper (\cref{Subsec Reedy Model Structure}). 
  So, we only have to prove that for every 
  fibrant object $Y \to X$, $p_!p^*(Y) \to X$ is also fibrant.
  
  \medskip 
  
  {\it (II) Reduce to Local Objects:}
  Next notice that $p_!p^*(Y) \to X$ is fibrant if and only if it is a Reedy fibrant and local with respect to maps $ A \to N(\C) \to X$, where 
  $i: A \to N(\C)$ is in $\mathcal{L}$. Again the Reedy fibrancy follows from the previous paragraph and so it suffices to prove that $p_!p^*(Y) \to X$ is local.
  Thus we need to prove that 
  $$ i^*: \Map_{/X}(N(\C),p_!p^*(Y)) \to \Map_{/X}(A,p_!p^*(Y))$$
  is a Kan equivalence.
  
  \medskip
  
  {\it (III) Reduce to Local Trivial Cofibration:}
  Using the fact that $p_!p^*$ has a left adjoint this is equivalent to 
  $$ (p_!p^*i)^*: \Map_{/X}(p_!p^*N(\C),Y) \to \Map_{/X}(p_!p^*A,Y)$$
  being a Kan equivalence. 
  
  As the model structure $\M_X$ is simplicial and $Y \to X$ is an arbitrary fibrant object, this is equivalent to 
  $$p_!p^*i : p_!p^*A \to p_!p^*N(\C)$$
  being a weak equivalence in $(\ss_{/X})^{\M_X}$.
  
  \medskip
  
  {\it (IV) Reduce to Categorical base:} Now notice we have a Quillen adjunction 
  \begin{center}
   \adjun{(\ss_{/N(\C)})^{\M_{N\C}}}{(\ss_{/N(\C)})^{\M_X}}{j_!}{j^*}
  \end{center}
  for every map $j: N\C \to X$. Thus in order to prove that $p_!p^*i$ is a weak equivalence over $X$ it suffices to prove 
  it is a weak equivalence over $N\C$.
  
  \medskip 
  
  {\it (V) Reduce to Representable Right Fibrations:}
  By definition $p_!p^*(N\C) = R \times_X N\C \to N\C$, which is a right fibration as right fibrations are preserved by pullbacks 
  (\cref{Lemma Pullback preserves Left fibrations}).
  However, by the contravariant analogue of \cref{cor:left fib over NC colimit}, 
  every right fibration over $N\C$ is Reedy equivalent to $\colim (N\C_{/c} \times \Delta[l])$.
  Thus we can reduce the argument to proving that 
  $$A \times_{N\C} N\C_{/c} \times \Delta[l] \to N\C_{/c} \times \Delta[l]$$
  is a weak equivalence in $\ss_{/N\C}$.
  
  \medskip 
  
  {\it (VI) Reduce to desired condition:}
  Finally, we again observe that $\M_{N\C}$ is a simplicial model structure on $\ss_{/N\C}$ which means $\Delta[l]$ is contractible. 
  Thus the previous condition is equivalent to 
    $$N(\pi)^*(A) = A \times_{N\C} N\C_{/c} \to N\C_{/c} $$
    being a trivial cofibration in $\ss_{/N\C}$ in the $\M_{N\C}$ model structure.
 \end{proof}
 
 \begin{remone} \label{rem:pullback right fib reduction}
  We can use the same argument to prove an analogous result for pulling back along left fibrations. 
  Concretely, 
  \begin{center}
    \adjun{(\ss_{/X})^{\M_X}}{(\ss_{/X})^{\M_X}}{p_!p^*}{p_*p^*}
   \end{center}
   is a Quillen adjunction for every left fibration $p: L \to X$ if and only if  
   $$N(\pi)^*(i): N(\pi)^*(A) \to N(\C_{c/})$$
   is a trivial cofibration in $(\ss_{/X})^{\M_X}$ for every object $c$.
 \end{remone}

 We can now use this result to give the desired connection between right fibrations and covariant equivalences.
 
 \begin{theone} \label{The Pullback preserves covar equiv}
  Let $p: R \to X$ be a right fibration. Then the adjunction
  \begin{center}
   \adjun{(\ss_{/X})^{cov}}{(\ss_{/X})^{cov}}{p_!p^*}{p_*p^*}
  \end{center}
  is a Quillen adjunction where both sides have the covariant model structure.
 \end{theone}

 \begin{proof}
  The covariant model structure is given by localization with respect to maps $F(0) \to F(n) \to X$.
  The over-category $[n]_{/i} \to [n]$ is given by the map of simplicial spaces $\ordered{0,...,i}: F(i) \to F(n)$.
  Thus, by the previous lemma, we only need to prove that the pullback map 
  $F(0)= F(0) \times_{F(n)} F(i) \to F(i)$
  is a covariant equivalence over $F(n)$. However, that true by definition. 
 \end{proof}
 
 \begin{remone}
  By \cref{rem:pullback right fib reduction} and the analogous argument to \cref{The Pullback preserves covar equiv}, 
  for every left fibration $p: L \to X$, we get a Quillen adjunction $(p_!p^*,p_*p^*)$ between contravariant model structures.
 \end{remone}

 \begin{remone}
  This result has been proven independently in the context of quasi-categories by 
  Lurie \cite[Proposition 4.1.2.15]{lurie2009htt},
  Joyal \cite[Theorem 11.9]{joyal2008theory} 
  and Nguyen \cite[Proposition 4.12]{nguyen2019covariant}.
 \end{remone}

 We can also use this lemma to also prove a relationship between right fibrations and complete Segal spaces.
 
 \begin{theone} \label{the:pullback of right fib over CSS equiv} 
  Let $W$ be a Segal space and $p: R \to W$ be a right or left fibration. Then the adjunction
  \begin{center}
   \adjun{(\ss_{/W})^{Seg}}{(\ss_{/R})^{Seg}}{p^*}{p_*}
  \end{center}
  is a Quillen adjunction where both sides have the induced Segal space model structure (\cref{prop:induced model structure}).
  \par 
  If $W$ is also complete, then the same statement holds for the adjunction
  \begin{center}
   \adjun{(\ss_{/W})^{CSS}}{(\ss_{/R})^{CSS}}{p^*}{p_*}
  \end{center}
  where now both sides have the induced complete Segal space model structure (\cref{prop:induced model structure}).
 \end{theone}

 \begin{proof}
  We will assume $p$ is a right fibration. The argument for left fibration follows similarly, using 
  the adjustment in \cref{rem:pullback right fib reduction}.
  
  Let $W$ be a Segal space. We can extend the adjunction above as follows 
  \begin{center}
   \begin{tikzcd}[row sep=0.5in, column sep=0.5in]
    (\ss_{/W})^{Seg} \arrow[r, shift left = 1.8, "p^*"] & 
    (\ss_{/R})^{Seg} \arrow[r, shift left = 1.8, "p_!"] \arrow[l, shift left=1.8, "p_*", "\bot"'] & 
    (\ss_{/W})^{Seg} \arrow[l, shift left=1.8, "p^*", "\bot"'] 
   \end{tikzcd}
   .
  \end{center}
  In order to show that $(p^*,p_*)$ is a Quillen adjunction, we have to prove $p^*$ preserves cofibrations and trivial Segal cofibrations. 
  It is evident that $p^*$ preserves cofibrations, as they are just monomorphisms. Moreover, 
  by \cref{prop:induced model structure}, $p_!$ reflects trivial Segal cofibration. 
  Hence $p^*$ preserves trivial cofibrations if and only if $p_!p^*$ preserves trivial cofibrations, which is equivalent to proving that 
  \begin{center}
   \adjun{(\ss_{/W})^{Seg}}{(\ss_{/W})^{Seg}}{p_!p^*}{p_*p^*}
  \end{center}
  is a Quillen adjunction, 
  where both sides have the Segal space model structure. We can thus apply \cref{lemma:pullback right fib reduction}.
  
  By \cref{the:induced vs localized} the induced Segal space model structure over a Segal space is just given by localizing with respect to the maps
  $G(n) \to F(n) \to W$. Thus we only need to check the map $G(n) \to F(n)$ satisfies the desired condition in \cref{lemma:pullback right fib reduction}. 
  We know the over-category over $i$ is given by $\ordered{0,...,i}: F(i) \to F(n)$.
  Thus we only need to show that 
  $G(i) = G(n) \times_{F(n)} F(i) \to F(i)$
  is an equivalence in the Segal space model structure, which is true by definition.
  \par 
  Now let us assume also in addition that $W$ is complete. By the explanation given at the beginning of the proof, it suffices to show that 
  \begin{center}
   \adjun{(\ss_{/W})^{CSS}}{(\ss_{/W})^{CSS}}{p_!p^*}{p_*p^*}
  \end{center}
  is a Quillen adjunction, where both sides have the complete Segal space model structure. 
  This means we can again use \cref{lemma:pullback right fib reduction}.
  
  By \cref{the:induced vs localized} the induced complete Segal space model structure 
  is given by localizing with respect to maps $G(n) \to F(n) \to W$ and 
  $F(0) \to E(1) \to W$. We already observed that $G(n) \to F(n)$ satisfies the condition of \cref{lemma:pullback right fib reduction} 
  so we only need to prove the same statement for the map $F(0) \to E(1)$.
  \par 
  However, $E(1) = N(I[1])$, where $I[1]$ is the category with two objects and one unique isomorphism (\cref{Def Of the E spaces}).
  By direct computation $I[1]_{/0} = I[1]_{/1} = I[1]$ and so the projection map from the over-category is just the identity map.
  Hence we are done.
 \end{proof}

 This theorem has following useful corollary.
 
 \begin{corone}
  Let following diagram be given
  \begin{center}
   \begin{tikzcd}[row sep=0.5in, column sep=0.5in]
    p^*X \arrow[r, "p^*f"] \arrow[d] & L \arrow[d, "p"] \\
    X \arrow[r, "\simeq"', "f"] & W
   \end{tikzcd}
  \end{center}
   where $W$ is a complete Segal spaces, $p:L \to W$ is a left or right fibration and $f$ is a complete Segal space equivalence. 
   Then $p^*f: p^*X \to L$ is a complete Segal space equivalence.  
 \end{corone}

 \begin{remone}
  The assumptions in the previous theorem seem too strong, the result should also hold if the base simplicial space $W$ is not
  a Segal space. This is in fact correct and we will prove this in \cref{The Pullback preserves CSS equiv} / \cref{The Pullback preserves Seg equiv}. 
  However, before we can do that we need to understand 
  the invariance of left fibrations with respect to complete Segal space equivalences, which is the goal of 
  \cref{The Covar invariant under CSS equiv}. 
 \end{remone}
 
 We can now move on to the second step and characterize covariant equivalences between left fibrations.
 
 \begin{theone} \label{The Condition for covar equiv between left fib}
  Let $L \to X$ and $L' \to X$ be left fibrations and $f:L \to L'$ a map over $X$. 
  The following are equivalent:
  \begin{enumerate}
   \item $f$ is a covariant equivalence.
   \item $f$ is a Reedy equivalence.
   \item $f$ is a Kan equivalence.
   \item $f$ is a fiber-wise Reedy equivalence ($f \times_X F(0): L \times_X F(0) \to L' \times_X F(0)$ is a Reedy equivalence for every map $F(0) \to X$).
   \item $f$ is a fiberwise Kan equivalence ($f \times_X F(0): L \times_X F(0) \to L' \times_X F(0)$ is a Kan equivalence for every map $F(0) \to X$).
   \item $f$ is a fiberwise diagonal equivalence ($f \times_X F(0): L \times_X F(0) \to L' \times_X F(0)$ is a diagonal equivalence for every map $F(0) \to X$).
  \end{enumerate}
 \end{theone}

 \begin{remone} \label{Rem Equivalences of left fibrations as functors} 
  By \cref{the:simplicial groth construction}, a left fibration $L \to N\C$ is Reedy equivalent to $\sint_\C G \to N\C$ for some functor 
  $G: \C \to \s$. Thus a map of left fibrations $L \to L'$ over $N\C$ is an equivalence if and only if the corresponding natural 
  transformation $G \to G'$ is an equivalence.
  \par 
  \cref{The Condition for covar equiv between left fib} can thus be seen as a generalization of this observation to an arbitrary simplicial space $X$:
  We are comparing two left fibrations over $X$, by comparing their fibers, which we should think of as their ``values".
 \end{remone}

 \begin{proof}
 {\it (1 $\Leftrightarrow$ 2)} Follows from the definition of localization as left fibrations are the fibrant objects
 in the covariant model structure (\cref{The Covariant Model Structure}).
 
 \medskip
 
 {\it (2 $\Leftrightarrow$ 3)} Clearly $(2)$ implies $(3)$. For the other side 
 let $f$ be a Kan equivalence, then $f_0: L_0 \to L'_0$ is a Kan equivalence of spaces.
 This implies that in the diagram  
 \begin{center}
  \begin{tikzcd}[row sep=0.5in, column sep=0.5in]
   Y_n \arrow[d, "\simeq"] \arrow[r, "f_n"] & Z_n \arrow[d, "\simeq"] \\
   Y_0 \underset{X_0}{\times} X_n \arrow[r,"(f_0 \comma id)", "\simeq"'] & Z_0 \underset{X_0}{\times} X_n
  \end{tikzcd}
 \end{center}
 the two vertical maps and the bottom horizontal map are Kan equivalences.
 Thus $f_n: Y_n \to Z_n$ is a Kan equivalence as well, which implies that $f$ is a Reedy equivalence.
 
 \medskip 
 
 {\it (2 $\Leftrightarrow$ 4)} This is the generalization of \cref{Cor Fiberwise equiv for Kan fib} to simplicial spaces. 
 
 \medskip 
 
 {\it (3 $\Leftrightarrow$ 5)} This is precisely the statement of \cref{Cor Fiberwise equiv for Kan fib}.
  
  \medskip 
  
 {\it (4 $\Leftrightarrow$ 6 )} By \cref{Lemma Pullback preserves Left fibrations}, $F(0) \times_X L \to F(0)$ is a left fibration, 
 which by \cref{Ex Covariant over the point} 
 means that $L \times_X F(0)$ is diagonally fibrant. Thus $f \times_X F(0): L \times_X F(0) \to L' \times_X F(0)$ is a 
 Reedy equivalence if and only if it is a diagonal equivalence (\cref{The Diagonal Model Structure}).
  \end{proof}
  
 \begin{remone}
  Note it suffices to prove that there exists $x: F(0) \to X$ such that $f \times_X F(0)$ fiberwise Kan equivalence for every path component of 
  $X_0$. Indeed, if $x$ and $y$ are in the same path-component then we have an equivalence of fibers 
  $f_0 \underset{X_0}{\times}^x \Delta[0] \simeq f_0 \underset{X_0}{\times}^y \Delta[0]$.
 \end{remone}

  We can now move on to the general case.
    
\begin{theone} \label{the:condition for covar equiv general rep}  
 $p:Y \to X$ be a map of simplicial spaces. 
 For every $x: F(0) \to X$,  
 there is a natural zig-zag of diagonal equivalences 
 $$R_x \underset{X}{\times} Y \xrightarrow{ \ \simeq \ } R_x \underset{X}{\times} \hat{Y} \xleftarrow{ \ \simeq \ } F(0) \underset{X}{\times} \hat{Y}$$
 Here $i: Y \to \hat{Y}$ is a choice of a left fibrant replacement of $Y$ over $X$ and $R_x \to X$ is a contravariant fibrant replacement of 
 $\{  x \} : F(0) \to X$.
\end{theone}
 
 \begin{proof}
  Fix a covariant fibrant replacement  $i: Y \to \hat{Y}$ over $X$.
  Then we have the following zig-zag of equivalences
  $$ Y \underset{X}{\times} R_x  \xrightarrow{ \ \ cov\simeq \ \ } \hat{Y} \underset{X}{\times} R_x  \xleftarrow{ \ \ contra\simeq \ \ } 
  \hat{Y} \underset{X}{\times} F(0)$$
  By \cref{The Pullback preserves covar equiv} the first map is a covariant equivalence because $R_x \to X$ is a right fibration. 
  By the covariant version of the same lemma the second map is a contravariant equivalence because $\hat{Y} \to X$ is a left fibration.
  So, by \cref{The Diag is local of Covar}, both are diagonal equivalences. 
 \end{proof}
  
  In the case $X$ is a Segal space, we can replace the zig-zag of equivalences with an actual map.
  
 \begin{remone} \label{rem:yoneda left fib}
  Let $X$ be a Segal space. Let $p: Y \to X$ be a map of simplicial spaces and 
  $Y \xrightarrow{ \ i \ } \hat{Y} \xrightarrow{ \ \hat{p} \ } X$ its fibrant replacement. 
  Then, by \cref{the:yoneda for Segal spaces}, $\{\id_X \}:F(0) \to F(0) \times_X X^{F(1)}$ is a covariant fibrant replacement of $\{x\}:F(0) \to X$ 
  over $X$. Now we have following diagram:
  \begin{center}
   \begin{tikzcd}[row sep=0.5in, column sep=0.8in]
    Y \ ^p\underset{X}{\times}^s (X^{F(1)} \ ^t\underset{X}{\times}^{\{x\}} F(0)) 
    \arrow[r, "i \underset{X}{\times} X^{F(1)} \underset{X}{\times} F(0)", "\simeq"'] &
    \hat{Y} \ ^{\hat{p}}\underset{X}{\times}^s (X^{F(1)} \ ^t\underset{X}{\times}^{\{x\}} F(0)) \arrow[d, "sec"', bend right = 30, dashed]  &
    \hat{Y} \ ^{\hat{p}}\underset{X}{\times}^{\{x\}} F(0) \arrow[l, "\hat{Y} \underset{X}{\times} \{ id_X \}"', "\simeq"] \\
    & \hat{Y}^{F(1)} \ ^{t (\hat{p}^{F(1)})}\underset{X}{\times}^{\{x\}} F(0) \arrow[u, twoheadrightarrow, "\simeq"] \arrow[ur, "s"'] & 
   \end{tikzcd}
  \end{center}
   By \cref{lemma:left fib as exp}, the map $\hat{Y}^{F(1)} \to \hat{Y} \times_X X^{F(1)}$ is a trivial Reedy fibration and so we can pick a section 
   $$sec: \hat{Y} \ ^{\hat{p}}\underset{X}{\times}^s X^{F(1)} \ ^t\underset{X}{\times}^{\{x\}} F(0) \to 
   \hat{Y}^{F(1)} \ ^{t (\hat{p}^{F(1)})}\underset{X}{\times}^{\{x\}} F(0).$$
   By \cref{the:condition for covar equiv general rep}, $\hat{Y} \underset{X}{\times} \{ id_X \}$ is a diagonal equivalence and so, by $2$-out-of-$3$,
   $s$ is a diagonal equivalence. 
   Hence 
   $$s \circ sec \circ (i \underset{X}{\times} X_{/x}): 
   Y \underset{X}{\times} X_{/x} \to    \hat{Y} \underset{X}{\times} F(0)$$ 
   is the desired diagonal equivalence.
 \end{remone}

 We can finally prove the recognition principle for covariant equivalences.
 
 \begin{theone} \label{the:covar equiv over simp space}
  Let $g: Y \to Z$ be a map over $X$ and $R_x \to X$ a choice of contravariant fibrant replacement of the map $\{x\}:F(0) \to X$.
  Then $g:Y \to Z$ over $X$ is a covariant equivalence if and only if for every $x : F(0) \to X$
  $$ R_x \underset{X}{\times} Y \to R_x \underset{X}{\times} Z$$
  is a diagonal equivalence.
 \end{theone}

  \begin{proof}  
  Let $\hat{g}: \hat{Y} \to \hat{Z}$ be a left fibrant replacement. By \cref{The Covariant Model Structure}, $g:Y \to Z$ is a covariant equivalence
  if and only if $\hat{g}:\hat{Y} \to \hat{Z}$ is a Reedy equivalence.
  We now have following diagram:
  \begin{center}
   \begin{tikzcd}[row sep=0.5in, column sep=0.5in]
    Y \underset{X}{\times} R_x \arrow[r, "g \times id"] \arrow[d, "\simeq", "i \times id"'] & 
    Z \underset{X}{\times} R_x \arrow[d, "\simeq"', "j \times id"]\\
    \hat{Y} \underset{X}{\times} R_x \arrow[r, "\hat{g} \times id"] & \hat{Z} \underset{X}{\times} R_x \\
    \hat{Y} \underset{X}{\times} F(0) \arrow[r, "\hat{g} \times id"] \arrow[u, "\simeq"'] & \hat{Z} \underset{X}{\times} F(0) \arrow[u, "\simeq"]
   \end{tikzcd}
   .
   \end{center} 
   By \cref{the:condition for covar equiv general rep}, all vertical maps are diagonal equivalences and so the top horizontal map 
   is a diagonal equivalence if and only if the bottom horizontal map is one. 
   But the bottom map is a diagonal equivalence for every $x: F(0) \to X$ if and only if $\hat{Y} \to \hat{Z}$ is a Reedy equivalence
   (\cref{The Condition for covar equiv between left fib}). Hence, we are done.
 \end{proof}
 
 In the case of Segal spaces the equivalence takes on a very simple form.
 
 \begin{corone} \label{cor:yoneda lemma Segal spaces tensor}
  Let $X$ be a Segal space and $f:Y \to Z$ a map over $X$. Then $f$ is a covariant equivalence if and only if 
  $$Y \times_X X_{/x} \to Z \times_X X_{/x}$$
  is a diagonal equivalence for every object $x$.
 \end{corone}

 \begin{proof}
  By \cref{the:yoneda for Segal spaces}, $X_{/x} \to X$ is the contravariant fibrant replacement of $\{ x \}: F(0) \to X$. 
  The result now follows from \cref{the:covar equiv over simp space}.
 \end{proof}

 \begin{remone}
  This result has also been proven by Heuts and Moerdijk \cite[Proposition G]{heutsmoerdijk2015leftfibrationi} using quasi-categories.
 \end{remone}

 \begin{remone}
  \cref{cor:yoneda lemma Segal spaces tensor} should very much remind us of the behavior of $\sbT_\C$, which takes a map 
  $Y \to N\C$ to the functor $\sbT_\C(Y)$ with value $\sbT_\C(Y)(c) = \Diag^*(Y \times_{N\C} N\C_{/c})$. 
  The functor $\sbT_\C$ was only defined over nerves of categories, but \cref{cor:yoneda lemma Segal spaces tensor} suggests that 
  a map of Segal spaces $Y \to X$ should via the covariant model structure 
  correspond to a functor with value $\Diag^*(Y \times_{X} X_{/x} )$. 
 \end{remone}

\section{Complete Segal Spaces and Covariant Model Structure} \label{Sec Complete Segal Spaces and Covariant Model Structure}
  In this section we want to study the relation between complete Segal space and left fibrations. 
  In \cref{Subsec Invariance of Covariant Model Structure under CSS Equivalences} we generalize results from 
  \cref{subsec:yoneda lemma Segal spaces} to arbitrary simplicial spaces, using the invariance of the covariant model 
  structure (\cref{The Covar invariant under CSS equiv}). 
  In \cref{Subsec Colimits Cofinality and Quillens Theorem A} we apply our previous results to study colimits in Segal spaces.
 
 \subsection{Invariance of the Covariant Model Structure}
 \label{Subsec Invariance of Covariant Model Structure under CSS Equivalences}
  Until now we have seen several results that suggest a deep connection between complete Segal spaces and left fibrations, 
  in particular over a Segal space (\cref{subsec:yoneda lemma Segal spaces}). 
  In this subsection we want to prove that these results generalize to an arbitrary simplicial space. 
  The key input is the {\it invariance theorem} for the covariant model structure, which proves that the 
  covariant model structure is invariant under equivalences in the complete Segal space model structure. 
  \begin{theone} \label{The Covar invariant under CSS equiv}
   Let $f: X \to Y$ be a CSS equivalence. Then the adjunction
   \begin{center}
     \adjun{(\ss_{/X})^{cov}}{(\ss_{/Y})^{cov}}{f_!}{f^*}
   \end{center}
   is a Quillen equivalence. Here both sides have the covariant model structure.
  \end{theone} 
  
  \begin{remone}
   As the proof is quite long here is an overview of the essential steps:
   \begin{enumerate}
    \item By the diagram in \ref{eq:quillen equivalence}, we can reduce the proof to  
    fibrant replacement maps $i: X \to \hat{X}$.
    \item We first prove the derived counit map is an equivalence in \ref{eq:counit diagram}.
    \item We then move on to the derived unit map.
    By the small object argument the proof reduces to checking for Reedy (\ref{item:reedy}), Segal (\ref{item:segal}) and 
    completeness (\ref{item:complete}) maps, of which only the 
    Segal (\ref{item:segal}) maps require a longer argument. 
    \item By direct computation we can reduce the case for Segal maps to proving that $F(0) \to G(n)$ is a covariant equivalence 
    over $G(n)$ (\ref{eq:proof map}). 
    \item We prove this by induction in (\ref{eq:induction G}).
   \end{enumerate}
  \end{remone}

  \begin{proof}
   Let 
   \begin{center}
    \begin{tikzcd}[row sep=0.6in, column sep=0.6in]
     X \arrow[r, "f", "\simeq_{CSS}"'] \arrow[d, "i"', "\simeq_{CSS}"] & Y \arrow[d, "i'", "\simeq_{CSS}"'] \\
     \hat{X} \arrow[r, "\hat{f}"', "\simeq_{Ree}"] & \hat{Y} 
    \end{tikzcd}
   \end{center}
   be a fibrant replacement of $f$ in the complete Segal space model structure. Then all maps in the diagram are equivalences 
   in the CSS model structure and so the bottom horizontal map is a Reedy equivalence as $\hat{X}$ and $\hat{Y}$ are 
   themselves complete Segal spaces (\cref{The Complete Segal Space Model Structure}). 
   \par 
   This diagram gives us following diagram of adjunctions:
    \begin{equation} \label{eq:quillen equivalence}
    \begin{tikzcd}[row sep=0.5in, column sep=0.5in]
     (\ss_{/X})^{cov} \arrow[r, shift left = 1.8, "f_!", "\bot"'] \arrow[d, shift left = 1.8, "i_!", "\rotatebot"'] & 
     (\ss_{/Y})^{cov} \arrow[l, shift left = 1.8, "f^*"] \arrow[d, shift left = 1.8, "(i')_!", "\rotatebot"'] \\
     (\ss_{/\hat{X}})^{cov} \arrow[r, shift left = 1.8, "\hat{f}_!", "\bot"'] \arrow[u, shift left = 1.8, "i^*"] & 
     (\ss_{/\hat{Y}})^{cov} \arrow[l, shift left = 1.8, "\hat{f}^*"] \arrow[u, shift left = 1.8, "(i')^*"] 
    \end{tikzcd}
    .
  \end{equation}
  By \cref{The Base change covar}, all four are Quillen adjunctions and the bottom horizontal Quillen adjunction is a Quillen equivalence.
  Thus by $2$-out-of-$3$ it suffices to prove that the two vertical Quillen adjunctions are Quillen equivalences. 
  As both Quillen adjunctions are given by a fibrant replacement map, it suffices to prove the left vertical Quillen adjunction $(i_!,i^*)$ 
  is a Quillen equivalence. 
  \par 
  We will prove that the derived unit and derived counit maps are weak equivalences.  
  First we prove that the derived counit map $i_!i^*L \to L$ is a covariant equivalence for every left fibration $p:L \to \hat{X}$
  (notice the derived counit map is the actual counit map as all objects are cofibrant). 
  \par 
  The counit map comes from the diagram 
  \begin{equation} \label{eq:counit diagram}
   \begin{tikzcd}[row sep=0.5in, column sep=0.5in]
    i^*L \arrow[r] \arrow[d] & L \arrow[d, "p"] \\
    X \arrow[r, "i"] & \hat{X}
   \end{tikzcd}
   .
  \end{equation}
  As $p$ is a left fibration over a complete Segal space and $i$ is a CSS equivalence, it follows from 
  \cref{the:pullback of right fib over CSS equiv} that $i^*L \to L$ is a
  CSS equivalence. 
  Finally, by \cref{prop:left fib over Segal is Segal fib}, a CSS equivalence over the CSS $\hat{X}$ is also a 
  covariant equivalence over $\hat{X}$ finishing the proof.
  
  Next we prove that that the derived unit map is an equivalence. For that we first observe that by the small object argument 
  \cite[Proposition 10.5.16]{hirschhorn2003modelcategories}
  the map $i: X \to \hat{X}$  is a transfinite composition
  of pushouts of coproducts of the following three classes of maps 
  \begin{enumerate}
   \item \label{item:reedy} {\bf Reedy:} $(\partial F(n) \to F(n)) \square (\Lambda[l]_i \to \Delta[L])$ 
   \item \label{item:segal} {\bf Segal:} $(G(n) \to F(n)) \square (\partial \Delta[l] \to \Delta[l])$
   \item \label{item:complete} {\bf Complete:} $(F(0) \to E(1)) \square (\partial \Delta[l] \to \Delta[l])$
  \end{enumerate}

  and thus it suffices to prove that the derived unit map is an equivalence when $f$ is one of these three classes of maps. 
  \par 
  The case for Reedy maps follows from \cref{The Base change covar}. 
  The case for complete maps follows from the fact that in the diagram of Quillen adjunctions 
  \begin{center}
   \begin{tikzcd}[row sep=0.5in, column sep=0.5in]
     \Fun([0], \s )^{proj} \arrow[r, shift left = 1.8, "\sint_{[0]}", "\bot"'] \arrow[d, shift left = 1.8, "\{ 0 \}_!", "\rotatebot"'] & 
     \ss^{cov} \arrow[l, shift left = 1.8, "\sH_{[0]}"] \arrow[d, shift left = 1.8, "\{ 0 \}_!", "\rotatebot"'] \\
     \Fun( I[1] , \s )^{proj} \arrow[r, shift left = 1.8, "\sint_{I[1]}", "\bot"'] \arrow[u, shift left = 1.8, "\{ 0 \}^*"] & 
     (\ss_{/E(1)})^{cov} \arrow[l, shift left = 1.8, "\sH_{I[1]}"] \arrow[u, shift left = 1.8, "\{ 0 \}^*"] 
    \end{tikzcd}
   \end{center}
   the horizontal maps are Quillen equivalences (\cref{the:simplicial groth construction}) 
   and the left hand is also a Quillen equivalence ($\{ 0 \}: [0] \to I[0]$ is an equivalence of categories) and so by $2$-out-of-$3$ 
   the left hand vertical adjunction is also a Quillen equivalence.  
  Hence we only need to focus on the case of Segal maps.
  \par 
  To simplify notation we denote the map $(G(n) \to F(n)) \square (\partial \Delta[l] \to \Delta[l])$ by 
  $j_n:G(n,l) \to F(n,l)$. Let $L \to G(n,l)$ be a left fibration. We want to prove the derived unit map $L \to j_n^*R(j_n)_!L$ is a Reedy equivalence.
  We have following diagram:
  \begin{center}
   \begin{tikzcd}[row sep=0.5in, column sep=0.5in]
    L \arrow[ddr, "p"', twoheadrightarrow, bend right = 20] \arrow[dr] \arrow[drrr, bend left=8, "R(j_n)_!" near start, "\simeq_{N\C}" near end]  &[-0.3in] & &  \\[-0.3in]
    & j_n^*(\pi_1)^*(\sbI_{[n]}R\sbT_{[n]} L) \arrow[r, hookrightarrow] \arrow[d, twoheadrightarrow] & 
    (\pi_1)^*\sbI_{[n]}R\sbT_{[n]} L \arrow[r] \arrow[d, twoheadrightarrow] & 
    \sbI_{[n]}R\sbT_{[n]} L \arrow[d, twoheadrightarrow] \\
    & G(n,l) \arrow[r, "j_n", hookrightarrow] & F(n,l) \arrow[r, "\pi_1"] & F(n)
   \end{tikzcd}
   .
  \end{center}
   Here $R \sbT_{[n]}L$ is the projective fibrant replacement of $\sbT_{[n]}L$.
   The map $R(j_n)_!: L \to \sbI_{[n]}R\sbT_{[n]} L$ is the covariant fibrant replacement over $F(n)$ (\cref{cor:fibrant rep via unit}) and thus 
   we only need to prove that $L \to  j_n^*(\sbI_{[n]}R\sbT_{[n]} L \times \Delta[l])$ is a covariant equivalence over $G(n,l)$. 
   By \cref{The Condition for covar equiv between left fib} it suffices to prove they are fiber-wise diagonally equivalent.
   \par 
   Fix an object $m: F(0) \to G(n,l)$. We have following chain of equivalences
   By direct computation for an object $m: F(0) \to G(n,l)$ we have 
   \begin{align*}
    (\pi_1j_n)^*(\sbI_{[n]}R\sbT_{[n]} L ) \times_{G(n,l)} F(0) 
    & \cong \sbI_{[n]}R\sbT_{[n]} L  \times_{F(n)} F(0) & \text{definition of pullback}\\
    & \cong \Nat(N([0] \strut^{ \{ m \} }\times_{[n]} [n]_{/-}) , R\sbT_{[n]} L) &  \text{\cref{def:sbI}} \\
    & \simeq \Nat(\Hom_{[n]}(m,-) , R\sbT_{[n]} L) & \text{computation}\\
    & \simeq R\sbT_{[n]} L(m) & \text{\cref{lemma:hom functor Yoneda}} \\ 
    & \simeq \sbT_{[n]}(m) & \text{\cref{def:projective model structure}} \\
    & \cong \Diag^*(L \times_{F(n)} F(n)_{/m}) & \text{\cref{def:sbT}}
   \end{align*} 
   Thus we need to prove the map 
   \begin{equation} \label{eq:proof map}
    L \times_{G(n,l)} F(0) \to L \times_{F(n)} F(n)_{/m} \cong L \times_{G(n,l)} G(n,l) \times_{F(n)} F(n)_{/m} 
   \end{equation}
   is a diagonal equivalence. 
   \par 
   It suffices to prove that $F(0) \to G(n,l) \times_{F(n)} F(n)_{/m}$ is a contravariant equivalence over $G(n,l)$.
   Indeed, in that case, by \cref{The Pullback preserves covar equiv}, the map \ref{eq:proof map} is also a contravariant equivalence 
   (as $L \to G(n,l)$ is a left fibration) and hence a diagonal equivalence by \cref{The Diag is local of Covar}.
   \par 
   By direct computation $F(n)_{/m} \to F(n)$ is given by $\ordered{0,...,m}: F(m) \to F(n)$ and so we have a bijection 
    $$F(0) \to  G(n,l) \times_{F(n)} F(n)_{/m} \cong G(m,l).$$ 
   This map  is the homotopy pushout (in the Reedy model structure) of the diagram 
   \begin{center}
    \begin{tikzcd}[row sep=0.5in, column sep=0.5in]
     F(0) \times \Delta[l] \arrow[d,hookrightarrow] & 
     F(0) \times \partial \Delta[l] \arrow[d,hookrightarrow] \arrow[r] \arrow[l] & 
     F(0) \times \partial \Delta [l] \arrow[d,hookrightarrow] \\
     G(n) \times \Delta[l] & G(n) \times \partial \Delta[l] \arrow[r] \arrow[l] & F(n) \times \partial \Delta [l]  
    \end{tikzcd}
    .
   \end{center}
    Thus it suffices to prove the vertical maps are contravariant equivalences over $G(n,l)$ 
    (as the contravariant model structure is left proper by \cref{The Covariant Model Structure}). 
    \par 
    The contravariant model structure is simplicial (\cref{The Covariant Model Structure}) and so it suffices to prove that 
    $\ordered{0}: F(0) \to G(m)$ is a contravariant equivalence over $G(n,l)$. 
    The map is a composition of maps $g: G(i) \to G(i+1)$ and thus it suffices to show $g$ is a contravariant equivalence 
    over $G(n,l)$.
    By \cref{The Base change covar}, 
    we can reduce that to proving that  
    $g:G(i) \to G(i+1)$ is a contravariant equivalence over $G(i+1)$.
     \par 
    Finally, we have following pushout square:
    \begin{equation} \label{eq:induction G}
     \begin{tikzcd}[row sep=0.5in, column sep=0.5in]
       F(0) \arrow[r, "\ordered{0}", "\simeq"'] \arrow[d, "\ordered{m-1}"'] & F(1) \arrow[d, "\ordered{m-1,m}"] \\
      G(i) \arrow[r, "\simeq"'] & G(i+1)
     \end{tikzcd}
    \end{equation}
     The top horizontal map is a covariant equivalence by definition, which implies that the bottom horizontal map 
     is a contravariant equivalence over $G(i+1)$.  
  \end{proof}

  \begin{remone}
   This result is also proven by Lurie \cite[Remark 2.1.4.11]{lurie2009htt}, however, there it relies on translating the problem
   into the world of simplicial categories and then proving it there, which we managed to avoid.
   On the other side, it is also proven by Heuts and Moerdijk \cite[Proposition F]{heutsmoerdijk2015leftfibrationi} with simplicial sets, 
   using a very similar approach. 
  \end{remone}
  
  \begin{remone}
   Interestingly enough the result does not hold if we replace ``CSS equivalence" with covariant or contravariant equivalence.
   For that it suffices to look at the simple case of $F(0) \to F(1)$, as the covariant model structure over $F(0)$ is just 
   the diagonal model structure, which is certainly not equivalent to the covariant or contravariant model structure over $F(1)$.
  \end{remone}
  
  \begin{remone}
   There are maps which are not CSS equivalences, but still induce a Quillen equivalence of covariant model structures.
   For more details see \cite[Subsections 4.4.5 and 5.1.4]{lurie2009htt}.
  \end{remone}
  
   We can use the results of this subsection to give a more explicit description of covariant fibrant replacement.
      
   \begin{exone} \label{Ex General Left Fibrant Rep of Point}
   Let $X$ be a simplicial space and $ i: X \to \hat{X}$ be a chosen CSS fibrant replacement of $X$.
   Then for any point $x: F(0) \to X$ the covariant fibrant replacement is given by $F(0) \times_{\hat{X}} \hat{X}^{F(1)} \times_{\hat{X}} X$.
   Indeed, according to \cref{the:yoneda for Segal spaces}, $F(0) \times_{\hat{X}} \hat{X}^{F(1)} \to \hat{X}$ 
   is the covariant fibrant replacement of $x: F(0) \to \hat{X}$
   and then by \cref{The Covar invariant under CSS equiv} the covariant equivalence is preserved by pulling back along $i$.
  \end{exone}

  In \cref{prop:left fib over Segal is Segal fib} we proved that a left fibration over a complete Segal space is a complete Segal space fibration. 
  Using the invariance theorem, \cref{The Covar invariant under CSS equiv}, we can now generalize it to left fibrations over every simplicial space. 
  
  \begin{theone} \label{The Covariant local of CSS}
   Let $X$ be a simplicial space. Then the following adjunction
   \begin{center}
    \adjun{(\ss_{/X})^{CSS}}{(\ss_{/X})^{cov}}{id}{id}
   \end{center}
   is a Quillen adjunction.
   Here the left hand side has the induced CSS model structure (Definition \ref{prop:induced model structure})
   and the right hand side has the covariant model structure.
   \par 
   This implies that the covariant model structure over $X$ is a localization of the induced CSS model structure over $X$.
  \end{theone}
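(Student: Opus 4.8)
The plan is to invoke Corollary \ref{Cor Quillen adj for localizations}. Both the induced CSS model structure and the covariant model structure on $s\s_{/X}$ are localizations of the Reedy model structure, and the identity functor visibly preserves cofibrations (monomorphisms in all three structures) and Reedy fibrations, so $(id,id)$ will be a Quillen adjunction as soon as every covariant fibrant object is fibrant in the induced CSS model structure. By Theorem \ref{The Covariant Model Structure} this amounts to proving that \emph{every left fibration $L\to X$ is a fibration in the induced CSS model structure on $s\s_{/X}$}. Since that model structure is induced from $s\s^{CSS}$, such a map is a fibration exactly when it has the right lifting property against a generating set of trivial cofibrations of $s\s^{CSS}$; for this I would use the three families $\mathcal{R}$, $\mathcal{S}$, $\mathcal{C}$ occurring in the proof of Theorem \ref{The Covar invariant under CSS equiv} — the Reedy trivial cofibrations, together with the ``horn'' maps built from the Segal maps $G(n)\to F(n)$ and from the completeness map $F(0)\to E(1)$. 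Lifting against $\mathcal{R}$ is exactly being a Reedy fibration, which holds by definition of a left fibration, so the content lies in $\mathcal{S}$ and $\mathcal{C}$.

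For $\mathcal{S}$, cartesian closure of the Reedy model structure turns the lifting problem into the assertion that
\[
L_n \longrightarrow Map(G(n),L)\underset{Map(G(n),X)}{\times} X_n
\]
is a trivial Kan fibration. It is a Kan fibration because $L\to X$ is a Reedy fibration, so only the equivalence needs an argument, and here I would use the defining equivalences $L_k\xrightarrow{\ \simeq\ }L_0\times_{X_0}X_k$ of a left fibration: a rearrangement of iterated pullbacks, parallel to the proof of Lemma \ref{Lemma Left Fibrations over Segal Spaces} but using the left fibration property of $L$ rather than any Segal condition on $X$, identifies $Map(G(n),L)$ with $L_0\times_{X_0}Map(G(n),X)$ compatibly with the projections to $Map(G(n),X)$, so the target above becomes $L_0\times_{X_0}X_n\simeq L_n$ and the map is an equivalence. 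This step is precisely the assertion that every left fibration is a Segal fibration, proved directly.

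For $\mathcal{C}$, the same adjunction reduces the lifting problem to showing that
\[
Map(E(1),L)\longrightarrow L_0\underset{X_0}{\times}Map(E(1),X)
\]
is a trivial Kan fibration; again it is automatically a Kan fibration, and to obtain the equivalence it suffices to show that the square of simplicial spaces with vertices $L^{E(1)}$, $L$, $X^{E(1)}$, $X$ obtained by restricting along $0\colon F(0)\to E(1)$ is a homotopy pullback, i.e.\ that $L^{E(1)}\to L\times_X X^{E(1)}$ is a Reedy equivalence. By Lemma \ref{Lemma Exponents of Left fib by F is left fib} and Lemma \ref{Lemma Pullback preserves Left fibrations} both sides are left fibrations over $X^{E(1)}$, so by Lemma \ref{Lemma Condition for covar equiv between left fib} it is enough to compare the fibres over each point $e\colon E(1)\to X$ of $X^{E(1)}$: the fibre of $L^{E(1)}$ is the space of sections of $e^{*}L\to E(1)$, and the fibre of $L\times_X X^{E(1)}$ is the fibre of $e^{*}L\to E(1)$ over the vertex $0$, and these coincide precisely because $0\colon F(0)\to E(1)$ is a covariant equivalence over $E(1)$. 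For this last point I would argue that $0\colon F(0)\to E(1)$ is a CSS equivalence (Theorem \ref{The Complete Segal Space Model Structure}), so that by Theorem \ref{The Covar invariant under CSS equiv} the adjunction $(0_{!},0^{*})\colon (s\s_{/F(0)})^{cov}\rightleftarrows(s\s_{/E(1)})^{cov}$ is a Quillen equivalence; the standard criterion for a Quillen equivalence, applied to the cofibrant object $id_{F(0)}$ and the fibrant object $id_{E(1)}$ — for which $0^{*}(id_{E(1)})=id_{F(0)}$ — then yields that $0_{!}(id_{F(0)})=(0\colon F(0)\to E(1))$ is a covariant equivalence over $E(1)$.

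I expect the $\mathcal{C}$ step to be the main obstacle: correctly identifying the fibres of $L^{E(1)}\to X^{E(1)}$ with section spaces and reducing the needed equivalence to the ``initiality of the vertex $0$'' in $E(1)$, which forces one to borrow Theorem \ref{The Covar invariant under CSS equiv} from the previous subsection. A secondary point requiring care is checking that the rearrangement in the $\mathcal{S}$ step uses only the left fibration property of $L$, so that it applies for an arbitrary, not necessarily Segal, base $X$. One could alternatively first reduce to the case where $X$ is a complete Segal space, using Theorem \ref{The Covar invariant under CSS equiv} together with the behaviour of slice model structures under weak equivalences of the base, but arguing directly as above appears cleaner.
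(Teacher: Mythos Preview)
Your argument is correct but takes a genuinely different route from the paper. The paper's proof is a three-line application of Theorem \ref{The Covar invariant under CSS equiv} on the \emph{left} adjoint side: given a CSS trivial cofibration $i\colon Y\to Z$ over $X$, the Quillen equivalence $(i_!,i^*)$ of covariant model structures forces the derived counit $i_!i^*(id_Z)\to id_Z$ to be a covariant equivalence over $Z$; but $i_!i^*(id_Z)=Y$, so $i\colon Y\to Z$ is a covariant equivalence over $Z$, hence over $X$ by Theorem \ref{The Base change covar}. You instead work on the \emph{right} adjoint side, proving directly that every left fibration is a CSS fibration by checking lifting against the generating families $\mathcal{R},\mathcal{S},\mathcal{C}$. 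Your approach is longer but has the advantage that the $\mathcal{S}$ step gives an elementary, self-contained proof that left fibrations are Segal fibrations, independent of Theorem \ref{The Covar invariant under CSS equiv}; you only invoke that theorem for the $\mathcal{C}$ step, and only in the special instance $F(0)\to E(1)$. In the paper's logic, the statement ``left fibrations are CSS fibrations'' is Corollary \ref{Cor Left Fib is CSS Fib}, deduced \emph{after} the theorem; you have effectively reversed that order.

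One technical slip to repair: you invoke Corollary \ref{Cor Quillen adj for localizations}, which requires both model structures to be localizations of the Reedy model structure on $s\s_{/X}$. The induced CSS model structure of Definition \ref{Def Induced model structure} is \emph{not} such a localization when $X$ fails to be a complete Segal space (this is precisely the point of Lemma \ref{Lemma Induced Model and Relative Localized Model are the same over Fibrant}). The fix is immediate: appeal to Lemma \ref{Lemma For Quillen adj} instead. Fibrations between fibrant objects in the covariant model structure are Reedy fibrations between left fibrations over $X$, hence themselves left fibrations by Lemma \ref{Lemma Composition preserves Left fibrations}(2); once you have shown that every left fibration is a CSS fibration, the right adjoint therefore takes fibrations between fibrant objects to fibrations in the induced CSS model structure, and Lemma \ref{Lemma For Quillen adj} applies.
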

  \begin{proof}
   We want to prove that the left adjoint preserves cofibrations and trivial cofibrations. 
   Clearly it preserves cofibrations as they are just the monomorphisms. 
   Let $i: Y \to Z$ be a trivial CSS cofibration over $X$.
   Then, \cref{The Covar invariant under CSS equiv} gives us a Quillen equivalence
   \begin{center}
    \adjun{(\ss_{/Y})^{cov}}{(\ss_{/Z})^{cov}}{i_!}{i^*}.
   \end{center}
   Thus, in particular, the counit map $i_!i^*Z \to Z$ is a covariant equivalence over $Z$. 
   However, we have $i_!i^*(\id_Z) = i: Y \to Z$, which means $i$ is a covariant equivalence over $Z$. 
   Finally, by \cref{The Base change covar}, $i$ is also a covariant equivalence over $X$.
  \end{proof}

  \begin{remone}
    Note that we can use the same proofs with the contravariant model structure to show that the contravariant model structure
    is a localization of the CSS model structure as well.
  \end{remone}
  The result above has the following very important corollary.
  \begin{corone} \label{Cor Left Fib is CSS Fib}
   Every left (and right) fibration is a CSS fibration.
  \end{corone}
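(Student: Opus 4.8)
The plan is to deduce this immediately from Theorem~\ref{The Covariant local of CSS}, using only the general behaviour of right Quillen functors together with the description of the induced model structure on an over-category. Recall that in the induced CSS model structure on $s\s_{/X}$ (Definition~\ref{Def Induced model structure}) the fibrations are exactly the maps that are fibrations in the CSS model structure on $s\s$; consequently an object $Y \to X$ is fibrant there precisely when its structure map $Y \to X$ is a CSS fibration, since the map from $(Y \to X)$ to the terminal object $(X \xrightarrow{id} X)$ of $s\s_{/X}$ is the structure map itself.

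Next, Theorem~\ref{The Covariant local of CSS} provides a Quillen adjunction $id : (s\s_{/X})^{CSS} \rightleftarrows (s\s_{/X})^{cov} : id$ exhibiting the covariant model structure as a (left Bousfield) localization of the induced CSS model structure. The right adjoint $id : (s\s_{/X})^{cov} \to (s\s_{/X})^{CSS}$ is right Quillen, so it carries fibrant objects to fibrant objects. Now if $L \to X$ is a left fibration, then by Theorem~\ref{The Covariant Model Structure} it is a fibrant object of $(s\s_{/X})^{cov}$; applying the right Quillen functor shows that $L \to X$ is fibrant in $(s\s_{/X})^{CSS}$, which by the previous paragraph means exactly that $L \to X$ is a CSS fibration. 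For a right fibration $R \to X$ one argues identically, replacing the covariant model structure by the contravariant one and invoking the remark that the contravariant model structure is likewise a localization of the induced CSS model structure.

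There is no genuinely hard step here; the argument is essentially a one-line consequence of the localization statement. The only points requiring a moment's care are the standard identification of fibrant objects in $s\s_{/X}$ with fibrations over $X$, and the fact that we are using ``localization'' in the left Bousfield sense, so that passing to the localized model structure can only shrink the class of fibrant objects --- which is precisely what makes every left fibration (a fibrant object of the finer, covariant structure) also fibrant in the coarser CSS structure.
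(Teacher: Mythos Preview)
Your argument is correct and is exactly the intended one: the paper states this as an immediate corollary of Theorem~\ref{The Covariant local of CSS} without further proof, and your unpacking of why a left Bousfield localization forces the fibrant objects of the covariant structure to be fibrant in the induced CSS structure (hence CSS fibrations over $X$) is precisely the implicit reasoning. There is nothing to add.
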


  \begin{remone}
   This result generalizes \cite[Subsection 1.4]{debrito2018leftfibration}, which proved that left fibrations over Segal spaces are 
   CSS fibrations. 
   Lurie proves the same result over an arbitrary simplicial set \cite[Theorem 3.1.5.1]{lurie2009htt}, but relies on the 
   straightening construction.
  \end{remone}
  
  In \cref{the:pullback of right fib over CSS equiv} we proved that pulling back along right and left fibrations over a CSS preserves CSS equivalences. 
  Using the invariance theorem, \cref{The Covar invariant under CSS equiv}, we can generalize this to arbitrary right and left fibrations.
  
   \begin{theone} \label{The Pullback preserves CSS equiv}  
   Let $p:R \to X$ be a right or left fibration. Then the adjunction
    \begin{center}
     \adjun{(\ss_{/X})^{CSS}}{(\ss_{/R})^{CSS}}{p^*}{p_*}
    \end{center}
    is a Quillen adjunction. Here both sides have the induced CSS model structure 
    (\cref{prop:induced model structure}).
  \end{theone}
  \begin{proof}
   We will assume that $p$ is a right fibration, the case for left fibrations follows similarly. 
   
   In order to prove $(p^*,p_*)$ is a Quillen adjunction is suffices to prove $p^*$ preserves cofibrations and 
   trivial cofibrations. Evidently, $p^*$ preserves cofibrations as they are just inclusion. 
   Hence we are left with proving that for a given trivial CSS cofibration $Y \to Z$ over $X$, 
   the map $Y \times_X R \to Z \times_X R$ is a CSS equivalence over $R$, which, by \cref{prop:induced model structure},
   is equivalent to $Y \times_X R \to Z \times_X R$ being a CSS equivalence. 
   
   By \cref{The Covar invariant under CSS equiv} we have following diagram 
   \begin{center}
    \begin{tikzcd}[row sep=0.5in, column sep=0.5in]
     R \arrow[dr, "j" near end , "\simeq" description] 
     \arrow[ddr, "p"', twoheadrightarrow, near end, bend right =25] \arrow[drr, "i_R" near end, "\simeq" description, bend left =25] 
     & & \\
     & X \times_{\hat{X}} \hat{R} \arrow[r] \arrow[d, twoheadrightarrow] & \hat{R} \arrow[d, "\hat{p}", twoheadrightarrow] \\
     & X \arrow[r, "i_X" near start, "\simeq" description] & \hat{X}
    \end{tikzcd}

   \end{center}
   where $\hat{p}:\hat{R} \to \hat{X}$ is a CSS fibrant replacement of $p: R \to X$ (and a right fibration) 
   and the map $j: R \to X \times_{\hat{X}} \hat{R}$ is a Reedy equivalence. 
   
   We can use the Reedy equivalence $j$ to form following diagram: 
  \begin{center}
   \begin{tikzcd}[row sep=0.5in, column sep=0.5in]
    Y \underset{X}{\times} R \arrow[r] \arrow[d, "\simeq", "Y \underset{X}{\times} j"'] & 
    Z \underset{X}{\times} R \arrow[d, "\simeq"', "Z \underset{X}{\times} j"] \\
    Y \underset{X}{\times} (X \underset{\hat{X}}{\times} \hat{R}) \arrow[r] \arrow[d, "\cong"] & 
    Z \underset{X}{\times} (X \underset{\hat{X}}{\times} \hat{R}) \arrow[d, "\cong"'] \\
    Y \underset{\hat{X}}{\times} \hat{R} \arrow[r] & Z \underset{\hat{X}}{\times} \hat{R}
   \end{tikzcd}
  .
  \end{center}
  The vertical maps in the top square are Reedy equivalences as the Reedy model structure is right proper (\cref{Subsec Reedy Model Structure}).
  The vertical map in the bottom square are isomorphisms by definition of pullbacks. 
  This implies that in order to show the top map is an equivalence it suffices to show the bottom map is an equivalence. 
  However, $\hat{p}: \hat{R} \to \hat{X}$ is a right fibration over a complete Segal space 
  and so the result follows from \cref{the:pullback of right fib over CSS equiv}.
 \end{proof}
 
 The exact same proof can be used to prove the following theorem.
 
  \begin{theone} \label{The Pullback preserves Seg equiv}
   Let $p:R \to X$ be a right or left fibration. Then the adjunction
    \begin{center}
     \adjun{(\ss_{/X})^{Seg}}{(\ss_{/R})^{Seg}}{p^*}{p_*}
    \end{center}
    is a Quillen adjunction. Here both sides have the induced Segal space model structure 
    (\ref{prop:induced model structure}).
  \end{theone}

 \begin{remone}
  This same result is stated in \cite[Remark 11.10]{joyal2008theory} in the language of quasi-categories, however without a proof.
 \end{remone}
 
 The theorem has following helpful corollary.
 
 \begin{corone}
  Let $X \to Y$ be a CSS equivalence and $F \to Y$ either a right or left fibration over $Y$.
  Then the map $X \times_Y F \to F$ is also a CSS equivalence.
 \end{corone}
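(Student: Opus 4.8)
The plan is to read this off directly from Theorem \ref{The Pullback preserves CSS equiv}. Suppose first that $p : F \to Y$ is a right fibration. View the CSS equivalence $g : X \to Y$ as a morphism in $s\s_{/Y}$ from the object $(X \xrightarrow{g} Y)$ to the terminal object $(Y \xrightarrow{\mathrm{id}} Y)$. In the induced CSS model structure on $s\s_{/Y}$ (Definition \ref{Def Induced model structure}) the weak equivalences are created by the forgetful functor to $s\s$, so $g$ is a weak equivalence there; and since the cofibrations are the monomorphisms, every object of $s\s_{/Y}$ is cofibrant, in particular the source and target of $g$.

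Next I would apply the left adjoint $p_! p^*$ of the Quillen adjunction provided by Theorem \ref{The Pullback preserves CSS equiv}. A left Quillen functor carries weak equivalences between cofibrant objects to weak equivalences (Ken Brown's lemma), so $p_! p^*(g)$ is a CSS equivalence. It then remains only to identify this map: one has $p^*(X \xrightarrow{g} Y) = (X \times_Y F \xrightarrow{\mathrm{pr}} F)$ in $s\s_{/F}$ and $p^*(Y \xrightarrow{\mathrm{id}} Y) = (F \xrightarrow{\mathrm{id}} F)$, while $p_!$ merely postcomposes structure maps with $p$; hence $p_! p^*(g)$ is precisely the projection $X \times_Y F \to F$, regarded as a map over $Y$. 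It is therefore a CSS equivalence, which is exactly the assertion, since being a CSS equivalence over $Y$ is the same as being one in $s\s$.

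For the case where $F \to Y$ is a left fibration, I would invoke the duality $(-)^{\mathrm{op}}$ on $s\s$: it is a Quillen self-equivalence of the CSS model structure, it interchanges left and right fibrations (the lemma relating the two), and it commutes with pullbacks; so applying the right-fibration case to $F^{\mathrm{op}} \to Y^{\mathrm{op}}$ and $X^{\mathrm{op}} \to Y^{\mathrm{op}}$ and dualizing back gives the claim. I do not anticipate any serious obstacle here: the only mildly fussy points are verifying that Ken Brown's lemma applies (it does, since all objects are cofibrant) and the bookkeeping that identifies $p_! p^*(g)$ with the base-change projection $X \times_Y F \to F$.
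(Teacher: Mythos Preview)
Your proposal is correct and follows essentially the same route as the paper: the corollary is stated there as an immediate consequence of Theorem \ref{The Pullback preserves CSS equiv} (together with the remark that the CSS model structure is symmetric under $(-)^{op}$, so the theorem applies equally to left fibrations). You have simply made the implicit deduction explicit by invoking Ken Brown's lemma and identifying $p_!p^*(g)$ with the projection $X\times_Y F\to F$, which is exactly the intended argument.
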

 
 This result is indeed helpful, as the CSS model structure is {\it not} right proper i.e. generally weak equivalences are not preserved
 by pullbacks.
 We can easily see this in the following example.
 
 \begin{exone}
  The map $G(2) \to F(2)$ is a Segal equivalence. 
  Let $F(1) \to F(2)$ be the unique map that takes $0$ to $0$ and $1$ to $2$.
  Note that this map is a CSS fibration but neither a left fibration nor a right fibration.
  Now the pullback
  $$F(1) \underset{F(2)}{\times} G(2) \to F(1)$$
  is clearly not a Segal equivalence as the left hand side is just $F(0) \coprod F(0)$.
 \end{exone}
 
 \subsection{Colimits, Cofinality and Quillen's Theorem A} \label{Subsec Colimits Cofinality and Quillens Theorem A}
  One intricate subject in the theory of $(\infty,1)$-categories is the study of limits and colimits. 
  Accordingly, there now many sources dedicated to the study of colimits of quasi-categories, such as \cite{joyal2008notes}, \cite{lurie2009htt}.
  Thus if we are interested in understanding colimits in a complete Segal space, we can 
  translate those results from quasi-categories to complete Segal spaces using the work of Joyal and Tierney \cite{joyaltierney2007qcatvssegal}, 
  or specialize the model independent approach to colimits via $\infty$-cosmoi 
  to the case of complete Segal spaces \cite{riehlverity2017inftycosmos}.
  
  In \cref{subsec:why simplicial spaces?} we discussed how  we want to understand whether results about $(\infty,1)$-categories, 
  such as various results about their colimits, still hold if we drop the completeness condition. 
  This cannot be directly translated from the corresponding results about quasi-categories or 
  $\infty$-cosmoi and needs to be proven directly. 
  
  In this final section we will apply our knowledge about left fibrations of Segal spaces to prove several classical results about 
  colimits in the context of Segal spaces. In particular, we prove that a cocone out of a diagram corresponds to a map out of its colimits
  (\cref{The Maps out of cone point give us map out of colimit}), 
  final maps give us equivalent colimits (\cref{The Cofinal maps give same cocones}) and Quillen's theorem A 
  for simplicial spaces (\cref{The Quillen Theorem A}) and in particular Segal spaces (\cref{cor:Quillen Thm A Segal}).

  \begin{remone}
   In this section we focus on using left fibrations to study colimits. 
   We can analogously study limits via right fibrations. 
  \end{remone}
  
  \begin{defone} \label{Def Cocones under diagram}
  Let $X$ be a Segal space and $p:K \to X$ be a map of simplicial spaces. We define the Segal space of {\it cocones under $K$},
  denoted by $X_{p/}$, as the pullback
  \begin{center}
   \begin{tikzcd}[row sep=0.5in, column sep=0.5in]
    X_{p/} \arrow[r] \arrow[d, "\pi"] & (X^K)^{F(1)} \arrow[d, "(s \comma t)"] \\
    X \cong F(0) \times X \arrow[r, "\{ p \} \times \Delta"] & X^K \times X^K
   \end{tikzcd}
  \end{center}
  where $\Delta: X \to X^K$ is the map induced by the final map $K \to F(0)$.
 \end{defone}
 
 \begin{remone}
  The definition here differs from the definition usually given for quasi-categories \cite[Proposition 1.2.9.2]{lurie2009htt}.
  However, the two definitions are in fact equivalent. Indeed, this follows from the fact that the covariant model structures of 
  quasi-categories and complete Segal spaces are equivalent, as proven in \cref{Sec Comparison with Quasi-Categories}.
 \end{remone}

   \begin{exone}
   If $X$ is a Segal space and $K = F(0)$ then $p$ is determined by a choice of point $x$ in $X$ and we have $X_{p/} = X_{x/}$, 
   the Segal space of objects under $x$, as defined in \cref{def:under Segal space}.  
  \end{exone}
  
 \begin{lemone} \label{Lemma Cocones gives us left fibrations}
  Let $X$ be a Segal space and $p: K \to X$ be a map of simplicial spaces. 
  The projection map 
  $$\pi : X_{p/} \to X$$
  is a left fibration.
 \end{lemone}
 \begin{proof}
  In the following pullback diagram
  \begin{center}
   \begin{tikzcd}[row sep=0.5in, column sep=0.5in]
    X_{p/} \arrow[r] \arrow[d, twoheadrightarrow] \arrow[dr, phantom, "\ulcorner", very near start]
    & F(0) \underset{X^K}{\times} (X^K)^{F(1)} = (X^K)_{p/}\arrow[d, twoheadrightarrow] \\
    X \arrow[r]
    & X^K
  \end{tikzcd}
  \end{center}
 the right vertical map is a left fibration, by \cref{the:under CSS left fibration}, and so the left vertical map must be a left fibration as well.
 as left fibrations are closed under pullbacks (\cref{Lemma Pullback preserves Left fibrations}).
 \end{proof}
 
 \begin{defone} \label{Def Of limits}
   Let $X$ be a Segal space and $p:K \to X$ a map of simplicial spaces. 
   We say $p$ has a colimit if the Segal space $X_{p/}$ has an initial object (\cref{def:initial object}).
  \end{defone}
  
  \begin{remone}
   This approach to colimits has already been studied for quasi-categories \cite[Definition 1.2.13.4]{lurie2009htt}.
  \end{remone}

  The next lemma can help us better understand the definition of a colimit.
  
  \begin{lemone} \label{lemma:limit conditions}
   Let $X$ be a Segal space and $p: K \to X$ be a map of simplicial spaces. Then the following are equivalent:
   \begin{enumerate}
    \item $p$ has a colimit.
    \item There is a covariant equivalence $\{ \sigma \}: F(0) \to X_{p/}$ over $X$.
    \item There is a Reedy equivalence 
    $$X_{v/} \to X_{p/}$$
    where $v$ is an object in $X$.
   \end{enumerate}
  \end{lemone}
  
  \begin{proof}
   
   $(1 \Rightarrow 2)$
   If $p$ is a colimit, then $X_{p/}$ has an initial object $\sigma$ which, by \cref{def:initial object}, means 
   $\{ \sigma \} : F(0) \to X_{p/}$ is a covariant equivalence over $X$.
   
   \medskip
   
   $(2 \Rightarrow 3)$
   Assume we have a covariant equivalence $\{ \sigma \} : F(0) \to X_{p/}$ over $X$ and let $\{ v \} = \pi \circ \{ \sigma \}: F(0) \to X$.
   Then the commutative square
   \begin{center}
    \liftsq{F(0)}{X_{p/}}{X_{v/}}{X}{\simeq}{\simeq}{}{}
   \end{center}
   has a lift. Indeed, $X_{p/} \to X$ is a left fibration (\cref{Lemma Cocones gives us left fibrations}) 
   and so a fibration in the covariant model structure and 
   $F(0) \to X_{v/}$ is a covariant equivalence over $X$ (\cref{the:yoneda for Segal spaces}).
   As the top and left hand maps are covariant equivalences over $X$, the map $X_{v/} \to X_{p/}$ is also a covariant equivalence over $X$.
   Finally, as both $X_{v/}$ and $X_{p/}$ are left fibrations over $X$, the covariant equivalence is in fact a Reedy equivalence 
   (\cref{The Covariant Model Structure}).
   
   \medskip 
   
   $(3 \Rightarrow 1)$ By \cref{the:rep equiv initial obj}, 
   $X_{v/}$ has an initial object and so by the Reedy equivalence $X_{p/}$ also has an initial object. 
  \end{proof}
  
  We call the object $\sigma$ in $X_{p/}$ the {\it universal cocone} and, by abuse of language, the object $v$ in $X$ the {\it colimit}.
  
   For the remainder of this section we want to use our knowledge of left fibrations to study colimits of Segal spaces. 
   First, we prove the Segal space analogue to the fact that maps out of a colimit are equivalent to maps of cocones.

  \begin{theone} \label{The Maps out of cone point give us map out of colimit}
   Let $X$ be a Segal space and $p: K \to X$ be a map of simplicial spaces and assume it has universal
   cocone $\sigma: F(0) \to X_{p/}$ with colimit $v$.
   Then for any object $y$ in $X$ we have a natural equivalence 
   $$\comp : map_{X}(v,y) \xrightarrow{ \ \ \simeq \ \ } map_{X^K}(p, \{ y \}).$$
  \end{theone}
  
  \begin{proof}
    By \cref{lemma:limit conditions}, we have a Reedy equivalence $X_{v/} \to X_{p/}$ over $X$.
    Fix an object $y: F(0) \to X$, which gives us a point $y: \Delta[0] \to X_0$. Then, using \cref{def:mapping space Segal space},
    we get a Kan equivalence 
    $$\comp: \map_X(v,y) = \Map_{/X}(F(0),X_{v/}) \to \Map_{/X}(F(0),X_{p/}) = \map_{X^K}(p,\{ y \} )$$
    and hence we are done.
  \end{proof}

   We now move on to study computational aspects of colimits in Segal spaces. 
   In particular, we introduce final maps and prove they give us equivalent colimits. 
  
   \begin{defone} \label{Def Cofinal map}
    A map $f: X \to Y$ is called {\it final} if $f$ is a contravariant equivalence over $Y$.
    Similarly, $f: X \to Y$ is called {\it initial} if $f$ is a covariant equivalence over $Y$.
   \end{defone} 
   
   \begin{remone}
    The notion of final maps of quasi-categories was first studied in \cite[8.11]{joyal2008notes}.
    They also have been studied by Lurie \cite[Definition 4.1.1.1]{lurie2009htt} where they are called cofinal.
   \end{remone}

   Initial and final maps are a generalization of initial and final objects.
   
  \begin{exone}
   A map $\{x\}: F(0) \to X$ is initial in the sense of \cref{Def Cofinal map} if and only if 
   the object $x$ is initial in the sense of \cref{def:initial object}.
  \end{exone}

  \begin{lemone} \label{Lemma Condition for final maps}
   Let $f: X \to Y$ be a map of simplicial spaces. The following are equivalent:
   \begin{enumerate}
    \item $f$ is a final map.
    \item For any map $g:Y \to Z$ the map $f$ is a contravariant equivalence over $Z$.
    \item For any right fibration $R \to Y$ the induced map 
    $$\Map_{/Y}(Y,R) \to \Map_{/Y}(X,R)$$
    is a Kan equivalence.
   \end{enumerate}
  \end{lemone}
  \begin{proof}
     {\it (1 $\Rightarrow$ 2)}
    As $f: X \to Y$ is a covariant equivalence over $Y$, $g_!(X \to Y)$ is a covariant equivalence over $Z$ (\cref{The Base change covar}).
    
    \medskip 
    
    {\it (2 $\Rightarrow$ 1)}
    This is just a special case where $g = \id_Y$.
    
    \medskip 
    
    {\it (1 $\Leftrightarrow$ 3)}
    Follows from the definition of contravariant equivalence (\cref{The Covariant Model Structure}).
  \end{proof}
  
  \begin{remone} \label{Rem Cofinal maps are terrible}
   Although final maps are defined as certain contravariant equivalences, they do not always behave similar to weak equivalences
   in the model categorical sense.
   In particular, they do not satisfy the $2$-out-of-$3$ property. For example, in the chain
   $$F(0) \xrightarrow{ \ordered{0} } F(1) \xrightarrow{ \ \ \ordered{0,0} \ \ } F(0)$$
   the map $\ordered{0,0}$ and the composition $\ordered{0}$ are final, but $\ordered{0}: F(0) \to F(1)$ is not. 
  \end{remone}
  
  \begin{corone} \label{Cor CSS equiv is final}
   If $f:X \to Y$ is a CSS equivalence then it is final.
  \end{corone}
  \begin{proof}
   This follows directly from \cref{The Covariant local of CSS}.
  \end{proof}
  
  Before we move on let us note that this gives us one exception to \cref{Rem Cofinal maps are terrible}.
  
  \begin{lemone}
   Let $X \xrightarrow{ \ f \ } Y \xrightarrow{ \ g \ } Z$ be a chain of maps such that $g$ is a CSS equivalence.
   Then $f$ is a final map if and only if $gf$ is a final map.
  \end{lemone}
  \begin{proof}
   First note that $g$ is a final map. So, if $f$ is final then simple composition implies
   that $gf$ is also a final map.
   
   On the other side, let us assume $gf= g_!(f)$ is a final map. 
   Then, by \cref{The Covar invariant under CSS equiv}, the following adjunction is a Quillen equivalence:
   \begin{center}
    \adjun{(\ss_{/Y})^{contra}}{(\ss_{/Z})^{contra}}{g_!}{g^*}
   \end{center}
   which implies that $f: X \to Y$ is a contravariant equivalence over $Y$ if and only if $g_!(f): X \to Y$ is a contravariant equivalence over $Z$, 
   and so $f$ is a final map as well. 
  \end{proof}
  
  Having discussed final maps we can now show how it allows us to simplify colimit diagrams.
  
 \begin{theone} \label{The Cofinal maps give same cocones}
  Let $g: A \to B$ be a final map and $X$ be a Segal space. Then for any map $f: B \to X$ the induced map 
  $$X_{f/} \to X_{fg/}$$
  is a Reedy equivalence. 
 \end{theone}
 
 \begin{proof}
  Fix an object $x$ in the Segal space $X$.
  Using adjunctions between products and exponentials, we have following bijections
  \begin{equation} \label{eq:bijections}
   \Map(F(1), X^B) \cong \Map(F(1) \times B, X) \cong \Map(B,X^{F(1)})
  \end{equation}
   Let $\Map^{res}(F(1) \times B,X)$ be the full subspace of $\Map(F(1) \times B,X)$ consisting of maps $H: F(1) \times B \to X$ such that 
   $H|_{\{0\}} = f: B \to X$ and $H|_{\{1\}} = \{x\}: B \to X$.  Then restricting the two bijection in \ref{eq:bijections} gives us 
   the bijections

   $$\Map_{/X}(F(0), X_{f/}) \cong \Map^{res}(F(1) \times B, X) \cong \Map_{/X}(B,X_{/x}).$$
 
  This action is functorial and hence the map $g: A \to B$ gives us a commutative diagram:
  \begin{center}
   \begin{tikzcd}[row sep=0.5in, column sep=0.5in]
    \Map_{/X}(F(0), X_{f/}) \arrow[r, "g_*"] \arrow[d, "\cong"] & \Map_{/X}(F(0), X_{fg/}) \arrow[d, "\cong"] \\
    \Map_{/X}(B,X_{/x}) \arrow[r, "f^*", "\simeq"'] &  \Map_{/X}(A,X_{/x})
   \end{tikzcd}
   .
  \end{center}
   By the explanation above the vertical maps are bijections. Moreover, $X_{/x} \to X$ is a right fibration (\cref{the:under CSS left fibration}) and 
   so, by \cref{Def Cofinal map}, the bottom map is a Kan equivalence. Hence the top map is a Kan equivalence for every map $x: F(0) \to X$. 
   As $g_*: X_{f/} \to X_{fg/}$ is a map of left fibrations over $X$, this implies that it is a Reedy equivalence (\cref{The Condition for covar equiv between left fib}).
 \end{proof}

 \begin{corone}
  Let $X$ be a Segal space and $g: A \to B$ be a final map of simplicial spaces.
  Then a map $f: B \to X$ has a colimit if and only if $gf: A \to X$ has a colimit 
  and in that case they are equivalent objects in $X$. 
 \end{corone}

 We end this section by giving a useful criterion for classifying final maps of simplicial spaces, motivated by 
 Quillen's Theorem A \cite{quillen1972higherktheories}.
 
  \begin{theone} \label{The Quillen Theorem A}
   Let $f: X \to Y$ be a map of simplicial spaces and $\{L_y \to Y\}_{\{y\}:F(0) \to Y}$ a collection of covariant fibrant replacements of 
   $\{y\}:F(0) \to Y$. The following are equivalent:
   \begin{enumerate}
    \item $f$ is a final map.
    \item For any $y:F(0) \to Y$, the simplicial space $L_y \times_Y X$ is diagonally contractible.
   \end{enumerate}
  \end{theone}
  \begin{proof}
    By \cref{the:covar equiv over simp space}, $f:X \to Y$ is a contravariant equivalence over $Y$ if and only if 
    $$L_y \times_Y f: L_y \times_Y X \to L_y \times_Y Y \cong L_y$$ 
    is a diagonal equivalence. 
    By assumption $F(0) \to L_y$ is a covariant equivalence over $Y$ and thus, by \cref{The Diag is local of Covar}, a diagonal equivalence.
    Hence, $L_y \times_Y f$ is a diagonal equivalence if and only if $L_y \times_Y X$ is diagonally contractible.   
  \end{proof}
   
   In the case of Segal spaces we can simplify the statement.
   
   \begin{corone} \label{cor:Quillen Thm A Segal}
    Let $Y$ be a Segal spaces and $f: X \to Y$ be a map of simplicial spaces. Then $f$ is final if and only if for every object $y$ in $Y$ 
    the simplicial space $Y_{y/} \times_Y X$ is diagonally contractible. 
   \end{corone}

  \begin{remone}
   This result was proven for quasi-categories in \cite[Theorem 4.1.3.1]{lurie2009htt} where it is attributed to Joyal.
   Thus, \cref{cor:Quillen Thm A Segal} generalizes Quillen's theorem $A$ to Segal spaces.
  \end{remone}
  
\appendix

\section{Some Facts about Model Categories} \label{Sec Some Facts about Model Categories}

 We primarily used the theory of model categories to tackle issues of higher category theory.
 In this section we will not introduce model categories as they are already several excellent sources. 
 For instance, we refer the reader to \cite{dwyersspalinski1995modelcat} for a short introduction to this subject and to 
 \cite{hovey1999modelcategories}, \cite{hirschhorn2003modelcategories} for a more detailed discussion.
 Here we will only state some technical lemmas we have used throughout. 
 \par 
 First, we make ample use of following very classical result about trivial Kan fibrations.
 
 \begin{lemone} \label{Lemma Triv Kan fib trivial fibers}
  Let $p: S \to T$ be a Kan fibration in $\s$. Then $p$ is a trivial Kan fibration if and only if
  each fiber of $p$ is contractible. 
 \end{lemone}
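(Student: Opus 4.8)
The plan is to prove the two implications by the usual interaction between Kan fibrations and homotopy groups, together with the stability of trivial Kan fibrations under base change. Throughout, ``fiber'' means the fiber $p^{-1}(t) = S\times_T \Delta[0]$ over a vertex $t\in T_0$; note that the hypothesis ``each fiber is contractible'' already presupposes this reading, since a fiber over a vertex of a Kan fibration is itself a Kan complex (base change of a Kan fibration along $\Delta[0]\to T$).

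For the forward direction, suppose $p$ is a trivial Kan fibration. Trivial Kan fibrations are stable under pullback, so for every vertex $t\in T_0$ the map $p^{-1}(t)\to \Delta[0]$ obtained by base change along $t\colon \Delta[0]\to T$ is again a trivial Kan fibration. But a trivial Kan fibration onto $\Delta[0]$ is exactly a simplicial set whose unique map to the point is both a Kan fibration and a weak equivalence, i.e.\ a contractible Kan complex. Hence every fiber of $p$ is contractible; this is the easy half.

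For the converse, assume that $p^{-1}(t)$ is contractible for every $t\in T_0$. Since $p$ is already a Kan fibration, it suffices to show $p$ is a weak equivalence. I would run the long exact sequence of homotopy groups attached to the Kan fibration $p^{-1}(t)\to S\to T$, based at an arbitrary vertex $s\in S_0$ with $t=p(s)$; equivalently, pass to geometric realizations, where $|p|$ is a Serre fibration by Quillen's theorem and $|-|$ preserves the relevant pullbacks, so the fiber of $|p|$ over $|t|$ is $|p^{-1}(t)|$, which is contractible. Contractibility of the fibers then makes the long exact sequence give that $p$ induces an isomorphism on $\pi_n$ for every $n\geq 1$ and every basepoint. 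It remains to handle $\pi_0$: each fiber is nonempty, so $p$ is surjective on vertices, hence surjective on $\pi_0$; and if $s,s'\in S_0$ have $p(s)$ and $p(s')$ in the same component of $T$, lifting a path from $p(s)$ to $p(s')$ along the Kan fibration $p$ with initial vertex $s$ produces a vertex of $p^{-1}(p(s'))$ joined to $s$, which is joined to $s'$ inside the connected fiber $p^{-1}(p(s'))$, so $p$ is also injective on $\pi_0$. Thus $p$ is a weak equivalence, and being a Kan fibration it is a trivial Kan fibration.

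The only genuinely delicate point is the $\pi_0$ bookkeeping together with having the homotopy long exact sequence available without assuming $T$ fibrant. This I would resolve either by working with geometric realizations as above, or by first choosing an anodyne extension $T\hookrightarrow \tilde T$ with $\tilde T$ a Kan complex, factoring the composite $S\to \tilde T$ as an anodyne map followed by a Kan fibration $\tilde S\to \tilde T$, and using right properness to identify the fibers of $p$, up to weak equivalence, with those of the Kan fibration $\tilde S\times_{\tilde T}T\to T$ between (objects weakly equivalent to) Kan complexes — thereby reducing to the classical fibration long exact sequence for a Kan fibration of Kan complexes.
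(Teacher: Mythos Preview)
The paper does not actually prove this lemma: it is stated without proof in the appendix as a standard fact about simplicial sets, and is then immediately used to deduce Corollary~\ref{Cor Fiberwise equiv for Kan fib}. So there is no ``paper's own proof'' to compare against.

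Your argument is correct and is one of the standard proofs of this classical fact. The forward direction is exactly right. For the converse, your use of the long exact sequence together with the careful $\pi_0$ bookkeeping is fine, and you are right to flag the issue that $T$ need not be fibrant; either of your two workarounds (geometric realization, or replacing by a Kan fibration between Kan complexes) is a legitimate fix. An alternative, slightly more elementary route for the converse is to verify the right lifting property against $\partial\Delta[n]\to\Delta[n]$ directly: given a square, first lift the horn $\Lambda^n_0\to\Delta[n]$ using that $p$ is a Kan fibration, then observe that the discrepancy between the resulting $n$-simplex and the original boundary data lives in a single fiber, where contractibility lets you correct it. But your approach is perfectly adequate for what the paper needs.
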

 
 For a readable proof of this statement see \cite[Section 38]{rezk2017qcats}.
 This lemma has the following important corollary.
 
 \begin{corone} \label{Cor Fiberwise equiv for Kan fib}
  Let $p:S \to K$ and $q:T \to K$ be two Kan fibrations. A map $f: S \to T$ over $K$ is a Kan equivalence 
  if and only if for each point $\{ k \}: \Delta[0] \to K$ the map between fibers 
  $$S \underset{K}{\times} \{ k \} \to T \underset{K}{\times} \{ k \}$$
  is a Kan equivalence.
 \end{corone}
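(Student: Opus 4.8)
The plan is to deduce both implications from general model-categorical principles together with Lemma~\ref{Lemma Triv Kan fib trivial fibers}. The key observation is that, since $p$ and $q$ are Kan fibrations, $S$ and $T$ are \emph{fibrant} objects of the slice category $\s_{/K}$ with its slice model structure, in which (trivial) fibrations, (trivial) cofibrations and weak equivalences are all detected on the underlying simplicial sets. For a point $k:\Delta[0]\to K$, pullback along $k$ defines a functor $k^*:\s_{/K}\to\s$ whose left adjoint $k_!$ (post-composition with $k$) visibly preserves cofibrations and trivial cofibrations, since it does not change underlying simplicial sets; hence $k^*$ is a right Quillen functor, and $k^*(X\to K)$ is exactly the fiber $X\times_K k$.

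For the forward direction, suppose $f:S\to T$ is a Kan equivalence. Then $f$ is a weak equivalence of $\s_{/K}$ between fibrant objects, so by Ken Brown's lemma the right Quillen functor $k^*$ sends it to a weak equivalence; this is precisely the assertion that $S\times_K k\to T\times_K k$ is a Kan equivalence, for every $k$.

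For the converse I would factor $f$ in $\s_{/K}$ as $S\xrightarrow{\ i\ }W\xrightarrow{\ j\ }T$ with $i$ a trivial cofibration and $j$ a fibration; then $W\to K$ is again a Kan fibration, being the composite $W\xrightarrow{j}T\xrightarrow{q}K$. Applying the forward direction to the weak equivalence $i$ shows each $S\times_K k\to W\times_K k$ is a Kan equivalence, so by the hypothesis and two-out-of-three each $W\times_K k\to T\times_K k$ is a Kan equivalence. But that last map is the pullback of the Kan fibration $j$ along $T\times_K k\to T$, hence a Kan fibration, and therefore a trivial Kan fibration; by Lemma~\ref{Lemma Triv Kan fib trivial fibers} all of its fibers are contractible. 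Since $W\to K$ factors through $T$, the fiber of $W\times_K k\to T\times_K k$ over a point $t$ lying over $k$ is literally the fiber of $j$ over $t$; as every point of $T$ arises this way, every fiber of the Kan fibration $j$ is contractible, so Lemma~\ref{Lemma Triv Kan fib trivial fibers} shows $j$ is a trivial Kan fibration, in particular a Kan equivalence. Then $f=j\circ i$ is a Kan equivalence by two-out-of-three.

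The routine parts are the identification of slice (trivial) fibrations and cofibrations with the underlying ones and the adjunction $k_!\dashv k^*$. The only place requiring care is the bookkeeping of fibers in the converse: checking that the homotopy fibers of $f$, computed through the fibrant replacement $j$ over $K$, coincide with the point-set fibers of $j$, and that the hypothesis controls these over \emph{every} point of $T$ rather than one per component. An alternative, avoiding the slice model structure, is to prove the forward direction via right properness of the Kan--Quillen model structure; I would nonetheless prefer the right-Quillen-functor formulation, as it is cleaner and self-contained.
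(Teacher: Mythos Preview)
Your proof is correct. The paper states this result as an immediate corollary of Lemma~\ref{Lemma Triv Kan fib trivial fibers} without giving a proof, so there is no detailed argument to compare against; your use of the factorization in $\s_{/K}$ together with Ken Brown's lemma for the forward direction and the fiber bookkeeping for the converse is a standard and clean way to carry out the deduction the paper has in mind.
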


 Next we have two important results that help us study Quillen adjunctions.
 
 \begin{lemone} \label{Lemma For Quillen adj}
  (\cite[Proposition 7.15]{joyaltierney2007qcatvssegal}, \cite[Proposition 8.5.4]{hirschhorn2003modelcategories}) 
  Let $\mathcal{M}$ and $\mathcal{N}$ be two model categories and 
  \begin{center}
   \adjun{\mathcal{M} }{\mathcal{N}}{F}{G}
  \end{center}
  be an adjunction of model categories, then the following are equivalent:
  \begin{enumerate}
   \item $(F,G)$ is a Quillen adjunction.
   \item $F$ takes cofibrations to cofibrations and $G$ takes fibrations between
   fibrant objects to fibrations.
   \item $G$ preserves trivial fibrations and takes fibrations between fibrant objects to fibrations. 
  \end{enumerate}
 \end{lemone}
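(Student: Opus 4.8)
The plan is to prove the two implications separately; the first is routine and the second carries all the content.

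\emph{$(1)\Rightarrow(2)$.} By definition of a Quillen adjunction $F$ sends cofibrations to cofibrations and trivial cofibrations to trivial cofibrations. I would then invoke the standard adjointness of lifting problems: $F$ sends a map $u$ to a map with the left lifting property against $p$ if and only if $u$ has the left lifting property against $Gp$. Running this with $u$ over the (trivial) cofibrations of $\mathcal M$ and $p$ over the fibrations of $\mathcal N$ shows that $G$ sends fibrations to fibrations and trivial fibrations to trivial fibrations; in particular $G$ sends fibrations between fibrant objects to fibrations, and $F$ sends cofibrations to cofibrations, which is $(2)$.

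\emph{$(2)\Rightarrow(1)$.} Since $F$ preserves cofibrations, the same adjointness argument already gives that $G$ preserves trivial fibrations, so $(F,G)$ is a Quillen adjunction as soon as $F$ preserves trivial cofibrations. I would first reduce this to the assertion that \emph{$G$ preserves all fibrations}: given a trivial cofibration $u$ of $\mathcal M$, a fibration $q$ of $\mathcal N$, and a commuting square from $Fu$ to $q$, transpose it across the adjunction to a commuting square from $u$ to $Gq$; a diagonal filler for one is the transpose of a diagonal filler for the other. Hence ``$Fu$ has the left lifting property against every fibration of $\mathcal N$ for every trivial cofibration $u$'' — which, $Fu$ being already a cofibration, is exactly ``$F$ preserves trivial cofibrations'' — is equivalent to ``$Gq$ has the right lifting property against every trivial cofibration for every fibration $q$'', i.e. to ``$G$ preserves fibrations''.

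It then remains to upgrade the hypothesis ``$G$ preserves fibrations between fibrant objects'' (together with ``$G$ preserves trivial fibrations'') to ``$G$ preserves all fibrations''. Given a fibration $p\colon X\to Y$ in $\mathcal N$, I would choose a fibrant replacement, i.e. a trivial cofibration $i\colon Y\rightarrowtail\hat Y$ with $\hat Y$ fibrant, and factor the composite $X\xrightarrow{p}Y\xrightarrow{i}\hat Y$ as $X\xrightarrow{j}\hat X\xrightarrow{\hat p}\hat Y$ with $j$ a trivial cofibration and $\hat p$ a fibration; then $\hat X$ is fibrant, so $\hat p$ is a fibration between fibrant objects and $G\hat p$ is a fibration in $\mathcal M$. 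Forming the pullback $P:=Y\times_{\hat Y}\hat X$ and using that the right adjoint $G$ preserves pullbacks, one gets $GP\cong GY\times_{G\hat Y}G\hat X$, so $GP\to GY$, a pullback of $G\hat p$, is a fibration. The goal is then to exhibit $Gp\colon GX\to GY$ as a retract of $GP\to GY$ over $GY$; the retraction data should come from a diagonal filler obtained by playing the trivial cofibration $j$ (or $i$) off against the fibration $\hat p$, and such a filler is preserved by $G$, so the retract survives applying $G$. A retract of a fibration is a fibration, which closes the argument.

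The hard part will be this last retract: since we do not assume $\mathcal M$ or $\mathcal N$ right proper, the square with corners $X,\hat X,Y,\hat Y$ is not a homotopy pullback for free, the comparison map $X\to P$ need not be a weak equivalence, and one cannot simply pull back and invoke properness. The retract must instead be produced by a careful diagram chase with the chosen factorizations and the lifting axioms, and then transported along $G$; this is precisely the argument of \cite[Proposition 7.15]{JT07}. Everything else — the two adjointness-of-lifting reductions and the bookkeeping with the model category axioms — is formal.
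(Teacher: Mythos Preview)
The paper does not prove this lemma at all; it merely records the statement and cites \cite[Proposition~7.15]{JT07}. So there is no ``paper's own proof'' to compare against, and your sketch already goes well beyond what the paper does.

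Your outline is structurally correct: $(1)\Rightarrow(2)$ is routine, and for $(2)\Rightarrow(1)$ you correctly reduce to showing that $G$ preserves \emph{all} fibrations, and you set up the right diagram (the square $X\to\hat X\to\hat Y\leftarrow Y$ and the pullback $P=Y\times_{\hat Y}\hat X$). The gap is in the last step. You propose to obtain the retraction $r\colon P\to X$ from ``a diagonal filler obtained by playing the trivial cofibration $j$ (or $i$) off against the fibration $\hat p$''; but any such lift lands in $\hat X$, not in $X$, and there is no way to get back to $X$ without knowing that the comparison map $u=(p,j)\colon X\to P$ is a weak equivalence---which is exactly the right-properness statement you correctly say is unavailable. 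Factoring $u$ as (trivial cofibration)$\circ$(fibration) and lifting against $p$ does exhibit $p$ as a retract of a fibration $Q\to Y$, but the new fibration $Q\to P$ is again not between fibrant objects, so nothing is gained after applying $G$.

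The argument that actually works (and is the one in \cite{JT07}) attacks the other side: one shows directly that $Fa$ is a weak equivalence for every trivial cofibration $a$. Take a fibrant replacement $k\colon FB\to\widehat{FB}$, factor $k\circ Fa$ as a trivial cofibration $\alpha\colon FA\to C$ followed by a fibration $\beta\colon C\to\widehat{FB}$ (so $C$ is fibrant), and lift $Fa$ against $\beta$---which is now a fibration between fibrant objects, so the hypothesis applies---to get $d\colon FB\to C$ with $d\,(Fa)=\alpha$ and $\beta\,d=k$. Thus $d\,(Fa)$ and $\beta\,d$ are both weak equivalences, and the $2$-out-of-$6$ property (valid in any model category, via the homotopy category) forces $Fa$ to be one as well. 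This replaces your missing retract by a single lift plus $2$-out-of-$6$.
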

 
 \begin{lemone} \label{Lemma For Quillen equiv}
 (Special case of 
  \cite[Proposition 7.17, Proposition 7.22]{joyaltierney2007qcatvssegal})
  Let
  \begin{center}
   \adjun{\mathcal{M}}{\mathcal{N} }{F}{G}
  \end{center}

  be a Quillen adjunction of model categories. Then the following are equivalent:
  \begin{enumerate}
   \item $(F,G)$ is a Quillen equivalence.
   \item $F$ reflects weak equivalences between cofibrant objects and 
   the derived counit map $FLG(n) \to n$ is an equivalence for every fibrant-cofibrant object $n \in \mathcal{N}$
   (Here $LG(n)$ is a cofibrant replacement of $G(n)$ inside $\mathcal{M}$).
   \item $G$ reflects weak equivalences between fibrant objects and 
   the derived unit map $m \to GRF(m)$ is an equivalence for every fibrant-cofibrant object $m \in \mathcal{M}$
   (Here $RF(m)$ is a fibrant replacement of $F(m)$ inside $\mathcal{N}$).
  \end{enumerate}
 \end{lemone}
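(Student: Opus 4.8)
The plan is to establish the cycle $(1) \Rightarrow (3) \Rightarrow (1)$ and then obtain $(1) \Leftrightarrow (2)$ by a formal duality, so that all the real content lies in the equivalence of $(1)$ and $(3)$. Throughout I would use only standard facts about a Quillen pair $F \dashv G$: left Quillen functors preserve cofibrant objects and weak equivalences between them, right Quillen functors preserve fibrant objects and weak equivalences between them (Ken Brown's lemma), the adjunct of a map $f : F(X) \to Y$ is the composite $X \xrightarrow{\eta_X} GF(X) \xrightarrow{G(f)} G(Y)$, the two-out-of-three property, and the lifting/factorization axioms.

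For $(1) \Rightarrow (3)$ I would argue as follows. Given $m$ cofibrant, choose a fibrant replacement $F(m) \xrightarrow{\simeq} RF(m)$; since $m$ is cofibrant and $RF(m)$ is fibrant, the defining property of a Quillen equivalence says this map is a weak equivalence if and only if its adjunct $m \to GRF(m)$ is, and as the former holds the derived unit is a weak equivalence (no fibrancy of $m$ required). To see that $G$ reflects weak equivalences between fibrant objects, take $g : Y_1 \to Y_2$ between fibrant objects with $G(g)$ a weak equivalence, choose cofibrant replacements $QG(Y_i) \xrightarrow{\simeq} G(Y_i)$ together with a lift $QG(Y_1) \to QG(Y_2)$ over $G(g)$ (so $FQG(g)$ is a weak equivalence, being $F$ of a weak equivalence between cofibrant objects), and note that the derived counit $F(QG(Y_i)) \to Y_i$ --- the adjunct of the replacement map --- is a weak equivalence by the Quillen-equivalence property; then in the naturality square relating $FQG(g)$ to $g$ all maps except possibly $g$ are known to be weak equivalences, so $g$ is one by two-out-of-three.

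For $(3) \Rightarrow (1)$ I would first upgrade the derived-unit hypothesis from bifibrant $m$ to arbitrary cofibrant $m$: comparing $m$ with a fibrant replacement $m \xrightarrow{\simeq} m'$ in the naturality square of the derived unit, the two vertical maps are weak equivalences ($F$, resp. $G$, preserves weak equivalences between cofibrant, resp. fibrant, objects) and the bottom map is a weak equivalence since $m'$ is bifibrant, so the top map is. Then, given $X$ cofibrant, $Y$ fibrant, and $f : F(X) \to Y$ with adjunct $g : X \to G(Y)$, factor $f$ as $F(X) \xrightarrow{j} RF(X) \xrightarrow{h} Y$ with $j$ a trivial cofibration and $RF(X)$ fibrant; the point is that $h$ is now a map between fibrant objects and $g = G(h) \circ (\text{derived unit at } X)$ with the derived unit a weak equivalence. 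If $f$ is a weak equivalence then so is $h$, hence so is $G(h)$ and therefore $g$; conversely if $g$ is a weak equivalence then $G(h)$ is one by two-out-of-three, so $h$ is one since $G$ reflects weak equivalences between fibrant objects, and hence $f = h \circ j$ is. This gives that $(F,G)$ is a Quillen equivalence.

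Finally, $(1) \Leftrightarrow (2)$ follows by applying the equivalence $(1) \Leftrightarrow (3)$ to the opposite Quillen pair $(G^{\mathrm{op}}, F^{\mathrm{op}}) : \mathcal{N}^{\mathrm{op}} \rightleftarrows \mathcal{M}^{\mathrm{op}}$: being a Quillen equivalence is self-dual, the reflection clause becomes ``$F$ reflects weak equivalences between cofibrant objects of $\mathcal{M}$'', and fibrant replacement in $\mathcal{M}^{\mathrm{op}}$ is cofibrant replacement in $\mathcal{M}$, so the derived unit of the opposite pair is exactly the derived counit $FLG(n) \to n$ with $n$ ranging over bifibrant objects of $\mathcal{N}$. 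I do not expect a conceptual obstacle here; the main difficulty is the bookkeeping --- in particular, recognizing that a map $F(X) \to Y$ with $Y$ fibrant should be replaced by a map $RF(X) \to Y$ between fibrant objects so that the hypotheses about $G$ on fibrant objects apply, and tracking exactly which cofibrancy/fibrancy assumptions are needed at each step.
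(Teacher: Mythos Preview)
The paper does not supply its own proof of this lemma: it is stated in the appendix as a citation of \cite[Proposition 7.22]{JT07} and left unproven. So there is no argument in the paper to compare your proposal against.

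That said, your argument is correct and is essentially the standard proof one finds in the literature (e.g.\ Hovey or Joyal--Tierney). A couple of small points worth tightening: when you invoke the ``naturality square of the derived unit'' in the upgrade step, fibrant replacement is not literally functorial unless you assume functorial factorizations, so you should say explicitly that you construct the comparison map $RF(m) \to RF(m')$ by lifting $F(m) \to F(m') \to RF(m')$ against the trivial cofibration $F(m) \to RF(m)$, and that this map is a weak equivalence between fibrant objects by two-out-of-three. Similarly, in the main step of $(3)\Rightarrow(1)$ you factor $f$ itself (not $F(X)\to *$) to produce $RF(X)$; this is fine since $Y$ is fibrant so $RF(X)$ is fibrant, but it means your ``derived unit at $X$'' uses this particular replacement, and you should remark that the derived unit being a weak equivalence is independent of the choice of fibrant replacement (again by a lifting/two-out-of-three argument). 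With those clarifications the proof is complete.
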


 We move on to the main topic of this appendix, namely the existence of two localized model structures on 
 the category of simplicial spaces over a fixed simplicial space and their comparison.
 
 \begin{propone} \label{prop:induced model structure}
  \cite[Theorem 7.6.5]{hirschhorn2003modelcategories}
   Let $\mathcal{M}$ be a model structure on $\ss$.
   Let $X$ be a simplicial space. There is a simplicial model structure on $\ss_{/X}$, which we call the {\it induced model structure}
   and denote by $(\ss_{/X})^{\mathcal{M}}$, and which satisfies following conditions:
   \begin{itemize}
    \item[(F)] A map $f: Y \to Z$ over $X$ is a (trivial) fibration if $Y \to Z$ is a (trivial) fibration 
    \item[(W)] A map $f:Y \to Z$ over $X$ is a weak equivalence if $Y \to Z$ is a weak equivalence
    \item[(C)] A map $f:Y \to Z$ over $X$ (trivial) cofibration if $Y \to Z$ is a (trivial) cofibration.
   \end{itemize}
 \end{propone}
 
  \begin{remone}
  The induced model structure can be defined for any model category and not just for model structures on $\ss$, but for our work 
  there is no need for further generality.
 \end{remone}
 
 \begin{theone} \label{The Localization for sS}
  Let $X$ be a simplicial space and $\mathcal{L}$ be a set of monomorphisms in $\ss_{/X}$.
  There exists a cofibrantly generated, simplicial model category structure on $\ss_{/X}$ with the following properties:
  \begin{enumerate}
   \item The cofibrations are exactly the monomorphisms.
   \item The fibrant objects (called $\mathcal{L}$-local objects) are exactly the Reedy fibrations $W \to X \in \ss$ such that 
   $$\Map_{/X}(B, W) \to \Map_{/X}(A, W)$$
   is a weak equivalence of spaces for all maps $f: A \to B$ over $X$ in $\mathcal{L}$.
   \item The weak equivalences (called $\mathcal{L}$-local weak equivalences) are exactly the
   maps $g: Y \to Z$ over $X$ such that for every $\mathcal{L}$-local object $W \to X$ , the induced map
   $$\Map_{/X}(Z, W) \to \Map_{/X}(Y, W)$$
   is a weak equivalence.
   \item Let $f: Y \to Z$ be a map over $X$. 
   \begin{itemize}
    \item If $f$ is a Reedy weak equivalence over $X$ then it is a $\mathcal{L}$-local weak equivalence.
    \item If $f$ is a $\mathcal{L}$-local fibration then it is a Reedy fibration.
   \end{itemize}
   The opposite of both statements hold if $Y$ and $Z$ are $\mathcal{L}$-local.
  \end{enumerate}
  We call this model category the {\bf localized model structure}.
 \end{theone}
 
 The model structure is given as the left Bousfield localization of the induced Reedy model structure on $\ss_{/X}$.
 For a careful and detailed proof of the existence of left Bousfield localizations see \cite[Theorem 4.1.1]{hirschhorn2003modelcategories}
 (notice we are using the fact that the induced Reedy model structure on $\ss_{/X}$ is proper and cellular
 \cite[Proposition 12.1.6]{hirschhorn2003modelcategories}).
 For a nice summary of this proof that goes over the main steps see \cite[Proposition 9.1]{rezk2001css}.

 \begin{remone}
  Notice, we can in particular take $X$ to be the final object in which case the theorem gives us a localization model structure 
  of the Reedy model structure on $\ss$.
 \end{remone}
 
 Note any such model structure is invariant under Reedy equivalences.
 
 \begin{lemone} \label{lemma:localized model structure base change}
  Let $\mathcal{L}$ be a set of monomorphisms in $\ss$ and let $f: X \to Y$ be a map of simplicial spaces. 
  Then the adjunction 
  \begin{center}
   \adjun{(\ss_{/X})^{\mathcal{M}_X}}{(\ss_{/Y})^{\mathcal{M}_Y}}{f_!}{f^*}
  \end{center}
  is a Quillen adjunction which is a Quillen equivalence if $f$ is a Reedy equivalence. 
  Here the left hand side has the localization model structure with respect to maps $A \to B \to X$ for all maps $A \to B$ in $\mathcal{L}$ and 
  the right hand side has the localization model structure with respect to maps $A \to B \to Y$ for all maps $A \to B$ in $\mathcal{L}$.
 \end{lemone}

 \begin{proof}
  First we use \cref{Lemma For Quillen adj} to prove it is a Quillen adjunction. Clearly the left adjoint $f_!$ preserves monomorphisms 
  and thus weak equivalences. On the other hand, fibrations between fibrant objects are just Reedy fibrations, which are preserved by $f^*$. 
  Thus we only need to prove that $f^*$ preserves fibrant objects. 
  However, the fibrant objects are just Reedy fibrations which satisfy a right lifting property with respect to maps $A \to B$ in 
  $\mathcal{L}$ and the class of such maps is clearly closed under pullback.
  \par 
  Next we assume that $f$ is a Reedy equivalence and prove that the adjunction is a Quillen equivalence. We 
  will prove that the derived unit and derived counit maps are equivalences. 
  Let $p: Z \to X$ be an arbitrary map. The derived unit map, $f^*Rf_!$, is given by taking the Reedy fibrant replacement of the map 
  $Z \to X \to Y$ in $\ss_{/Y}$ and then pulling it back along $f: X \to Y$. 
  This can be depicted as the following diagram:
  \begin{center}
   \begin{tikzcd}[row sep=0.5in, column sep=0.3in]
    Z \arrow[dr, "p"'] \arrow[rrrrr, "i", bend left = 30, "\simeq"'] \arrow[rr, "u"] & & 
    f^*\hat{Z} \arrow[dl, "f^*f_!(\hat{p})"] \arrow[rrr, "\simeq"] & & & \hat{Z} \arrow[d, "f_!(\hat{p})"] \\
    & X \arrow[rrrr, "f", "\simeq"'] & & & & Y
   \end{tikzcd}
  \end{center}
  where $i: Z \to \hat{Z}$ is the Reedy fibrant replacement. 
  The map $f^*\hat{Z} \to \hat{Z}$ is a Reedy equivalence as Reedy equivalences are preserved by pullbacks. 
  Thus, by $2$-out-of-$3$, the derived unit map $u: Z \to f^*\hat{Z}$ is a Reedy weak equivalence.
  \par 
  We move on to the derived counit map. As all objects are cofibrant the derived counit map is simply given by the actual counit map.
  Let $W \to Y$ be an arbitrary map. Then we have the diagram 
  \begin{center}
   \begin{tikzcd}[row sep=0.5in, column sep=0.5in]
    f^*W \arrow[r, "\simeq"] \arrow[d] & W \arrow[d] \\
    X \arrow[r, "f", "\simeq"'] & Y 
   \end{tikzcd}
   .
  \end{center}
  As the Reedy model structure is right proper (\cref{Subsec Reedy Model Structure}) and thus Reedy weak equivalences are preserved by pullback, 
  the counit map $f^*W \to W$ is a Reedy equivalence and so also also an equivalence in the localized model structure 
  (\cref{The Localization for sS}). 
 \end{proof}

 Our precise understanding of the fibrant objects in the localized model structure allows us to simplify the conditions in 
 \cref{Lemma For Quillen adj}.
 
 \begin{corone} \label{Cor Quillen adj for localizations}
  Let $X$ be a simplicial space and
  let $(\ss_{/X},\mathcal{M})$ and $(\ss_{/X},\mathcal{N})$ be two localized model structures of the induced Reedy model structure.
  Then an adjunction 
  \begin{center}
   \adjun{(\ss_{/X})^{\mathcal{M}}}{(\ss_{/X})^{\mathcal{N}}}{F}{G}
  \end{center}
  is a Quillen adjunction if it satisfies the following conditions:
  \begin{enumerate}
   \item $F$ takes cofibrations to cofibrations.
   \item $G$ takes fibrants to fibrants.
   \item $G$ takes Reedy fibrations to Reedy fibrations.
  \end{enumerate}
 \end{corone}
 
 \begin{remone} \label{rem:localized vs induced localized} 
 Let $X$ be a simplicial space and $\mathcal{L}$ be a set of monomorphisms over $X$. Then we can now construct two 
 model structures on $\ss_{/X}$ using $\mathcal{L}$:
 \begin{enumerate}
  \item Using \cref{The Localization for sS} we can construct a localized model structure on the induced model structure $\ss_{/X}$.
  This is the localized model structure on $\ss_{/X}$.
  \item We can project $\mathcal{L}$ to $\ss$ to get a set of monomorphisms in $\ss$ and then use \cref{The Localization for sS} 
  to construct a localized model structure on $\ss$ and finally take the induced model structure on $\ss_{/X}$.
  We call this the {\it induced localized model structure} on $\ss_{/X}$.
 \end{enumerate}
 \end{remone}
 
 We want to understand how these two model structures compare to each other.
 Before we can do that we need a precise characterization of the fibrant objects in the localized model structure on $\ss_{/X}$.
 
 \begin{lemone} \label{lemma:fibrant objects in localized model structure}
  Let $X$ be a simplicial space and $\mathcal{L}$ a set of monomorphisms. 
  An object $p: Y \to X$ in $\ss_{/X}$ is fibrant in the localized model structure if and only if 
  it is a Reedy fibration and for every morphism $f: A \to B$ in $\mathcal{L}$ the commutative square
  \begin{center}
   \begin{tikzcd}[row sep=0.5in, column sep=0.5in]
    \Map(B,Y) \arrow[r, "f^*"] \arrow[d, "p_*"] & \Map(A,Y) \arrow[d, "p_*"] \\
    \Map(B,X) \arrow[r, "f^*"] & \Map(A,X)
   \end{tikzcd}
  \end{center}
  is a homotopy pullback square of spaces. 
 \end{lemone}

 \begin{proof}
  The square is a homotopy pullback square if and only if the horizontal map below is an equivalence 
  \begin{center}
   \begin{tikzcd}[row sep=0.5in, column sep=0.5in]
    \Map(B,Y) \arrow[rr] \arrow[dr, "p_*"'] & & \Map(A,Y) \times_{\Map(A,X)} \Map(B,X) \arrow[dl, "\pi_2"] \\
    & \Map(B,X) & 
   \end{tikzcd}
   .
  \end{center}
 By \cref{Cor Fiberwise equiv for Kan fib}, this is equivalent to proving that for each map $g: B \to X$ the induced map 
 $$\Map_{/X}(B,Y) \to \Map_{/X}(A,Y)$$
 is an equivalence, which is exactly the condition of being fibrant in the localized model structure (\cref{The Localization for sS}). 
 \end{proof}

 \begin{theone}\label{the:induced vs localized}
  Let $X$ be a simplicial space and $\mathcal{L}$ be a set of monomorphisms in $\ss_{/X}$. 
  Then the adjunction
  \begin{center}
   \adjun{(\ss_{/X})^{loc\mathcal{M}} }{(\ss_{/X})^{\mathcal{M}}}{id}{id}
  \end{center}
  is a Quillen adjunction, which is a Quillen equivalence if $X$ is fibrant. In fact, in this case the two model structures are isomorphic.
  Here the left hand side has the localized model structure and the right hand side has the induced localized model structure 
  (\cref{rem:localized vs induced localized}).
 \end{theone}

 \begin{proof}
  Both sides have the same set of cofibrations. In order to finish the proof it suffices to show that every fibrant object in 
  the induced localized model structure on $\ss_{/X}$ is fibrant in the localized model structure on $\ss_{/X}$
  and that the opposite holds if $X$ is fibrant in $\ss$.
  \par 
  Let $p: Y \to X$ be a Reedy fibration. Then we have following diagram:
   \begin{center}
   \begin{tikzcd}[row sep=0.5in, column sep=0.5in]
    \Map(B,Y) \arrow[r, "f^*"] \arrow[d, "p_*"] & \Map(A,Y) \arrow[d, "p_*"] \\
    \Map(B,X) \arrow[r, "f^*"] & \Map(A,X)
   \end{tikzcd}
   .
  \end{center}
  If $p$ is fibrant in the induced localized model structure on $\ss_{/X}$, 
  then the square above is a homotopy pullback square as it is a simplicial model structure 
  and $f: A \to B$ is a trivial cofibration in the localized model structure on $\ss$. However, by 
  \cref{lemma:fibrant objects in localized model structure}, this is equivalent to $p: Y \to X$ being fibrant in the 
  localized model structure on $\ss_{/X}$. This finishes one side and proves the adjunction above is a Quillen adjunction.
  \par 
  On the other hand, let us assume $X$ is fibrant in the localized model structure on $\ss$ 
  and $p: Y \to X$ is fibrant in the localized model structure on $\ss_{/X}$. 
  The fibrancy of $p: Y \to X$ implies, again by \cref{lemma:fibrant objects in localized model structure}, that the square 
    \begin{center}
   \begin{tikzcd}[row sep=0.5in, column sep=0.5in]
    \Map(B,Y) \arrow[r, "f^*", "\simeq"'] \arrow[d, "p_*"] & \Map(A,Y) \arrow[d, "p_*"] \\
    \Map(B,X) \arrow[r, "f^*", "\simeq"'] & \Map(A,X)
   \end{tikzcd}
  \end{center}
  is a homotopy pullback square and the fibrancy of $X$ implies the bottom map is a Kan equivalence. 
  Hence the top map is a Kan equivalence as well. This means that $Y$ is fibrant in the localized model structure on $\ss$. 
  Thus, by \cref{The Localization for sS}, the map $p: Y \to X$ is a fibration in the localized model structure on $\ss$, 
  as Reedy fibrations between fibrant objects are fibrations in the localized model structure. 
 \end{proof}

\section{Comparison with Quasi-Categories} \label{Sec Comparison with Quasi-Categories}

 In this part we confirm that the covariant model structure for simplicial sets coincides with the covariant model structure 
 on simplicial spaces, by proving they are Quillen equivalent, via two different Quillen equivalences. 
 \par 
 The trick is to realize that that the Quillen equivalences between quasi-categories and complete Segal spaces 
 constructed by Joyal and Tierney \cite{joyaltierney2007qcatvssegal} descend to Quillen equivalences between their 
 respective covariant model structures.
 \par 
 We will thus start by giving a quick review of the relevant results in \cite{joyaltierney2007qcatvssegal}
 and review the relevant definitions of the covariant model structure on simplicial sets. We will only focus on specific results 
 that we need in this section and refer the reader to the vast literature for any details \cite{heutsmoerdijk2015leftfibrationi},
 \cite{lurie2009htt}.
 
 \begin{notone} \label{not:sSet vs s}
  As before we use $\s^{Kan}$ to denote the category of simplicial sets with the Kan model structure. 
  On the other hand we use $\sSet^{Joy}$ for the category of simplicial sets with the Joyal model structure.
 \end{notone}
 
 \begin{defone} \label{def:left fibration qcats}
  \cite[Definition 2.0.0.3]{lurie2009htt}
  A map $f:S \to T$ of simplicial sets is a {\it left fibration} if it satisfies the right lifting property with respect to 
  all horn conclusions of the form 
  $\ds \Lambda^n_i \to \Delta[n]$, where $ 0 \leq i < n$.
 \end{defone}
 
 \begin{defone} \label{Def Covariant model structure on QCat}
  \cite[Definition 2.1.4.5, Proposition 2.1.4.7, Proposition 2.1.4.8]{lurie2009htt}
  Let $S \in \sSet$ be a simplicial set. There is a left proper, combinatorial, simplicial model structure on $\sSet_{/S}$, 
  called the covariant model structure
  and denoted by $(\sSet_{/S})^{cov}$, which satisfies following conditions:
  \begin{enumerate}
   \item A map $T \to U$ over $S$ is a cofibration if it is a monomorphism.
   \item The fibrant objects are the left fibrations
  \end{enumerate}
 \end{defone}
 
 We can also characterize the fibrations between fibrant objects.
 
 \begin{corone} \label{cor:cov fib between left fib}
  \cite[Corollary 2.2.3.14]{lurie2009htt}
  A map between left fibrations is a fibration in the covariant model structure if and only if it is a left fibration.
 \end{corone}

 There are several important theorems about the covariant model structure we are going to need later on.
 \begin{theone} \label{The Covariant local of Joyal}
  \cite[Theorem 3.1.5.1]{lurie2009htt}  
  Let $S$ be a simplicial set. Then the following adjunction
  \begin{center}
   \adjun{(\sSet_{/S})^{Joyal}}{(\sSet_{/S})^{cov}}{id}{id}
  \end{center}
  is a Quillen adjunction, where the left hand side has the Joyal model structure 
  and the right hand side has the covariant model structure.
  This implies that the covariant model structure is a localization of the Joyal model structure.
 \end{theone}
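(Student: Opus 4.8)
The plan is to verify directly that the left adjoint, which is the identity functor $(s\set_{/S})^{Joyal}\to(s\set_{/S})^{cov}$, is a left Quillen functor; since its right adjoint is also the identity this is exactly what is needed, and because both model structures on $s\set_{/S}$ have the monomorphisms as cofibrations it will at the same time exhibit the covariant model structure as a localization of the Joyal one. As the identity trivially preserves cofibrations, the content is to show that every trivial cofibration of $(s\set_{/S})^{Joyal}$ is a trivial cofibration of $(s\set_{/S})^{cov}$; again the monomorphism part is automatic, so what must be proved is: if $i\colon A\hookrightarrow B$ is a map over $S$ whose underlying map is a categorical equivalence, then $i$ is a covariant equivalence over $S$. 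By Definition \ref{Def Covariant model structure on QCat} this means showing that
$$\c[A^{\lhd}\coprod_A S]\longrightarrow\c[B^{\lhd}\coprod_B S]$$
is a Dwyer--Kan equivalence of simplicial categories.

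First I would isolate the two homotopical inputs concerning the Joyal model structure on $s\set$: (i) the left cone functor $X\mapsto X^{\lhd}=\Delta^{0}\star X$ carries categorical equivalences to categorical equivalences, and the inclusion $X\hookrightarrow X^{\lhd}$ is a monomorphism; and (ii) every object of $s\set$ is cofibrant, so a pushout along a monomorphism is a homotopy pushout and the gluing lemma applies. Using (i) and the hypothesis on $i$, the three vertical maps in the map of spans
$$\big(A^{\lhd}\hookleftarrow A\to S\big)\longrightarrow\big(B^{\lhd}\hookleftarrow B\to S\big)$$
are categorical equivalences, and by (ii) both rows are homotopy pushouts; hence the induced map $A^{\lhd}\coprod_A S\to B^{\lhd}\coprod_B S$ is a categorical equivalence.

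Finally I would pass through $\c$. Since $\c\dashv N$ is a Quillen equivalence from the Joyal model structure to the Bergner model structure on simplicial categories, and every object of $s\set$ is cofibrant, Ken Brown's lemma shows that $\c$ sends every categorical equivalence to a Dwyer--Kan equivalence. Applying this to $A^{\lhd}\coprod_A S\to B^{\lhd}\coprod_B S$ finishes the proof that $i$ is a covariant equivalence over $S$, and hence that the identity is left Quillen. The one genuinely nonformal ingredient is input (i)---that coning off is homotopy invariant for the Joyal rather than the Kan model structure---which I expect to be the main obstacle; it is, however, classical and can be cited from \cite{Lu09}. An alternative would be to invoke Lemma \ref{Lemma For Quillen adj} and instead check that a covariant fibration between left fibrations over $S$ is a categorical fibration over $S$, but controlling covariant fibrations directly seems less convenient than the weak-equivalence argument above.
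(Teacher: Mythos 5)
Your argument is correct, but note that the paper does not actually prove this statement: it is imported wholesale from Lurie with the citation \cite[Theorem 3.1.5.1]{Lu09}, so there is no internal proof to compare against. What you have written is essentially the standard argument one would extract from \cite{Lu09}. Since both model structures on $s\set_{/S}$ have the monomorphisms as cofibrations, reducing the left Quillen condition for the identity to the claim that a categorical equivalence over $S$ is a covariant equivalence is exactly right, and your three inputs do the job: homotopy invariance of the left cone $X\mapsto\Delta^{0}\star X$ for the Joyal model structure, the gluing lemma for pushouts along cofibrations of cofibrant objects applied to the map of spans $(A^{\lhd}\hookleftarrow A\to S)\to(B^{\lhd}\hookleftarrow B\to S)$, and the fact that $\c$ carries categorical equivalences to Dwyer--Kan equivalences (which, in Lurie's conventions, is true by the very definition of categorical equivalence, so this last step is essentially tautological). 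You correctly identify the only genuinely nonformal ingredient, namely that the join preserves categorical equivalences in each variable; this must itself be cited (it is \cite[Proposition 1.2.8.3 and its refinements in Chapter 4]{Lu09}), so your proof is not fully self-contained either, but it is a legitimate and more informative route than the paper's bare citation. Your proposed alternative via Lemma \ref{Lemma For Quillen adj} is also correctly oriented (one would need covariant fibrations between left fibrations to be categorical fibrations), and your judgement that the weak-equivalence argument is the easier one is sound.
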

 
 \begin{theone} \label{the:covar model structure invariant categorical equiv}
  \cite[Proposition 2.1.4.10, Remark 2.1.4.11]{lurie2009htt}
  Let $f: S \to T$ be a map of simplicial sets. Then the adjunction 
  \begin{center}
   \adjun{(\sSet_{/S})^{cov}}{(\sSet_{/T})^{cov}}{f_!}{f^*}
  \end{center}
  is a Quillen adjunction, which is a Quillen equivalence if $f$ is a categorical equivalence.
  Here both sides have the covariant model structure.
 \end{theone}

 We now move on to review the main results in \cite{joyaltierney2007qcatvssegal}.

 \begin{theone} \label{The CSS and Qcat equiv with ip}
  \cite[Theorem 4.11]{joyaltierney2007qcatvssegal} 
  Let $p_1^*: \sSet \to \ss$ be the functor which associates to each simplicial set $S$ the simplicial discrete space $p_1^*(S)$ 
  defined as $(p_1^*(S))_{nl} = S_n$ and 
  let $i_1^*: \ss \to \sSet$ be its right adjoint. 
  Then this gives us the following adjunction
  \begin{center}
   \adjun{(\sSet)^{Joy}}{(\ss)^{CSS}}{p_1^*}{i_1^*}
  \end{center}
  which is a Quillen equivalence, where $\sSet$ has the Joyal model structure and $\ss$ has the CSS model structure. 
 \end{theone}

 \begin{theone} \label{The CSS and Qcat equiv with t}
  \cite[Theorem 4.12]{joyaltierney2007qcatvssegal} 
  Let $t_! : \ss \to \sSet$ be the left Kan extension of the map which is defined on the generators $F(n) \times \Delta[l]$ as
  $t_!(F(n) \times \Delta[l]) = \Delta[n] \times J[l]$. Let $t^!: \sSet \to \ss$ be the right adjoint of this construction, 
  i.e. $t^!(S)_{nl} = \Hom_{\ss}(\Delta[n] \times J[l], S)$. Then this defines a Quillen equivalence
  \begin{center}
   \adjun{(\ss)^{CSS}}{(\sSet)^{Joy}}{t_!}{t^!}
  \end{center}
  with $\ss$ having the CSS model structure and $\sSet$ having the Joyal model structure.
 \end{theone}
 
 Although there are two different adjunctions, they have nice interactions which we will need later on.
 
 \begin{lemone} \label{lemma:gS CSS equiv}
  \cite[Proposition 4.10]{joyaltierney2007qcatvssegal}
  Let $S$ be a quasi-category. Then the natural map 
  $$g_S: p_1^*S \to t^!S$$
  is an equivalence in the CSS model structure.
 \end{lemone}
  
  There is an analogous statement for complete Segal spaces.
  
 \begin{lemone} \label{lemma:h X categorical equivalence} 
  Let $X$ be a complete Segal space, then the natural map 
  $$h_X: i_1^*X \to t_!X$$
  is a categorical equivalence. 
 \end{lemone}

 We now move on to the main topic of this section: the two Quillen equivalences.
 
 \begin{remone}
  Let 
  \begin{center}
   \adjun{\C}{\D}{F}{G}
  \end{center}
  be an adjunction of categories and $C$ an object in $\C$. Then we get an adjunction 
  \begin{center}
   \adjun{\C_{/C}}{\D_{/FC}}{F}{u^*G}
  \end{center}
  where the left adjoint takes a map $f: D \to C$ to $Ff: FD \to FC$ and the right adjoint takes a map 
  $f:D \to FC$ to the pullback $u^*(G(f)): u^*G(D) \to C$, where $u: C \to GFC$ is the unit map.
 \end{remone}

 \begin{theone} \label{The Covariant equivalences ip}
 Let $S$ be a simplicial set. 
 The adjunction
  \begin{center}
   \adjun{(\sSet_{/S})^{cov}}{(\ss_{/p_!^*S})^{cov}}{p_1^*}{u^*i_1^*}
  \end{center}
 is a Quillen equivalence, where both sides have the covariant model structure.
 \end{theone}

 \begin{theone} \label{The Covariant equivalences t}
  The adjunction 
  \begin{center}
   \adjun{(\ss_{/X})^{cov}}{(\sSet_{/t_! X})^{cov}}{t_!}{u^*t^!}
  \end{center}
  is a Quillen equivalence, where both sides have the covariant model structure.
 \end{theone}

 We will prove these two theorems in three steps:
 \begin{enumerate}
  \item Prove they are Quillen adjunctions: \cref{lemma:ip Quillen adj}, \cref{lemma:t Quillen adj}.
  \item Reduce the proof to complete Segal spaces and quasi-categories: \cref{rem:ip and t reduce to fibrants}.
  \item Prove they are Quillen equivalences: \cref{lemma:ip Quillen equiv qcat}, \cref{lemma:t Quillen equiv CSS}.
 \end{enumerate}

 We start with the Quillen adjunctions.
 
 \begin{lemone} \label{lemma:ip Quillen adj}
  The adjunction
  \begin{center}
   \adjun{(\sSet_{/S})^{cov}}{(\ss_{/p_1^*S})^{cov}}{p_1^*}{u^*i_1^*}
  \end{center}
  is a Quillen adjunction, where both sides have the covariant model structure.
 \end{lemone}
 \begin{proof}
  We use \cref{Lemma For Quillen adj}.
  Clearly, $p_1^*$ takes cofibrations to cofibrations as they are just inclusions. 
  So, all that is left is to show that $u^*i_1^*$ takes fibrations between fibrant objects to fibrations.
  By \cref{lemma:fib between left fib} a fibration between fibrant objects is just a left fibration. 
  Thus it suffices to prove that $u^*i_1^*$ preserves left 
  fibrations. The map $u^*$ just pulls back along the unit, which preserves left fibrations, as it is given by a right lifting property 
  (\cref{def:left fibration qcats}).
  So it suffices to prove that $i_1^*$ preserves left fibrations.
  \par 
  Let $p:L \to p_1^*S$ be a left fibration. We have to show that $i_1^*(p)$ satisfies the right lifting property with respect to horns 
  $\Lambda[n]_i \to \Delta[n]$ where $0 \leq i < n$ (\cref{def:left fibration qcats}). 
  Using the adjunction $(p_1^*,i_1^*)$ This is equivalent to $p$ having the 
  right lifting property with to the maps $j: L(n)_i \to F(n)$, which means we have to prove $j$ is a trivial cofibration in $\ss$ with respect 
  to the covariant model structure. 
  \par 
  From \cref{cor:covariant equiv over F n} and the fact that 
  $\ordered{0,...,k} \times_{\id_{F(n)}} j: F(k) \times_{F(n)} L(n)_i \to F(k)$ is a diagonal equivalence (both sides are
  diagonally contractible), we deduce that $j$ is a covariant equivalence over $F(n)$ and so is a covariant
  equivalence over $p_1^*S$ as well (\cref{The Base change covar}).
 \end{proof}
 
 \begin{lemone} \label{lemma:t Quillen adj}
  The adjunction 
  \begin{center}
   \adjun{(\ss_{/X})^{cov}}{(\sSet_{/t_!X})^{cov}}{t_!}{u^* t^!}
  \end{center}
  is a Quillen adjunction where both sides have the covariant model structure.
 \end{lemone}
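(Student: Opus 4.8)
The plan is to apply Lemma~\ref{Lemma For Quillen adj}, so that it suffices to check that $t_!$ carries cofibrations to cofibrations and that $t^!$ carries fibrations between fibrant objects to fibrations. The statement about cofibrations is immediate: in both covariant model structures the cofibrations are exactly the monomorphisms, $t_!$ is a left adjoint and hence preserves the colimits and pushouts describing the slice categories, and $t_!$ preserves monomorphisms because it is the left adjoint of the Quillen pair $(t_!,t^!)\colon (s\s)^{CSS}\to(s\set)^{Joy}$ of Theorem~\ref{The CSS and Qcat equiv with ip}, whose cofibrations on both sides are the monomorphisms.

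For the second condition I would argue as follows. By Theorem~\ref{The Covariant local of Joyal} the covariant model structure on $s\set_{/t_!X}$ is a localization of the induced Joyal model structure, so every covariant fibration there --- and in particular every left fibration over $t_!X$ in the sense of \cite{Lu09} --- is a categorical fibration of simplicial sets; conversely, by Theorem~\ref{The Covariant Model Structure}(5) a covariant fibration between fibrant objects of $(s\s_{/X})^{cov}$ is a Reedy fibration, and a Reedy fibration between two covariantly fibrant objects is again a covariant fibration (a standard property of left Bousfield localizations). Here $t^!\colon s\set_{/t_!X}\to s\s_{/X}$ denotes the slice right adjoint, which sends $T\to t_!X$ to $X\times_{t^!t_!X}t^!T\to X$ via the unit $X\to t^!t_!X$. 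It therefore suffices to prove: \emph{(i)} $t^!$ carries categorical fibrations to Reedy fibrations; and \emph{(ii)} $t^!$ carries fibrant objects to left fibrations over $X$ in the sense of Definition~\ref{Def Of Left Fibration}. Claim \emph{(i)} is easy: by Theorem~\ref{The CSS and Qcat equiv with ip} the functor $t^!\colon (s\set)^{Joy}\to(s\s)^{CSS}$ is right Quillen, so a categorical fibration goes to a $CSS$ fibration, in particular a Reedy fibration, and pulling back along $X\to t^!t_!X$ preserves Reedy fibrations.

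The main obstacle is claim \emph{(ii)}. Since pullbacks of left fibrations are left fibrations (Lemma~\ref{Lemma Pullback preserves Left fibrations}), it reduces to the base-free statement that for every left fibration $p\colon T\to S$ of simplicial sets, $t^!T\to t^!S$ is a left fibration in the sense of Definition~\ref{Def Of Left Fibration}. By \emph{(i)} this map is already a Reedy fibration (being fibrant in the covariant model structure over $S$, the map $p$ is in particular a categorical fibration), so it remains to check the homotopy-pullback condition of Definition~\ref{Def Of Left Fibration}, i.e.\ that for each $n$ the comparison map
$$ (t^!T)_n \longrightarrow (t^!T)_0 \underset{(t^!S)_0}{\times} (t^!S)_n $$
is a weak equivalence; I would in fact establish the stronger statement that it is a trivial Kan fibration, by unwinding the construction of $t^!$. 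Since $(t^!T)_{n,l}=Hom_{s\set}(\Delta[n]\times J[l],T)=Hom_{s\set}(J[l],T^{\Delta[n]})$, the space $(t^!T)_n$ is the value at $T^{\Delta[n]}$ of the right adjoint of the cocontinuous functor $s\set\to s\set$ sending $\Delta[m]$ to $J[m]$; as this right adjoint preserves limits, the comparison map above is obtained by applying it to the map $T^{\Delta[n]}\to T\times_S S^{\Delta[n]}$ induced by the initial vertex $\{0\}\hookrightarrow\Delta[n]$. That inclusion is left anodyne, and so are all of its pushout-products with the boundary inclusions $\partial\Delta[m]\hookrightarrow\Delta[m]$, so that, since $p$ is a left fibration, the map $T^{\Delta[n]}\to T\times_S S^{\Delta[n]}$ is a trivial Kan fibration; and since the functor $\Delta[m]\mapsto J[m]$ sends each $\partial\Delta[m]\hookrightarrow\Delta[m]$ to a monomorphism, its right adjoint carries trivial Kan fibrations to trivial Kan fibrations. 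This yields the claim. I expect this last identification --- matching the comparison map with the $J[\bullet]$-resolution functor applied to a left-anodyne lifting datum, together with checking how that functor interacts with the Kan model structure --- to be the only genuinely delicate step in the argument.
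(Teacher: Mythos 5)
Your proof is correct, but the key step --- showing that $t^!$ carries left fibrations of simplicial sets to left fibrations of simplicial spaces --- is handled by a genuinely different argument than the paper's. The paper never unwinds the formula for $t^!$: it notes that the locality condition for $t^!T \to t^!t_!X$ is adjoint, via $t_!F(n)=\Delta[n]$ and $t_!F(0)=\Delta[0]$, to the assertion that $\Delta[0]\to\Delta[n]$ is a trivial cofibration in the covariant model structure on $s\set_{/t_!X}$, and it deduces that assertion from the Quillen equivalence $(p_1^*,i_1^*)$ established immediately beforehand, since $p_1^*$ reflects covariant equivalences and $p_1^*(\Delta[0]\to\Delta[n])$ is $F(0)\to F(n)$. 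You instead compute $(t^!T)_n$ as the $J[\bullet]$-resolution of $T^{\Delta[n]}$, identify the comparison map with that resolution functor applied to $T^{\Delta[n]}\to T\times_S S^{\Delta[n]}$, invoke the left-anodyne pushout-product property of left fibrations to see the latter is a trivial fibration, and finish with the fact that the realization $\Delta[m]\mapsto J[m]$ preserves monomorphisms. This is a valid, self-contained verification, but it imports machinery (left anodyne maps and their pushout-product stability, i.e.\ \cite[Corollary 2.1.2.9]{Lu09}) that the paper never develops, and the two facts you assert without justification --- that pushout-product property, and that the $J[\bullet]$-realization of $\partial\Delta[m]\hookrightarrow\Delta[m]$ is a monomorphism --- both need citations; the latter is true because that realization is $t_!$ precomposed with $K\mapsto F(0)\times K$ and $t_!$ preserves cofibrations by Theorem~\ref{The CSS and Qcat equiv with ip}. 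The paper's route is shorter and stays internal to results already proved in the appendix, at the cost of depending on the order in which the two comparison theorems are established; yours is independent of that ordering. Your treatment of the cofibration condition and the reduction via Lemma~\ref{Lemma For Quillen adj} to checking fibrant objects and Reedy fibrations matches the paper's.
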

 \begin{proof}
  We will show the adjunction satisfies the three conditions of \cref{Lemma For Quillen adj}.
  Clearly, $t_!$ takes cofibrations to cofibrations as they are just monomorphisms.
  Thus we only need to prove that $u^*t^!$ preserves fibrant objects and fibrations between fibrant objects.
  By \cref{cor:cov fib between left fib}, a fibration between fibrant objects is a left fibration, thus it suffices to prove 
  that $u^* t^!$ preserves left fibrations. 
  \par 
  Let $p: L \to t_!X$ be a left fibration. Then, by \cref{The Covariant local of Joyal}, it is a fibration in the Joyal model structure and so 
  $u^*t^!(p)$ is a CSS fibration, by \cref{The CSS and Qcat equiv with t}, 
  and so in particular a Reedy fibration. Thus we only have to prove that for every map $F(n) \to X$ the induced map 
  $$\ordered{0}^*: \Map_{/X}(F(n),u^*t^!(p) ) \to \Map_{/X}(F(0), u^*t^!(p)) $$ 
  is a Kan equivalence.
  By adjunction, this is equivalent to proving that $\ordered{0}: \Delta[0] = t_!(F(0)) \to t_!(F(n))= \Delta[n]$ is a covariant equivalence over $t_!X$.
  However, it is a well-established fact that the map $\ordered{0}: \Delta[0] \to \Delta[n]$ is a covariant equivalence over $\Delta[n]$ 
  and so in particular over $t_!X$ 
  (by \cref{The Base change covar}). For an elegant proof of this fact see \cite[Lemma 2.5]{heutsmoerdijk2015leftfibrationi}.
 \end{proof}
 
 Next we will reduce the proof to the case of complete Segal spaces and quasi-categories.
 
 \begin{remone} \label{rem:ip and t reduce to fibrants}
  Let $S$ be a simplicial set and choose a quasi-category fibrant replacement $i:S \to \hat{S}$ and 
  let $X$ be a simplicial space and choose a complete Segal space fibrant replacement $j: X \to \hat{X}$.
  Then we have following diagram of Quillen adjunctions:
  \begin{center}
   \begin{tikzcd}[row sep=0.5in, column sep=0.5in]
    (\sSet_{/S})^{cov} \arrow[r, shift left = 1.8, "p_1^*", "\bot"'] \arrow[d, shift left = 1.8, "i_!", "\rotatebot"'] & 
    (\ss_{/p_1^*S})^{cov} \arrow[l, shift left = 1.8, "u^*i_1^*"] \arrow[d, shift left = 1.8, "(p_1^*(i))_!", "\rotatebot"'] 
    & & 
    (\ss_{/X})^{cov} \arrow[r, shift left = 1.8, "t_!", "\bot"'] \arrow[d, shift left = 1.8, "j_!", "\rotatebot"'] & 
    (\sSet_{/t_!X})^{cov} \arrow[l, shift left = 1.8, "u^*t^!"] \arrow[d, shift left = 1.8, "(t_!(j))_!", "\rotatebot"'] 
    \\
    (\sSet_{/\hat{S}})^{cov} \arrow[r, shift left = 1.8, "p_1^*", "\bot"'] \arrow[u, shift left = 1.8, "i^*"] & 
    (\ss_{/p_1^*\hat{S}})^{cov} \arrow[l, shift left = 1.8, "u^*i_1^*"] \arrow[u, shift left = 1.8, "(p_1^*(i))^*"] 
    & & 
    (\ss_{/\hat{X}})^{cov} \arrow[r, shift left = 1.8, "t_!", "\bot"'] \arrow[u, shift left = 1.8, "j^*"] & 
    (\sSet_{/t_!\hat{X}})^{cov} \arrow[l, shift left = 1.8, "u^*t^!"] \arrow[u, shift left = 1.8, "(t_!(j))^*"] 
   \end{tikzcd}
  \end{center}
  All vertical Quillen adjunctions are Quillen equivalences 
  (\cref{the:covar model structure invariant categorical equiv}, \cref{The Covar invariant under CSS equiv}) 
  thus the top horizontal Quillen adjunctions are Quillen equivalences 
  if and only if the bottom Quillen adjunctions are Quillen equivalences. 
 \end{remone}

 We are now ready to move on to the last step.
 
 \begin{lemone}\label{lemma:ip Quillen equiv qcat}
  Let $S$ be a quasi-category. Then the adjunction 
   \begin{center}
   \adjun{(\sSet_{/S})^{cov}}{(\ss_{/p_!^*S})^{cov}}{p_1^*}{u^*i_1^*}
  \end{center}
  is a Quillen equivalence, where both sides have the covariant model structure.
  \begin{center}
  \end{center}
 \end{lemone}

 \begin{proof}
  We prove that the derived unit and counit maps are equivalences. 
  \par 
  First we prove the derived counit map is an equivalence. 
  Let $p: L \to p_1^*S$ be a left fibration. We need to prove that the map $p_1^*u^*i_1^*L \to L$ is a covariant equivalence over $p_1^*S$.
  By \cref{The CSS and Qcat equiv with ip}, $u: S \to i_1^* p_1^* S$ is the identity map. 
  Hence we only need to prove that $p_1^*i_1^*L \to L$ is a covariant equivalence over $p_1^*S$. 
  However, this follows immediately from the fact that $p_1^*i_1^*L \to L$ is a complete Segal space equivalence (by 
  \cref{The CSS and Qcat equiv with ip}) and hence a covariant equivalence over $p_1^*S$ (\cref{The Covariant local of CSS}).

  \medskip  
  
  We move on to prove the derived unit map is an equivalence. Let $p:L \to S$ be a left fibration. Then $p_1^*L \to p_1^*S$ is not 
  a left fibration and so we need to find left fibrant replacement. 
  We have following diagram:
  \begin{center}
   \begin{tikzcd}[row sep=0.5in, column sep=0.5in]
    p_1^*L \arrow[dr] \arrow[drr, bend left=20, "g_L", "\simeq"'] \arrow[ddr, bend right=20] &[-0.2in] & \\[-0.2in]
    & (g_S)^*t^!L \arrow[d, twoheadrightarrow] \arrow[r, "\simeq"] & t^!L \arrow[d, twoheadrightarrow] \\
    & p_1^*S \arrow[r, "g_S", "\simeq"'] & t^!S 
   \end{tikzcd}
  \end{center}
  By \cref{lemma:t Quillen adj}, $t^!L \to t^!S$ is a left fibration and so $(g_S)^*(t_!L) \to p_1^*S$ is a left fibration 
  (\cref{Lemma Pullback preserves Left fibrations}). Moreover, 
  $(g_S)^*t^!L \to t^!L$ is a CSS equivalence, as pulling back along left fibrations preserves CSS equivalences (\cref{The Pullback preserves CSS equiv})
  and $g_L$ is a CSS equivalence by \cref{lemma:gS CSS equiv}. Thus $p_1^*L \to (g_S)^*t^!L$ is a CSS equivalence over $p_1^*S$ and so a 
  covariant equivalence over $p_1^*S$ (by \cref{The Covariant local of CSS}). 
  Hence, $p_1^*L \to (g_S)^*t^!L$ is the derived unit map.
  \par 
  Thus, in order to prove the derived unit is an equivalence we only need to show that 
  $$L \to i_1^*p_1^*L \to (i_1^*)(g_S)^*t^!L$$
  is a covariant equivalence over $p_1^*S$. However, both maps are just the identity map and hence we are done. 
 \end{proof}

 \begin{lemone} \label{lemma:t Quillen equiv CSS}
  Let $X$ be a complete Segal space. 
  Then the adjunction 
  \begin{center}
   \adjun{(\ss_{/X})^{cov}}{(\sSet_{/t_! X})^{cov}}{t_!}{u^*t^!}
  \end{center}
  is a Quillen equivalence, where both sides have the covariant model structure.
 \end{lemone}
 
 \begin{proof}
  We have the following chain of Quillen adjunctions:
  \begin{center}
   \begin{tikzcd}[row sep=0.5in, column sep=0.5in]
    (\sSet_{/i_1^* X})^{cov} \arrow[r, shift left = 1.6, "p_1^*"] &
    (\ss_{/p_1^*i_1^*X})^{cov} \arrow[r, shift left = 1.6, "c_!"] \arrow[l, shift left = 1.6, "u^*i_1^*"] &
    (\ss_{/X})^{cov} \arrow[r, shift left = 1.6, "t_!"] \arrow[l, shift left = 1.6, "c^*"] &
    (\sSet_{/t_!X})^{cov} \arrow[l, shift left = 1.6, "u^*t^!"] 
   \end{tikzcd}
  \end{center}
  Here $c: p_1^*i_1^*X \to X$ is the counit map of the adjunction.
  \par 
  The first adjunction is a Quillen equivalence by \cref{lemma:ip Quillen equiv qcat}. 
  The middle one is a Quillen equivalence by \cref{The Covar invariant under CSS equiv}, as $c$ is an equivalence of 
  complete Segal spaces (as proven in \cref{The CSS and Qcat equiv with ip}). 
  Finally, the composition of these three adjunctions gives us the Quillen adjunction:
   \begin{center}
   \adjun{(\sSet_{/i_1^* X})^{cov}}{(\sSet_{/t_!X})^{cov}}{(h_X)_!}{(h_X)^*}.
  \end{center}
  It is induced by the map $h_X: i_1^*X \to t_!X$ which is a categorical equivalence, by \cref{lemma:h X categorical equivalence}, 
  and so a Quillen equivalence.
  \par 
  Thus by $2$-out-of-$3$ the adjunction $(t_!,u^*t^!)$ is a Quillen equivalence. 
 \end{proof}

 The Quillen equivalence above has an interesting corollary.
 
 \begin{corone}
  The covariant model structure on $(\sSet_{/S})^{cov}$ is a localization of the Joyal model structure with respect 
  to the set of maps $\Delta[0] \to \Delta[n] \to S$.
 \end{corone}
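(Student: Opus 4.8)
Write $\mathcal{M}$ for the induced Joyal model structure on $s\set_{/S}$ and let $\mathcal{L}_S = \{\Delta[0] \hookrightarrow \Delta[n] \to S : n \geq 0\}$ be the set of all initial–vertex inclusions over $S$. Since $\mathcal{M}$ is left proper and combinatorial and $\mathcal{L}_S$ is a set, the left Bousfield localization $L_{\mathcal{L}_S}\mathcal{M}$ exists; the covariant model structure $(s\set_{/S})^{cov}$ is also a left Bousfield localization of $\mathcal{M}$ (Theorem \ref{The Covariant local of Joyal}), so the two share their cofibrations, namely the monomorphisms. A left Bousfield localization of $\mathcal{M}$ is determined by its class of weak equivalences, and those are exactly the maps that become weak equivalences after mapping into the fibrant (local) objects. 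Hence the plan is to show that the fibrant objects agree, i.e. that a categorical fibration $p\colon T \to S$ is $\mathcal{L}_S$-local if and only if it is a left fibration in the sense of \cite[Definition 2.0.0.3]{Lu09}, these being the fibrant objects of $(s\set_{/S})^{cov}$; the identification of the two model structures follows formally.

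First I would record that each map $\Delta[0] \hookrightarrow \Delta[n] \to S$ is a covariant equivalence over $S$: applying $p_1^*$ turns it into $F(0) \hookrightarrow F(n) \to p_1^* S$, which is one of the localizing maps of Theorem \ref{The Covariant Model Structure} and hence a covariant equivalence, and since all objects are cofibrant and $(p_1^*, i_1^*)$ is a Quillen equivalence of covariant model structures (Theorem \ref{The Covariant equivalences t}), its left adjoint $p_1^*$ reflects weak equivalences, giving the claim. Consequently, if $p\colon T \to S$ is covariant-fibrant then, the covariant model structure being simplicial, mapping the covariant trivial cofibration $\Delta[0] \hookrightarrow \Delta[n] \to S$ into $T$ yields a trivial Kan fibration $\mathrm{Map}_{/S}(\Delta[n], T) \to \mathrm{Map}_{/S}(\Delta[0], T)$; thus $T$ is $\mathcal{L}_S$-local, and it is categorically fibrant because $(s\set_{/S})^{cov}$ is a localization of $\mathcal{M}$. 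This settles one inclusion of fibrant objects.

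For the converse — the genuine content — I would take $p\colon T \to S$ that is $\mathcal{L}_S$-local and a categorical fibration, so fibrant in $L_{\mathcal{L}_S}\mathcal{M}$, and show it is a left fibration by checking that every generating left-anodyne inclusion $\Lambda^n_i \hookrightarrow \Delta[n]$ with $0 \le i < n$ is, over $S$, a trivial cofibration in $L_{\mathcal{L}_S}\mathcal{M}$: the required lifting property against $p$ is then immediate. For $0 < i < n$ the inclusion is already a Joyal equivalence, hence $\mathcal{M}$- and a fortiori $L_{\mathcal{L}_S}\mathcal{M}$-trivial. For the outer case $i = 0$ the point is that the initial vertex $0$ belongs to every nonmissing facet $d_j\Delta[n]$ ($j \geq 1$) of $\Lambda^n_0$; running an induction over the standard filtration of $\Lambda^n_0$ by these facets and their intersections — using that the maps of $\mathcal{L}_S$ are $L_{\mathcal{L}_S}\mathcal{M}$-trivial cofibrations, that trivial cofibrations are closed under pushout and transfinite composition, and two-out-of-three along $\{0\} \hookrightarrow \Lambda^n_0 \hookrightarrow \Delta[n]$ (whose outer legs lie in, or are built by pushouts from, $\mathcal{L}_S$) — yields that $\Lambda^n_0 \hookrightarrow \Delta[n]$ over $S$ is an $L_{\mathcal{L}_S}\mathcal{M}$-weak equivalence, hence a trivial cofibration. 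I expect this inductive bookkeeping for the outer horn to be the only technical step; with it in hand, $p$ has the right lifting property against all $\Lambda^n_i \hookrightarrow \Delta[n]$ with $0 \le i < n$, so $p$ is a left fibration, the fibrant objects of the two localizations of $\mathcal{M}$ coincide, and therefore the model structures coincide.
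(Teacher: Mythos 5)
Your proposal is correct, but it reaches the conclusion by a genuinely different route from the paper. The paper states this corollary with no separate argument: it is extracted from the chain of Quillen equivalences just established, the point (made explicit in the remark that follows) being that both the covariant model structure and the localization of the Joyal model structure at $\mathcal{L}_S=\{\Delta[0]\to\Delta[n]\to S\}$ are carried by $p_1^*$ to the covariant model structure on simplicial spaces --- which is \emph{defined} as the localization at $\{F(0)\to F(n)\}$ --- and since the left adjoint of a Quillen equivalence with all objects cofibrant detects weak equivalences, the two structures on $s\set_{/S}$ have the same weak equivalences and hence coincide. You instead compare fibrant objects directly. Your easy containment (covariant-fibrant implies $\mathcal{L}_S$-local) uses exactly the paper's observation that $\Delta[0]\to\Delta[n]$ is a covariant trivial cofibration because $p_1^*$ reflects covariant equivalences; but your hard containment is a purely combinatorial argument inside $s\set_{/S}$, showing that the generating left-anodyne maps $\Lambda^n_i\to\Delta[n]$, $0\le i<n$, are trivial cofibrations in the $\mathcal{L}_S$-localization (inner horns because they are already Joyal equivalences, the outer horn $\Lambda^n_0\to\Delta[n]$ by two-out-of-three against initial-vertex inclusions). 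This buys a self-contained proof on the simplicial-set side that in effect reproves, rather than imports, the relevant piece of Lurie's theory of the covariant model structure; the paper's route gets the statement for free from the equivalences but leaves the identification of fibrant objects implicit. The one step you only sketch --- that $\{0\}\to\Lambda^n_0$ is an $\mathcal{L}_S$-trivial cofibration --- does work, but the induction must be stated for an arbitrary nonempty union of facets of a simplex all containing the initial vertex (so that the intersections $B_{k-1}\cap d_k\Delta[n]$ arising in the facet-by-facet pushout are covered by the inductive hypothesis); with that formulation the double induction on dimension and number of facets closes, and the rest of your argument is sound.
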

 
 \begin{remone}
  Essentially we proved that the two Quillen equivalences that Joyal and Tierney introduced remain an equivalence
  after we localize both sides. Theoretically, we could have just proven these theorems using the fact that
  localizing with respect to the "same" maps on both sides preserves Quillen equivalences. However, the issue is that we didn't have
  a good enough understanding of the localization of the Joyal model structure 
  (i.e. it is not clear which maps we are localizing with respect to).
  It is just after this proof that we get a clear sense of the localizing maps.
 \end{remone}

 \bibliographystyle{alpha}
 \bibliography{main}
 
\end{document}